\documentclass[12pt]{article}
\usepackage[usenames]{color}
\usepackage{amsmath,amssymb,amsthm}
\usepackage[all]{xy}
\usepackage[normalem]{ulem}
\xyoption{arc}

\numberwithin{equation}{section}

\bibliographystyle{amsalpha}
\parskip 0.7pc
\parindent 0pt

\newtheorem{thm}{Theorem}[section]
\newtheorem{cor}[thm]{Corollary}
\newtheorem{prop}[thm]{Proposition}
\newtheorem{lem}[thm]{Lemma}
\theoremstyle{definition}
\newtheorem{defn}[thm]{Definition}
\theoremstyle{remark}
\newtheorem{rmk}[thm]{Remark}

\def\co{\colon\thinspace}
\newcommand{\mb}[1]{\mathbb{#1}}
\newcommand{\mf}[1]{\mathfrak{#1}}

\newcommand{\Ext}{\ensuremath{{\rm Ext}}}

\newcommand{\Hom}{\ensuremath{{\rm Hom}}}

\newcommand{\holim}{\ensuremath{\mathop{\rm holim}}}
\newcommand{\coker}{\ensuremath{\mathop{\rm coker}}}
\newcommand{\overto}{\mathop\rightarrow}

\newcommand{\Map}{\ensuremath{{\rm Map}}}

\newcommand{\TAF}{{\rm TAF}}

\newcommand{\hAut}{{\rm hAut}}

\newcommand{\Spec}{{\rm Spec}}

\newcommand{\Spf}{{\rm Spf}}
\newcommand{\tmf}{{\rm tmf}}

\newcommand{\TMF}{{\rm TMF}}

\newcommand{\Tr}{{\rm Tr}}

\newcommand{\psth}{\psi\mbox{-}\theta}
\newcommand{\alg}{\mbox{-}alg}

\newcommand{\BPP}[1]{{\rm BP}{\left\langle#1\right\rangle}}

\newcommand{\eilm}[1]{\ensuremath{{\mb H} #1}}
\newcommand{\smsh}[1]{\ensuremath{\mathop{\wedge}_{#1}}}

\newcommand{\comp}[1]{\ensuremath{#1^\wedge}}

\newcommand{\pow}[1]{[\![{#1}]\!]}
\newcommand{\laur}[1]{\left(\!\left({#1}\right)\!\right)}

\title{
Commutativity conditions for truncated Brown-Peterson
spectra of height $2$}
\author{Tyler Lawson\thanks{Partially supported by NSF
    grant 0805833 and a fellowship from the Sloan foundation.}, Niko Naumann}

\begin{document}
\maketitle
\begin{abstract}
An algebraic criterion, in terms of closure under power operations, is
determined for the existence and uniqueness of generalized
truncated Brown-Peterson spectra of height $2$ as $E_\infty$-ring
spectra.  The criterion is checked for an example at the prime
$2$ derived from the universal elliptic curve equipped with a level
$\Gamma_1(3)$ structure.

{\sf 2000MSC: 55P42, 55P43, 55N22 and 14L05}
\end{abstract}

\tableofcontents

\section{Introduction}

The truncated Brown-Peterson spectra $\BPP{n}$ associated to a prime
$p$ and an integer $n$, introduced in
\cite{johnson-wilson-projective}, are connective versions of the
Landweber exact spectra $E(n)$ developed by Johnson and Wilson.  The
coefficient ring $\pi_*\BPP{n}=\mb Z_{(p)}[v_1,\ldots,v_n]$
parameterizes a simple family of formal group laws concentrated at
chromatic heights $0$ through $n$ and at height $\infty$.

These spectra $\BPP{n}$ have enjoyed recent prominence for the role
they play in algebraic $K$-theory.  Ausoni and Rognes
\cite[Introduction (0.2)]{ausoni-rognes} expect the existence of
algebraic $K$-theory localization sequences,
\begin{equation}\label{eq:Rognes}
K(\comp{\BPP{n-1}}_p) \to K(\comp{\BPP{n}}_p) \to K(\comp{E(n)}_p),
\end{equation}
as part of a program to understand algebraic $K$-theory by
applying it to the chromatic tower.  For this and related
applications it is useful to know whether these spectra admit highly
associative ($A_\infty$-) or highly commutative ($E_\infty$-) ring
structures.  They are all known to admit $A_\infty$-ring structures
\cite[discussion following Remark 2.12]{lazarev-towers}.

Only the simplest examples are known to admit $E_\infty$-ring
structures: the spectra $\BPP{-1} = \eilm{\mb Z/p}$, $\BPP{0} =
\eilm{\mb Z_{(p)}}$, and the spectra $\BPP{1}$
\cite{mcclure-staffeldt}, which are the Adams summands of connective
$K$-theory. We remark that in exactly these cases, the existence of a
localization sequence (\ref{eq:Rognes}) is known.  For $n=0$ this is
classical, due to Quillen, and for $n=1$ it is work of Blumberg
  and Mandell \cite[Introduction]{blumbergmandell}.

In this paper we construct a generalized $\BPP{2}$ as an
$E_\infty$-ring spectrum at the prime $2$, as follows.

\begin{thm}
\label{thm:main}
There exists a $2$-local complex oriented $E_\infty$-ring spectrum
$\tmf_1(3)_{(2)}$ such that the composite map of graded rings
\[
\mb Z_{(2)}[v_1, v_2] \subseteq BP_* \to (MU_{(2)})_* \to
\pi_*(\tmf_1(3)_{(2)}) 
\]
is an isomorphism. Here the $v_i$ denote the $2$-primary Hazewinkel
generators.  Any other such spectrum with an isomorphic formal group
law is equivalent to $\tmf_1(3)_{(2)}$ as an $E_\infty$-ring spectrum.
\end{thm}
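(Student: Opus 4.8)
The plan is to construct $\tmf_1(3)_{(2)}$ by derived algebraic geometry, to read off its homotopy and formal group from a moduli stack, and then to obtain the uniqueness clause from obstruction theory governed by power operations. The relevant moduli problem classifies Weierstrass curves $y^2+a_1xy+a_3y=x^3$ together with the origin $(0,0)$, which on such a curve is automatically a point of exact order $3$; the only coordinate changes preserving this datum are $(x,y)\mapsto(u^2x,u^3y)$, rescaling $a_1$ and $a_3$ with weights $1$ and $3$, so the moduli stack is $\mathcal{M}_1(3)=[\Spec\mb{Z}[a_1,a_3]/\mb{G}_m]$ and its elliptic (smooth) locus $\mathcal{M}_1(3)^{\mathrm{sm}}$ is the complement of the discriminant $\Delta=a_3^3(a_1^3-27a_3)$. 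Since $3$ becomes invertible after localizing at $2$, the forgetful map $\mathcal{M}_1(3)^{\mathrm{sm}}_{(2)}\to(\mathcal{M}_{\mathrm{ell}})_{(2)}$ is finite \'etale, so the Goerss--Hopkins--Miller--Lurie construction of a sheaf $\mathcal{O}^{\mathrm{top}}$ of $E_\infty$-rings applies and its global sections form an $E_\infty$-ring $\TMF_1(3)_{(2)}$; the descent spectral sequence collapses because higher coherent cohomology on this (affine-modulo-$\mb{G}_m$) stack vanishes, giving $\pi_*\TMF_1(3)_{(2)}\cong\mb{Z}_{(2)}[a_1,a_3][\Delta^{-1}]$ in even degrees. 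This theory is Landweber exact: modulo $2$, with $\Delta$ inverted, the universal curve has formal group of height $1$ at ordinary points and height $2$ at the supersingular locus $a_1=0$, so $v_1$ is a nonzerodivisor modulo $2$, $v_2$ is a unit modulo $(2,v_1)$, and the quotient modulo $(2,v_1,v_2)$ vanishes. In particular $\TMF_1(3)_{(2)}$ is complex orientable by the Weierstrass formal group, and tracing the classifying map shows the Hazewinkel generators $v_1,v_2$ go to $a_1,a_3$ modulo units and decomposables (the Hasse invariant of the curve being $a_1\bmod 2$, and the supersingular locus $a_1=0$ carrying $v_2=a_3$ up to a unit).

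\textbf{The connective model and the existence clause.} To reach the connective range I compactify: by the standard cusp-gluing construction one extends $\mathcal{O}^{\mathrm{top}}$ over the cusps of $\overline{\mathcal{M}}_1(3)$ using the Tate curve over $\mb{Z}_{(2)}\laur{q}$, obtaining an $E_\infty$-ring $\mathrm{Tmf}_1(3)_{(2)}$ with $\pi_{2k}\mathrm{Tmf}_1(3)_{(2)}=H^0(\overline{\mathcal{M}}_1(3)_{(2)};\omega^{\otimes k})$ for $k\ge 0$ and only $H^1$-contributions in sufficiently negative degrees; here $\overline{\mathcal{M}}_1(3)_{(2)}$ is the weighted projective line $\mb{P}(1,3)$, a smooth proper $1$-dimensional stack whose only higher cohomology is an $H^1$ that vanishes on $\omega^{\otimes k}$ for $k\ge-3$, while $H^0(\omega^{\otimes k})$ is the weight-$k$ part of $\mb{Z}_{(2)}[a_1,a_3]$. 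Since both the smooth locus and the Tate curve are Landweber exact, $\mathrm{Tmf}_1(3)_{(2)}$ is complex orientable, and $\tmf_1(3)_{(2)}:=\tau_{\ge 0}\mathrm{Tmf}_1(3)_{(2)}$ --- which is again an $E_\infty$-ring and inherits the orientation, the map from $\MU$ factoring through the connective cover and realizing the formal group of the universal $\Gamma_1(3)$-curve --- has $\pi_*\tmf_1(3)_{(2)}=\mb{Z}_{(2)}[a_1,a_3]$. Combining with the previous paragraph, the composite $\mb{Z}_{(2)}[v_1,v_2]\subseteq BP_*\to(\MU_{(2)})_*\to\pi_*\tmf_1(3)_{(2)}$ carries $v_1,v_2$ to a degree-filtered, hence triangular and invertible, change of the polynomial generators $a_1,a_3$, so it is an isomorphism; this proves the existence clause.

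\textbf{Uniqueness, and the main obstacle.} Let $R$ be another $2$-local complex-oriented $E_\infty$-ring with $\pi_*R\cong\mb{Z}_{(2)}[v_1,v_2]$ realized by the Hazewinkel generators and with formal group law isomorphic to that of $\tmf_1(3)_{(2)}$. The plan is to compare $R$ and $\tmf_1(3)_{(2)}$ by Goerss--Hopkins obstruction theory: the moduli space of $E_\infty$-ring spectra realizing the algebraic datum --- the graded ring $\mb{Z}_{(2)}[v_1,v_2]$, its formal group, and the action of power operations --- has successive obstruction groups lying in (topological) Andr\'e--Quillen cohomology, with obstructions to the existence of a realization in cohomological degree $\ge 2$ and to the uniqueness of an equivalence in degree $\ge 1$, and one must show these vanish. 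I expect the principal difficulty to lie exactly here: after isolating the correct category of algebraic data, one has to \emph{compute} that the quotient $(\MU_{(2)})_*\twoheadrightarrow\mb{Z}_{(2)}[a_1,a_3]$ determined by the universal $\Gamma_1(3)$-curve has kernel closed under the total power operation, and that the induced power-operation structure on $\mb{Z}_{(2)}[a_1,a_3]$ is rigid, admitting no exotic deformations --- for which the efficient route is to describe the power operations on elliptic cohomology through isogenies of the universal curve in the style of Ando--Hopkins--Rezk and to verify compatibility with the $\Gamma_1(3)$-level structure. This closure-and-rigidity statement is the algebraic heart of the matter (and is reflected in the criterion announced in the abstract); granting it, the obstruction theory forces $R\simeq\tmf_1(3)_{(2)}$ as $E_\infty$-ring spectra, which completes the proof.
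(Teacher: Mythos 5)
Your existence argument takes a genuinely different route from the paper. The paper builds $\tmf_1(3)_{(2)}$ from the bottom up through a chromatic fracture cube: a $K(2)$-local Lubin--Tate fixed-point object, a $K(1)$-local piece constructed by Goerss--Hopkins obstruction theory with a chromatic attaching map, a $K(1)\vee K(2)$-pullback, an arithmetic gluing against a free rational $E_\infty$-algebra, and a connective cover. You instead invoke the derived-algebraic-geometry construction of $\mathcal{O}^{\mathrm{top}}$ on $\mathcal{M}_1(3)^{\mathrm{sm}}$ pulled back along the \'etale forgetful map, compactify over the cusps, and take a connective cover. That route is standard for producing $\tmf_1(3)$ and is sound in outline, though two points need care: the claim that $\mathrm{Tmf}_1(3)_{(2)}$ (before truncation) is complex orientable is doubtful, since Serre duality on $\mathbb{P}(1,3)$ puts odd-degree classes into its negative homotopy; you should instead argue orientability directly for the connective cover, whose homotopy is even and torsion-free. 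And the identification of $v_1,v_2$ with $a_1,a_3$ ultimately rests on the same computation the paper cites from Laures.

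The uniqueness clause, however, has a genuine gap, and it is exactly the gap the paper's Sections~\ref{sec:k2-localization}--\ref{sec:k0-localization} are designed to fill. You propose to run Goerss--Hopkins obstruction theory for the moduli of $E_\infty$-realizations of the datum $(\mathbb{Z}_{(2)}[v_1,v_2],\mathbb{G},\text{power ops})$, but that obstruction theory is formulated in a $K(n)$-local (Adams-resolution) setting; for the connective spectrum $\tmf_1(3)_{(2)}$ one cannot apply it directly and must instead prove uniqueness separately at each chromatic layer and glue. The paper does this precisely: Theorem~\ref{thm:k2local-exists} for the $K(2)$-local piece (via Goerss--Hopkins--Miller together with Baker--Richter and Rognes' Galois theory), Theorem~\ref{thm:realizability} for the $K(1)$-local piece and its attaching map, Proposition~\ref{prop:12homotopy} and Theorem~\ref{thm:newmain} for the pullbacks. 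Beyond that structural issue, you explicitly defer the computational input --- that the degree-zero subring $(A_{K(1)})_0$ of $(A_{K(2),K(1)})_0$ is $\theta$-stable --- by saying ``granting it, the obstruction theory forces $R\simeq\tmf_1(3)_{(2)}$.'' That closure is the actual content of Section~\ref{sec:proofofmain}: the paper converts $\theta$ to an algebraic lift of Frobenius (Proposition~\ref{prop:algcompoftheta}), imports Rezk's explicit degree-$2$ isogeny formula (Proposition~\ref{prop:theta}), solves the resulting cubic $\alpha^3 - t\alpha - 2 = 0$ Hensel-style in $\widehat{\mathbb{Z}_2[t^{-3}]}_2$, and checks $(\psi^2)^{\mathrm{alg}}(t^{-3})$ lands back in that ring. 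Without that verification, and without the chromatic assembly that turns it into uniqueness of the connective spectrum, the uniqueness clause is not established.
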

In Section~\ref{sec:realization} we will discuss realization
problems, and in particular how the existence of the above isomorphism
is intrinsic to the formal group.

In forthcoming work \cite{tmforientation}, the authors will show that
there is a commutative diagram of $E_\infty$-ring spectra
\[
\xymatrix{
\tmf_{(2)} \ar[r]\ar[d] & ko_{(2)}\ar[d] \\
\tmf_1(3)_{(2)} \ar[r]^(.6){\alpha} & ku_{(2)}
}
\]
which, in mod $2$ cohomology, induces the following canonical diagram
of modules over the mod $2$ Steenrod algebra ${\cal A}$:
\[
\xymatrix{
{\cal A}/\!/{\cal A}(2) & 
{\cal A}/\!/{\cal A}(1)\ar[l]\\
{\cal A}/\!/E(2)\ar[u]& 
{\cal A}/\!/E(1)\ar[l]\ar[u]
}
\]
On homotopy groups, $\alpha$ induces the map sending $v_1$ to $v_1$
and $v_2$ to zero, and there is a consequent cofiber sequence of
$\tmf_1(3)_{(2)}$-modules
\[
\Sigma^6 \tmf_1(3)_{(2)}\stackrel{\cdot v_2}{\longrightarrow} \tmf_1(3)_{(2)}
\stackrel{\alpha}{\longrightarrow} ku_{(2)}.
\]
The authors are grateful to Rognes for pointing out that this result
might be useful in on-going joint work of Bruner and Rognes aimed at
computing the algebraic $K$-groups $\pi_* K(\tmf_{(2)})$.

In general, the possibility of constructing a truncated Brown-Peterson
spectrum $\BPP{n}$ as an $E_\infty$-ring spectrum can be determined by
iterative application of obstruction theory through a chromatic
fracture cube.  The necessary methods to carry this out are already
well-understood, and were significantly applied in the construction of
the spectrum of topological modular forms by Goerss, Hopkins, and
Miller \cite{goerss-hopkins,behrens-construct}.  These methods, when
applied to $\BPP{2}$, break down into the following stages:
\begin{itemize}
\item The Goerss-Hopkins-Miller theorem produces the
  $K(2)$-localized $E_\infty$-ring spectrum $L_{K(2)} \BPP{2}$ as the
  homotopy fixed point object of a finite group action on a
  Lubin-Tate spectrum.
\item The homotopy groups of the $K(1)$-localization $L_{K(1)}
  L_{K(2)} \BPP{2}$ are equipped with a power operation $\theta$.
  These homotopy groups contain a subring consisting of the homotopy
  groups of $L_{K(1)} \BPP{2}$.  If this is a $\theta$-invariant
  subring, one produces an $E_\infty$-ring spectrum $L_{K(1)} \BPP{2}$
  equipped with a chromatic attaching map by use of Goerss-Hopkins
  obstruction theory \cite{goerss-hopkins}.  (If this is {\em not} a
  $\theta$-invariant subring, then $\BPP{2}$ does not admit an
  $E_\infty$-ring structure.)
\item The $E_\infty$-ring spectrum $L_{K(1) \vee K(2)} \BPP{2}$ is
  then constructed by a chromatic pullback square.
\item The homotopy groups of $L_{K(0)} L_{K(1) \vee K(2)} \BPP{2}$
  contain as a subring the homotopy groups of $L_{K(0)} \BPP{2}$.
  The latter is a free algebra on two generators, and this allows $L_{K(0)}
  \BPP{2}$ to be realized as an $E_\infty$-ring spectrum, along with
  an arithmetic attaching map, by methods of rational homotopy theory.
\item The object $L_{K(0) \vee K(1) \vee K(2)} \BPP{2}$ is then
  constructed by a chromatic pullback square.
\item $\BPP{2}$ is the connective cover of $L_{K(0) \vee K(1) \vee
    K(2)} \BPP{2}$, and canonically inherits an $E_\infty$-ring
  structure from $L_{K(0) \vee K(1) \vee K(2)} \BPP{2}$ by \cite[VII, 4.3]{ekmm}.
\end{itemize}

More generally, these steps can be followed without starting with
$\BPP{2}$.  Instead, one starts with a graded ring carrying a formal
group law with similar chromatic data (but possibly determined as a
quotient by a different regular sequence in $MU_*$).  We will
formalize this with the notion of a generalized $\BPP{n}$- realization
problem in Definition \ref{defn:realization}.  If this realization
  problem can be solved, the solution is a {\em generalized}
truncated Brown-Peterson spectrum as considered previously, such as in
\cite{strickland-products, baker-johnsonwilson}.

The main result of this paper is Theorem~\ref{thm:newmain}.  It
shows that a formal group law over the graded ring $\mb
Z_{(p)}[v_1,v_2]$, chromatically similar to that for $\BPP{2}$, can be
realized by an $E_\infty$-ring spectrum $R$ (which is essentially
unique) if and only if the ring $\comp{\mb Z[(v_2/v_1^{p+1})]}_p$ is
stable under an induced power operation $\theta$.  Moreover, any
  solution is unique up to weak equivalence. (This ring is a
subring of the homotopy of a corresponding $K(1)$-localized Lubin-Tate
spectrum). Work initiated by Ando \cite{ando-isogenies,
ando-hopkins-strickland} allows an algebro-geometric interpretation
of $\theta$ in terms of the associated formal group laws and descent
data for level structures. It is perhaps surprising that the only
problematic part of this general realization program is purely
algebraic.

The proof of Theorem \ref{thm:newmain} occupies the bulk of the paper.
By work of Rezk \cite{rezk-height2}, at the prime $2$ the power
operations on the ring of modular forms of level $\Gamma_1(3)$ turn
out to provide precisely the necessary algebraic descent data for
level structures, and Theorem~\ref{thm:main} follows.  There are
currently few instances where explicit computations of power
operations on Lubin-Tate spectra of chromatic height $2$ are known,
and this is one of the main obstructions to extending Theorem
\ref{thm:main} to primes $p\neq 2$ or to other generalized
$\BPP{2}$.

We remark that methods involving modular curves cannot extend to
construct generalized versions of $\BPP{2}$ at arbitrary primes.
There are only finitely many modular curves having an underlying
coarse moduli space which is a curve of genus zero with exactly one
supersingular point, as demanded by the graded ring $\BPP{2}_*$
together with its formal group.  For the prime $p=3$ a similar method
based on (derived) compact Shimura curves is shown to work in
\cite[Theorem 4.2]{hill-lawson}.  On the other hand, Rezk's explicit
formula in Proposition~\ref{prop:theta} is strictly more data than
necessary.  The final result does not require elliptic curves at all,
but only a height $2$ formal group law $\mb G$ over $\mb Z_p\pow a$
and a small subring of $\comp{\mb Z\laur a}_p$ invariant under a lift
of Frobenius which is canonically attached to $\mb G$.  Using
crystalline Diedonn\'e theory, this existence statement translates
into one concerning a rank $2$ $F$-crystal over $\mb Z_p\pow{a}$
with specific rationality properties.  Though rather explicit, this
problem remains open at present for all primes $p\neq 2,3$.

The current work is connected with the multiplicative ring spectrum
project of Goerss and Hopkins \cite{goerssnewton} which attempts to
lift algebraic diagrams to diagrams of $E_\infty$-ring spectra. The
strongest currently available results in this direction are due to
Lurie \cite[Theorem 4.7]{Pauloverview}, and roughly produce the
required $E_\infty$-ring spectra from suitable $p$-divisible groups.
These in turn can be produced from families of elliptic curves
(leading to $\TMF$ \cite{lurie-survey}) and, more generally, from
suitable abelian varieties (leading to $\TAF$ \cite{taf}).

However, there are many more $p$-divisible groups than there are
abelian varieties.  This flexibility allowed specifically constructed
$p$-divisible groups to realize the localizations
$L_{K(2)\vee\cdots\vee K(n)} E(n)$ of generalized Johnson-Wilson
spectra as $E_\infty$-ring spectra \cite[Example 5.4]{tylerdisplays}
Given appropriate algebraic data, the spectrum
$L_{K(1)} E(n)$ and its chromatic attaching map are directly
realizable by the Goerss-Hopkins obstruction theory, and the question
of whether an $E_\infty$-ring structure can be placed on the spectrum
$E(n)$ is reduced to a very explicit, though potentially hard,
arithmetic existence problem.

We conclude this section by giving an overview of the paper.

In Section \ref{sec:power-operations} we recall fundamental results
about power operations with an emphasis on the $K(1)$-local case.

Section \ref{sec:realization} sets up the realization problems for
generalized $\BPP{n}$ and introduces some technical material: for
understanding how complex orientations can be lifted along a map of
ring spectra, and for rigidifying certain group actions.

Section \ref{sec:k2-localization} shows that, as a consequence of the
Goerss-Hopkins-Miller theorem and realizability results of Baker and
Richter, the realization problem for $\BPP{2}$ always has an
essentially unique solution $K(2)$-locally, given by a homotopy fixed
point spectrum of a Lubin-Tate spectrum.

Section \ref{sec:k1-local-obstruction}, studying
$K(1)$-local obstruction theory, is the technical heart of
the paper.  It leads up to the proof of the main $K(1)$-local
existence and uniqueness result, Theorem \ref{thm:realizability}.

Section \ref{sec:k1-localization} deduces that closure under the
power operation $\theta$ is necessary and sufficient to extend a
$K(2)$-local solution to a given existence problem to a $K(1) \vee
K(2)$-local solution, and that any such extension is unique up to
equivalence.

Section \ref{sec:k0-localization} completes the proof characterizing
existence and proving uniqueness for realizing a generalized
$\BPP{2}$, Theorem~\ref{thm:newmain}, by methods of rational homotopy
theory.

Section \ref{sec:proofofmain} provides a purely algebraic description
of the power operation $\theta$ and uses elliptic curves with
$\Gamma_1(3)$-level structures to complete the proof of Theorem
\ref{thm:main}.

Throughout this paper, we follow the standard conventions that, for a
spectrum $E$, we write $\pi_* E = E_* = E^{-*}$.

\subsection*{Acknowledgements}

The authors would like to thank Andrew Baker, Daniel Davis, and John
Rognes for discussions of some of the present material, and the
anonymous referees for a number of observant remarks and the
suggestion to study uniqueness.

\section{Power operations}
\label{sec:power-operations}

Let $p$ be a prime and $E$ an $E_\infty$-ring spectrum which is
$p$-local and complex orientable.  In this section we review the
necessary details from \cite{ando-hopkins-strickland} about power
operations on $E$-cohomology. The authors have been heavily influenced
by the work of Rezk \cite{rezk-power-operations}.

\subsection{Total power operations}
For any element $\alpha \in E^0(X) = [\Sigma^\infty_+ X, E]$, composing
the $p$-fold symmetric power of this map with the multiplication map
of $E$ gives rise to a power operation
\[
P(\alpha)\co \Sigma^\infty_+ (B\Sigma_p \times X) \to E.
\]
The transformation $P\co E^0(X) \to E^0(X \times B\Sigma_p)$ is
natural in spaces $X$ and $E_\infty$-ring spectra $E$.  It preserves
multiplication but not addition (there is a Cartan formula),
and composing $P(\alpha)$ with the inclusion $X \to B \Sigma_p \times
X$ gives $\alpha^p$.

Let $I_X$ be the ideal which is the image of the transfer $\Tr\co E^0(X)
\to E^0(X \times B\Sigma_p)$; then $I_X$ is the principal ideal
generated by $\Tr(1) \in E^0(B \Sigma_p)$.  As $E$ is $p$-local, this
ideal coincides with the image of the transfer from $E^0(X \times
B(\Sigma_r \times \Sigma_s))$ associated to any subgroup $\Sigma_r
\times \Sigma_s$ for $0 < r,s < p$, $r+s = p$.  The resulting natural
transformation
\[
\xymatrix{ \psi^p_X\co E^0(X) \ar[r]^P & E^0(X\times {B\Sigma_p})\ar@{>>}[r] & E^0(X \times {B\Sigma_p})/I_X}\\
\]
is a ring homomorphism.  We simply write $\psi^p = \psi^p_X$ when $X$
is a point.

The image of $\Tr(1)$ under the map $E^0(B\Sigma_p) \to E^0(*)$ is
$p!$, and hence we obtain a factorization as follows:
\[
\xymatrix{
E^0(X) \ar[r]^{\psi^p\hskip 2.5pc} \ar[dr]_{(-)^p} &
E^0(X \times B\Sigma_p)/I_X \ar[d] \\
& E^0(X)/(p)
}
\]

The ring homomorphism $\psi^p$ therefore reduces, modulo the
ideal $(p)$, to the Frobenius ring homomorphism $x \mapsto x^p$
on $E^0(X)/(p)$.

Let $j\co C_p \hookrightarrow \Sigma_p$ be a monomorphism from a
cyclic subgroup of order $p$.  The index $[\Sigma_p:C_p]$ is prime to
$p$, so the image of $I_X$ under the restriction map $E^0(X \times
B\Sigma_p) \to E^0(X \times BC_p)$ coincides with the image of the
transfer morphism $E^0(X) \to E^0(X \times BC_p)$.

Consider the case $X = BC_p$.  Write $\epsilon\co E^0(BC_p) \to E^0$
for the map induced by the homomorphism $\{e\} \to C_p$ from the
trivial group.  For any $\alpha \in E^0(BC_p)$, we have the
following diagram, which commutes up to homotopy:
\[
\xymatrix{
\Sigma^\infty_+BC_p \ar[d]_{(1 \times j) \circ \Delta} \ar[r]^j &
\Sigma^\infty_+B(\{e\} \wr \Sigma_p) \ar[d] \ar[r]^{\hskip 1pc{\epsilon(\alpha)}^p} &
E^{\smsh{} p}_{h\Sigma_p} \ar@{=}[d] \ar[r] &
E \ar@{=}[d]
\\
\Sigma^\infty_+(BC_p \times B\Sigma_p) \ar[r]_{\Delta} &
\Sigma^\infty_+B(C_p \wr \Sigma_p) \ar[r]_{\hskip 1pc \alpha^p} &
E^{\smsh{} p}_{h\Sigma_p} \ar[r] &
E
}
\]
(Here the wreath product $G \wr \Sigma_p$ denotes the semidirect
product $G^p \rtimes \Sigma_p$, with the symmetric group acting by
permutation.)
The left-hand square commutes up to homotopy because the two composite
maps are induced by two conjugate group homomorphisms $C_p \to C_p \wr
\Sigma_p$.  As a consequence, we have the following commutative
diagram of total power operations:
\begin{equation}
  \label{eq:totalpowers}
\xymatrix{
E^0(BC_p) \ar[r]^{P{\hskip 2pc}} \ar[d]_{\epsilon} &
E^0(BC_p \times B\Sigma_p) \ar[d]^{\Delta^* \circ (1 \times j)^*} \\
E^0 \ar[r]_{j^* \circ P} &
E^0(BC_p)
}
\end{equation}

\subsection{Even-periodic objects}

In this section we assume that our fixed $p$-local, complex orientable
$E_\infty$-ring spectrum $E$ is even-periodic, i.e. that for all
integers $m$, $E^{2m+1}$ is zero and there is a unit in $E^2$.

Under these circumstances, the function spectrum $E^X =
F(\Sigma^\infty_+X,E)$ is an $E_\infty$-ring spectrum which inherits a
complex orientation from $E$.  The ring $E^0(X \times
\mb{CP}^\infty)$ is the coordinate ring of a formal group $\mb G$ over
$E^0(X)$ \cite[Section 8]{stricklandformal}.  Naturality of the map
$\psi^p_{(-)}$ with respect to the coordinate projections and
multiplication on $\mb{CP}^\infty \times \mb{CP}^\infty$ implies that
the map
\[
\psi^p_{\mb{CP}^\infty}\co E^0(\mb{CP}^\infty) \to E^0(\mb{CP}^\infty
\times B\Sigma_p)/I_{\mb{CP}^\infty}
\]
is a homomorphism $\mb G \to (\psi^p)^* \mb G$ of formal groups which
is a lift of the Frobenius isogeny.  More is true.  When the formal
group has constant height, the ring $E^0(B\Sigma_p)/I$ is the
universal $E^0$-algebra classifying subgroup schemes of $\mb G$ of
rank $p$, and consideration of equation~(\ref{eq:totalpowers})
implies that this universal subgroup scheme is the kernel of the
isogeny $\psi^p_{\mb{CP}^\infty}$.

Let $C_p \subseteq S^1$ be the unique cyclic subgroup of order $p$.
It induces a map $BC_p\to BS^1\cong \mb{CP}^\infty$.  If $p$ is not a
zero divisor in $E^*$, the map $E^0(\mb{CP}^\infty) \to E^0(BC_p)$ is
a quotient map, and $E^0(BC_p)$ is the coordinate ring of the
$p$-torsion subgroup of the formal group.

\subsection{Homogeneous objects}
\label{sec:homogeneous}

In this section we assume that $E$ is even-periodic, that the ring
$E^0$ is a $p$-torsion-free complete local ring with maximal ideal
$\mf m$ containing $p$, and that the mod-$\mf m$ reduction of the
formal group on $E^0$ is of constant height $n < \infty$.

Under these circumstances, the power operations on $E$-cohomology give
rise to the following result.
To formulate it, we will write $f^*\mb G$ for the image of a formal
group $\mb G$ along a ring homomorphism $f$.  

\begin{thm}[{\cite[Theorem 3.25]{ando-hopkins-strickland}}]
\label{thm:level-structures}
The ring $E^0$ with formal group $\mb G$ has descent data for level
structures, as follows.  The pair
\[
(E^0({B\Sigma_p})/I, \Spec(E^0(BC_p))\subseteq
\psi^*\mb{G})
\]
is the universal example of a pair $(S,H)$ consisting of an
$E^0$-algebra $f\co E^0\to S$ together with a subgroup $H\subseteq
f^*\mb{G}$ of rank $p$. Given such a pair $(S,H)$, one obtains a
classifying map of $E^0$-algebras $F:E^0({B\Sigma_p})/I\to S$.  The
resulting ring homomorphism $f^{(p)} = F\circ \psi\co E^0\to S$
comes with an isogeny $\psi^p_{\mb{CP}^\infty}\co f^* \mb G \to
(f^{(p)})^* \mb G$ with kernel $H$.  These fit into a commutative
diagram as follows:
\[
\xymatrix{
& E^0 \ar[r]^{f^{(p)}} \ar[ld]_f &
  S \ar[d]\\
  S \ar[r] & S/(p) \ar[r]^{{\rm Frob}} & S/(p)
}
\]
\end{thm}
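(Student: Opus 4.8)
The plan is to obtain the statement by assembling facts already recorded in this section, with Strickland's computation of the $E$-cohomology of $B\Sigma_p$ as the one substantial input. We are given that, under the standing hypotheses---and using crucially that $\mb G$ has constant height $n<\infty$---the ring $D:=E^0(B\Sigma_p)/I$ is the universal $E^0$-algebra carrying a rank-$p$ subgroup scheme of $\mb G$, that this universal subgroup is the kernel of the Frobenius-lifting isogeny $\psi^p_{\mb{CP}^\infty}\colon\mb G\to(\psi^p)^*\mb G$ over $D$, that $E^0(BC_p)$ is the coordinate ring of the $p$-torsion $\mb G[p]$, and that $\psi^p\colon E^0\to D$ reduces modulo $(p)$ to the Frobenius. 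The theorem repackages these into a universal property with an explicit description of the universal subgroup, an explicit target for the isogeny, and a commuting square recording the reduction modulo $p$. Granting Strickland's theorem, the remaining work is essentially bookkeeping, and the one point needing care is the description of the universal subgroup in terms of $BC_p$.

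For that description, fix a monomorphism $j\colon C_p\hookrightarrow\Sigma_p$. Since $[\Sigma_p:C_p]$ is prime to $p$, the restriction map along $j$ carries the transfer ideal $I$ of $E^0(B\Sigma_p)$ into the image of the transfer $E^0\to E^0(BC_p)$; this furnishes a $D$-algebra structure on a suitable quotient of $E^0(BC_p)$, whose spectrum is a closed subscheme of $\mb G[p]$, and hence of $\mb G$, over $D$. Chasing the commuting square~(\ref{eq:totalpowers}) of total power operations---which relates $P$ on $BC_p$ to $j^*\circ P$ and the augmentation $\epsilon$---identifies this closed subscheme with the kernel of $\psi^p_{\mb{CP}^\infty}$, that is, with the universal rank-$p$ subgroup, and simultaneously exhibits $(\psi^p)^*\mb G$ as the corresponding quotient of $\mb G$; this is the force of writing the universal subgroup as $\Spec(E^0(BC_p))\subseteq\psi^*\mb G$ in the statement. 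The commuting square of the theorem is then immediate: in the universal example $f^{(p)}$ is, after these identifications, the map $\psi^p$, which reduces modulo $(p)$ to $x\mapsto x^p$, so $f^{(p)}$ reduces to the composite of $f$ with the Frobenius of $S/(p)$, Frobenius being natural for ring maps.

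For the functorial statement, given an $E^0$-algebra $f\colon E^0\to S$ together with a rank-$p$ subgroup $H\subseteq f^*\mb G$, the universal property of $D$ furnishes a unique $E^0$-algebra map $F\colon D\to S$ carrying the universal subgroup to $H$; setting $f^{(p)}=F\circ\psi$ and pulling back along $F$ the isogeny $\psi^p_{\mb{CP}^\infty}$, the identification of its kernel with the universal subgroup, and the reduction square---all natural in the variable $E_\infty$-ring $E$, and hence compatible with base change along $F$---yields the isogeny $f^*\mb G\to(f^{(p)})^*\mb G$ with kernel $H$ and the asserted commuting diagram. Uniqueness of $F$ gives the uniqueness clause. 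The only genuine obstacle in all of this is the cited theorem of Strickland on $E^0(B\Sigma_p)$; everything downstream is formal manipulation with power operations, transfers, and the structure of the formal group.
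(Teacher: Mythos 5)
The paper does not prove this theorem; it is cited as \cite[Theorem 3.25]{ando-hopkins-strickland}, followed only by a remark that the perfect-residue-field hypothesis of that reference is unnecessary because the proof is ``a formal consequence of calculations in $E$-cohomology.'' What the paper does supply, in the discussion immediately preceding the statement, is exactly the list of ingredients you are assembling: that $E^0(B\Sigma_p)/I$ is (by Strickland's theorem) the universal $E^0$-algebra classifying rank-$p$ subgroups of $\mb G$, that diagram~(\ref{eq:totalpowers}) identifies the universal subgroup with $\ker\psi^p_{\mb{CP}^\infty}$, that $E^0(BC_p)$ is the coordinate ring of $\mb G[p]$ when $p$ is not a zero divisor, and that $\psi^p$ reduces to Frobenius mod $p$. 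Your proposal correctly packages these into a proof, and your identification of Strickland's theorem as the single substantial input matches the paper's remark.

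Two minor cautions. First, you appeal to ``naturality in the variable $E_\infty$-ring $E$'' to justify compatibility with base change along the classifying map $F\colon E^0(B\Sigma_p)/I\to S$. That is not the relevant naturality: $F$ is merely a map of $E^0$-algebras, not one induced by a map of $E_\infty$-ring spectra. What is actually needed is only that forming kernels of isogenies of formal groups commutes with base change of the ground ring, which is elementary and requires no homotopy-theoretic naturality. Second, your phrase ``a suitable quotient of $E^0(BC_p)$'' is doing real work, and is in fact more careful than the theorem's literal $\Spec(E^0(BC_p))$: the coordinate ring of the universal rank-$p$ subgroup over $D = E^0(B\Sigma_p)/I$ is a rank-$p$ quotient of $E^0(BC_p)\otimes_{E^0}D$, not $E^0(BC_p)$ itself as an $E^0$-algebra. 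In a full writeup this is the step one would want to make explicit, since it is where the transfer-ideal bookkeeping for $BC_p$ versus $B\Sigma_p$ actually enters.
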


\begin{rmk}
The reference \cite{ando-hopkins-strickland}, in order to guarantee
the existence of universal deformations, makes the blanket
assumption that $E^0$ is a complete local ring with {\em perfect}
residue field of characteristic $p > 0$.  In the course of this
paper we will have need for Theorem \ref{thm:level-structures}
in the case where the residue field is not perfect; we note that the
proof of the above theorem is a formal consequence of calculations
in $E$-cohomology and does not require a perfect residue field.
\end{rmk}

One particular case is when $E$ is the Lubin-Tate cohomology theory
associated to a formal group law $\mb G_0$ of height $n < \infty$ over
a perfect field $k$ of characteristic $p$
\cite{rezk-notes-on-hopkins-miller, goerss-hopkins}.  In this
  case, we have the following algebraic description of the descent
structure coming from the $E_\infty$-structure.

\begin{prop}[{\cite[Proposition 12.12]{ando-hopkins-strickland}}]
\label{prop:lubin-tate-descent}
If $E$ is a Lubin-Tate cohomology theory, the descent data for level
structures on $E^0$ provided by the (unique) $E_\infty$-structure of
Goerss and Hopkins coincides with the descent data provided by the
universal property of the Lubin-Tate ring.
\end{prop}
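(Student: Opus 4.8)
The plan is to compare the two descent structures one level at a time, reducing each comparison to Theorem~\ref{thm:level-structures} together with the defining universal property of the Lubin--Tate ring. I will describe the argument for subgroups of rank $p$; the statement for subgroups of rank $p^r$ then follows by the same argument applied to the corresponding subgroup-classifying rings, which on both sides are assembled by iterating the rank-$p$ construction through the same universal properties. Write $A=E^0$ for the Lubin--Tate ring, $k$ for its (perfect) residue field, $\mb G$ for the universal deformation of $\mb G_0$ over $A$, and $S=E^0({B\Sigma_p})/I$ with structure map $f\co A\to S$ and universal rank-$p$ subgroup $H\subseteq f^*\mb G$.

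The first step is the purely algebraic observation that $S$ is again a complete local ring with residue field $k$, and that $f$ reduces to the identity on residue fields. By Theorem~\ref{thm:level-structures}, $S$ corepresents the functor of rank-$p$ subgroups of $\mb G$ on $A$-algebras; this functor is representable by a finite $A$-algebra by standard facts about finite subgroup schemes of a one-dimensional formal group of finite height, and its restriction to the closed point corepresents rank-$p$ subgroups of $\mb G_0$ over $k$. A formal group of finite height over a field has a unique subgroup scheme of order $p$, namely the kernel of relative Frobenius, so $S\tens{A}k\cong k$; hence $S$ is complete local with residue field $k$. The quotient isogeny $q\co f^*\mb G\to f^*\mb G/H$ therefore reduces modulo the maximal ideal to relative Frobenius on $\mb G_0$, and in particular $f^*\mb G/H$ is a deformation over $S$ of the Frobenius pullback $\mb G_0^{(p)}=\phi^*\mb G_0$.

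Now I carry out the comparison. On the algebraic side, the rank-$p$ descent datum is by construction the triple consisting of $S$ (as an $A$-algebra with the subgroup $H$), the quotient isogeny $q$, and the map $g\co A\to S$ classifying the deformation $f^*\mb G/H$; the latter exists and is \emph{unique} by the universal property of $A$, applied with the residue-field identification twisted by the Frobenius $\phi$ of $k$ --- exactly the twist recorded by the edge labelled ${\rm Frob}$ in the diagram of Theorem~\ref{thm:level-structures} --- and it comes with a canonical $\star$-isomorphism $f^*\mb G/H\cong g^*\mb G$. On the topological side, Theorem~\ref{thm:level-structures}, which is proved purely by computation in $E$-cohomology, already identifies the pair $(S,H)$ with the pair representing the same subgroup functor, and supplies an isogeny $\psi^p_{\mb{CP}^\infty}\co f^*\mb G\to (f^{(p)})^*\mb G$ whose kernel is $H$. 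Since the cokernel of an isogeny with a prescribed finite kernel is unique up to unique isomorphism, there is a canonical identification $(f^{(p)})^*\mb G\cong f^*\mb G/H\cong g^*\mb G$ compatible with $\psi^p_{\mb{CP}^\infty}$, $q$, and the $\star$-isomorphism above; the uniqueness clause in the universal property of $A$ then forces $f^{(p)}=g$ as ring maps $A\to S$, and the remaining data --- the isogenies and the $\GL$-actions on the tower of level-structure rings --- match because they are all pinned down by these same universal properties.

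The main obstacle is the bookkeeping of the Frobenius twist: $f^*\mb G/H$ is a deformation not of $\mb G_0$ itself but of $\phi^*\mb G_0$, so the universal deformation property must be invoked relative to the twisted residue-field identification, and one must check that this twist agrees precisely with the one built into the topological descent datum. Arranging the conventions on both sides so that the comparison reduces to a bare application of Yoneda to Theorem~\ref{thm:level-structures} and to the universal property of $A$ is the delicate part; everything else is formal. A secondary point, needed to make sense of the word ``deformation'' in the first place, is the uniqueness of the order-$p$ subgroup scheme of $\mb G_0$ --- this is the one place where the finite height of $\mb G_0$ is genuinely used, and it is where the argument breaks down for, e.g., an additive formal group.
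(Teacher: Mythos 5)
The paper does not prove this proposition: it is stated with an explicit citation to \cite[Proposition 12.12]{ando-hopkins-strickland} and treated as a black box (the only local commentary is the adjacent remark that the proof in that reference does not actually require a perfect residue field). So there is no proof in the paper to compare yours against; what you have written is a reconstruction of what the Ando--Hopkins--Strickland argument could look like when filtered through Theorem~\ref{thm:level-structures}.

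Your overall strategy --- show $S = E^0(B\Sigma_p)/I$ is complete local with residue field $k$, observe that both $f^{(p)}$ and the algebraically-defined $g$ classify deformations of the Frobenius-twisted special fibre, use uniqueness of the quotient by $H$ to produce a $\star$-isomorphism between $(f^{(p)})^*\mb G$ and $g^*\mb G$, and invoke uniqueness in the Lubin--Tate property to force $f^{(p)} = g$ --- is sound and likely captures the mathematical content of the cited result. But there is a concrete error in the first step: the claim that $S \otimes_A k \cong k$ is false for $n \geq 2$. By Strickland's theorem, $S$ is finite \emph{free} over $A$ of rank $1 + p + \cdots + p^{n-1}$ (the number of index-$p$ subgroups of $(\mb Z/p)^n$), so $S \otimes_A k$ has that $k$-dimension; uniqueness of the rank-$p$ subgroup of $\mb G_0$ over $\overline k$ shows only that $(S \otimes_A k)_{\mathrm{red}} \cong k$, not that $S \otimes_A k$ is reduced --- the representable functor of subgroups is nontrivial over non-reduced $k$-algebras. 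Fortunately the weaker, correct statement ($S\otimes_A k$ is local Artinian with residue field $k$, hence $S$ is complete local with residue field $k$) is all the rest of your argument needs, so the error is not fatal, but as written the reasoning is invalid and should be repaired before you claim the conclusion.

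One further point deserves a sentence in a complete write-up: when you pass from ``$\phi$ is an isomorphism compatible with the two isogenies'' to ``$\phi$ is a $\star$-isomorphism,'' you are using that both isogenies reduce to relative Frobenius modulo the maximal ideal of $S$, which forces $\phi$ to reduce to the identity of $\mb G_0^{(p)}$. You gesture at this in the last paragraph, but it is precisely the place where the identification of the two Frobenius-twist conventions has to be made explicit, and it is where the comparison genuinely lives rather than a formality.
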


\subsection{Height 1 objects}\label{subsec:heightone}
In this section we discuss the power operations of the previous
section in the height $1$ case.  In this case, any formal group $\mb
G$ of constant height $1$ has a unique subgroup scheme of rank $p$
given by the kernel of multiplication-by-$p$, which we denote by $\mb
G[p] \subseteq \mb{G}$.  The ring $E^0(B\Sigma_p)/I$ classifying
subgroups of rank $p$ is therefore isomorphic to $E^0$, and the power
operation takes the form of a map $\psi^p\co E^0 \to E^0$ which is a
lift of Frobenius.

Conversely, suppose we are given a $p$-adic ring $E$ with formal group
$\mb G$ of constant height $1$, together with a lift of Frobenius
$\psi\co E \to E$.  Write $\overline{\mb G}$ for the mod-$p$
  reduction of $\mb G$ and $\overline{\mb G}^{(p)}$ for the
  pullback of $\overline{\mb G}$ along the Frobenius map.  In this case, rigidity of formal
tori says that the canonical homomorphism
\[
\mathit{Hom}_E(\mb G,\psi^* \mb G)\to \mathit{Hom}_{E/(p)}(\overline{\mb
  G},\overline{\mb G}^{(p)}),
\]
between abelian groups of homomorphisms of formal groups over the
indicated base rings, is a bijection.  This is easier to see on the
dual groups $\mb G^\vee$ and $(\psi^* \mb G)^\vee$, which are \'etale,
by using the fact that the reduction $E\to E/(p)$ induces an
equivalence of \'etale sites. In particular, there is a unique lift of
the relative Frobenius isogeny $\overline{\mb G}\to\overline{\mb
  G}^{(p)}$ over $E/(p)$ to an isogeny $\mb G\to\psi^*\mb G$ over $E$,
necessarily of degree $p$.

Now suppose $F$ is an $E_\infty$-ring spectrum
satisfying the assumptions stated at the beginning of
Section~\ref{sec:homogeneous}, and define
$E = L_{K(1)} F$.  The natural localization map $F \to E$
gives rise to a diagram of total power operations:
\[
\xymatrix{
B:=F^0 \ar[d] \ar[r]^{\psi^p} & F^0(B\Sigma_p)/I \ar[d]\\
\tilde{B}:=E^0 \ar[r]^{\psi^p} & E^0(B\Sigma_p)/I & E^0 \ar[l]_{\hspace{2pc}\cong}
}
\]
Naturality of the description of $\psi^p$ implies that the right-hand
vertical map can be expressed in terms of descent data.  Specifically,
the formal group $\mb{G}_B$ over $B$ determined by the complex
orientability of $F$ has a unique subgroup scheme $C$ of rank $p$
after being pulled back to $\tilde{B}=E^0$, and the
composite map $F^0(B\Sigma_p)/I \to E^0(B\Sigma_p)/I \cong E^0$
classifies this subgroup. In other words, there is an isomorphism
\begin{equation}\label{eq:toplevel}
(\psi^p)^*\mb{G}_{\tilde{B}}\cong\mb{G}_{\tilde{B}}/C
\end{equation}
over  $\tilde{B}=E^0$.

\section{Realization problems for generalized $\BPP{n}$}
\label{sec:realization}

In this section we describe a family of graded rings, together with
formal group laws, that share chromatic features with the formal group
law of $\BPP{n}$.  Our aim is to understand when these can be realized
as the homotopy of complex oriented $E_\infty$-ring spectra.  The
chromatic machinery described in this paper iteratively realizes
various algebras over these rings, and this section also provides some
technical tools for understanding the formal group data of a spectrum
based on knowledge of the formal group data of an algebra over it.

\begin{defn}
\label{defn:realization}
Fix a prime $p$ and an integer $n\ge 0$.
A {\em (generalized) $\BPP{n}$-realization problem} is a graded $\mb
  Z_{(p)}$-algebra $A$, equipped with a map $MU_* \to A$ classifying
a graded formal group law $\mb G$, such that
\begin{itemize}
\item $A$ is concentrated in nonnegative degrees,
\item $A$ is degreewise finitely generated free over $\mb Z_{(p)}$,
\item the sequence $p$, $v_1$, \ldots, $v_n$ in $A$ determined
  by $\mb G$ is a regular sequence, and 
\item the quotient of $A$ by the ideal $(p,v_1,\ldots,v_n)$
is isomorphic to $\mb F_p$ concentrated in
  degree zero.
\end{itemize}
\end{defn}

\begin{rmk}
This definition admits several equivalent formulations. 
One of these is that the composite map
\[
\mb Z_p[v_1,\ldots,v_n] \subseteq BP_* \to (MU_{(p)})_* \to A
\]
is an isomorphism.  Another is that $A$ is a quotient of $BP_*$
by a regular sequence given by lifts of the classes $v_k$ for $k >
  n$. The given definition depends only on the formal
group rather than the formal group law, and does not depend on choices
of lifts of the classes $v_m$, which are only uniquely determined mod
$(p,v_1,\ldots,v_{m-1})$.
\end{rmk}

\begin{defn}
Suppose $(A,\mb G)$ is a (generalized) $\BPP{n}$-realization
problem.  For any graded $A$-algebra $B$, a {\em solution} to this realization
problem {\em for $B$} is a complex oriented $E_\infty$-ring spectrum $T$,
equipped with an isomorphism of graded rings $T_* \to B$ which sends
the formal group of $T$ to the image of the formal group $\mb
G$ over $B$.

Two solutions $T$ and $T'$ are {\em equivalent} if there exists a weak
equivalence $T \to T'$ of $E_\infty$-ring spectra 
which, on homotopy groups, is compatible with the given
identifications of $B$ with $T_*$ and $T'_*$.
\end{defn}

\begin{defn}
Suppose $(A,\mb G)$ is a (generalized) $\BPP{n}$-realization
problem, and recall the consequent isomorphism $A \cong \mb
Z_{(p)}[v_1,\ldots,v_n]$.  For any graded $A$-algebra $B$ and $0
\leq m \leq n$, we define the ``$K(m)$-localization'' of $B$ to
be the $B$-algebra
\[
B_{K(m)} = \comp{(v_m^{-1} B)}_{(p,v_1,\ldots,v_{m-1})}.
\]
When $m=0$ we follow the usual convention that $v_0 = p$.  Here
the completion of a graded ring by a homogeneous ideal denotes
the completion in the category of graded rings.
\end{defn}

\begin{rmk}\label{rmk:ringcalculations}
The following are some cases relevant for the present paper.
Suppose $A$ is the graded ring $\mb Z_{(p)}[v_1,v_2]$ with
$|v_i|=2(p^i-1)$, and let $x = v_1^{p+1}v_2^{-1}$, of degree $0$.
In the following, $R\{ \cdots \}$ denotes a free $R$-module with
indicated generators.
\begin{align*}
A_{K(0)}&=\mb Q[v_1, v_2]&\mbox{ and }
(A_{K(0)})_0&=\mb Q.\\
A_{K(1)}&=\comp{\mb Z_p[x^{-1}]}_p[v_1^{\pm 1}]&\mbox{ and }
(A_{K(1)})_0&=\comp{\mb Z_p[ x^{-1}]}_p.\\
A_{K(2)}&=\mb Z_p\pow{x}[v_2^{\pm 1}]\cdot\{ 1,v_1,\ldots, v_1^p\}\\
&=\mb Z_p\pow{x}[v_2^{\pm 1},v_1]/(v_1^{p+1}-xv_2)&\mbox{ and }
(A_{K(2)})_0&=\mb Z_p\pow{x}.\\
A_{K(2),K(1)}&=\left( \comp{\mb Z_p\laur{x}}_{p}\right)[v_1^{\pm 1}]
\end{align*}

For the last, there are several possible ways to describe the
degree-$0$ part:
\begin{align*}
(A_{K(2),K(1)})_0
&= \comp{\left( \mb Z_p\pow{x}[x^{-1}]\right)}_p\\
&= \comp{\mb Z_p\laur{x}}_p\\
&=\left\{ \sum\limits_{n\in\mb Z} \alpha_nx^n\,\middle\vert\, \alpha_n\in \mb Z_p, \alpha_n\to 0\mbox{ as } n\to - \infty\right\}
\end{align*}
\end{rmk}

\begin{rmk}\label{rem:homotopyoflocalizations}
The interest in this algebraic construction is as follows.
If the generalized $\BPP{n}$-realization problem has a solution $R$,
one can check that
\[
\pi_*(L_{K(n)}R)\cong (\pi_* R)_{K(n)}
\]
as graded algebras over $\pi_*(R)\cong A$. The generalizations to
iterated application of $K(i)$-localization for $0\leq i\leq n$ are
also true. One may prove this result using \cite[Theorem
1.5.4]{hovey-chromaticsplitting} by first showing that $R$ satisfies
the telescope conjecture using \cite[Theorem 1.9]{hovey-vnelements},
or alternatively by using the results of \cite{greenlees_may}, in a
way generalizing the proof of \cite[Lemma
8.1]{behrens-construct}.
\end{rmk}

For the remainder of this paper we concentrate on
generalized $\BPP{2}$-realization problems, and simply refer to them
as {\em realization problems}.

The following two lemmas will prove convenient for lifting formal
group data to a homotopy commutative ring spectrum $R$ from a given
$R$-algebra $S$.

\begin{lem}
\label{lem:fgl-descent}
Suppose $f\co A \to B$ is a faithfully flat map of commutative rings
and $A$ is torsion-free.  If $\mb G$, $\mb G'$ are two formal group
laws over $A$ such that $f^* \mb G$ and $f^* \mb G'$ are strictly
isomorphic over $B$, then $\mb G$ and $\mb G'$ are strictly isomorphic
over $A$.
\end{lem}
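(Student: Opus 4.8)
The plan is to use torsion-freeness of $A$ to produce a \emph{canonical} strict isomorphism rationally, and then show it descends. Over any $\mb Q$-algebra $k$, every formal group law $F$ has a logarithm $\log_F(x)=x+O(x^2)$, which is the unique strict isomorphism $F\to\mb G_a$; it follows that for any two formal group laws $F,F'$ over $k$ there is exactly one strict isomorphism $F\to F'$, namely $\log_{F'}^{-1}\circ\log_F$ (uniqueness: any strict isomorphism $\phi\co F\to F'$ has $\log_{F'}\circ\phi=\log_F$). Applying this over $A_{\mb Q}:=A\otimes\mb Q$ produces a distinguished strict isomorphism $\varphi\co \mb G\to\mb G'$ over $A_{\mb Q}$. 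Since $A$ is torsion-free, $A\pow{x}\hookrightarrow A_{\mb Q}\pow{x}$, so the whole problem reduces to showing that the coefficients of $\varphi$, a priori in $A_{\mb Q}$, actually lie in $A$; the strict-isomorphism identity then holds automatically in $A\pow{x,y}$ and $\varphi$ is strict over $A$ because its leading coefficient is $1$.

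Next I would bring in $B$. Flatness of $f$ together with torsion-freeness of $A$ forces $B$ to be torsion-free (tensor the injection $A\hookrightarrow A_{\mb Q}$ with $B$, using $B\otimes_A A_{\mb Q}=B\otimes_{\mb Z}\mb Q$), so $B\hookrightarrow B_{\mb Q}:=B\otimes\mb Q$. The given strict isomorphism $f^*\mb G\to f^*\mb G'$ over $B$, base-changed to $B_{\mb Q}$, must coincide with the canonical strict isomorphism $f^*\mb G\to f^*\mb G'$ over $B_{\mb Q}$ by the uniqueness above; but that canonical isomorphism is also the base change of $\varphi$ along $A_{\mb Q}\to B_{\mb Q}$. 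Hence every coefficient of $\varphi$ maps, under $A_{\mb Q}\to B_{\mb Q}$, into the subring $B\subseteq B_{\mb Q}$.

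It remains to verify the commutative-algebra statement that the square with corners $A$, $A_{\mb Q}$, $B$, $B_{\mb Q}$ is Cartesian, i.e.\ $A=A_{\mb Q}\cap B$ inside $B_{\mb Q}$; this pins the coefficients of $\varphi$ down to $A$ and finishes the argument. Comparing the short exact sequences $0\to A\to A_{\mb Q}\to A\otimes_{\mb Z}(\mb Q/\mb Z)\to 0$ and $0\to B\to B_{\mb Q}\to B\otimes_{\mb Z}(\mb Q/\mb Z)\to 0$ (here $A\otimes_{\mb Z}(\mb Q/\mb Z)=A_{\mb Q}/A$ since torsion-free abelian groups are $\mb Z$-flat, and similarly for $B$), the square is Cartesian precisely when $A\otimes_{\mb Z}(\mb Q/\mb Z)\to B\otimes_{\mb Z}(\mb Q/\mb Z)$ is injective. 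But the latter is the base change along the faithfully flat map $A\to B$ of the $A$-module $A\otimes_{\mb Z}(\mb Q/\mb Z)$, and $M\to M\otimes_A B$ is injective for every $A$-module $M$ when $A\to B$ is faithfully flat.

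The one point requiring care is conceptual rather than computational: one should resist trying to descend the given $B$-isomorphism by faithfully flat descent directly, since there is no reason its two pullbacks to $B\otimes_A B$ agree; the right move is to notice that torsion-freeness furnishes a preferred rational isomorphism that is automatically compatible with everything. The hypotheses are sharp: without torsion-freeness the statement fails (take $A=\mb Z$ with $\mb G$ additive and $\mb G'$ multiplicative, $B=\mb Q$), and faithful flatness — not merely flatness — is exactly what makes the Cartesian-square step work.
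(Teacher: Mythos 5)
Your argument is correct and reorganizes the proof in an instructive way. The paper descends the given strict isomorphism $h$ over $B$ directly: the two pullbacks $\eta_L^*(h),\eta_R^*(h)$ over $B\otimes_A B$ are shown to agree (because $B\otimes_A B$ is torsion-free, so one can pass to its rationalization, where strict automorphisms of the additive group are trivial), and then faithfully flat descent in the form ``the equalizer of $\eta_L,\eta_R\co B\rightrightarrows B\otimes_A B$ is $A$'' shows that $h$ has coefficients in $A$. You instead start from the canonical rational isomorphism $\varphi$ over $A\otimes\mb Q$, use the given $h$ together with uniqueness of strict isomorphisms over $\mb Q$-algebras only to conclude that the coefficients of $\varphi$ land in $B\subseteq B\otimes\mb Q$, and finish with the Cartesian-square identity $A=(A\otimes\mb Q)\cap B$ inside $B\otimes\mb Q$, which you reduce to injectivity of $M\to M\otimes_A B$ for $M=A\otimes(\mb Q/\mb Z)$. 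Both proofs rest on the same three inputs---torsion-freeness, uniqueness of strict isomorphisms in characteristic zero, and faithfully flat descent---but you invoke descent only in the elementary form of a Cartesian square and never have to touch $B\otimes_A B$; the paper invokes it as the equalizer statement for the Amitsur complex. These are closely related special cases (take $M=A\otimes\mb Q/\mb Z$ versus $M=A$), so the arguments are siblings rather than strangers, but the packaging is genuinely different.

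One remark in your closing paragraph is incorrect and should be struck. You claim one should ``resist trying to descend the given $B$-isomorphism by faithfully flat descent directly, since there is no reason its two pullbacks to $B\otimes_A B$ agree.'' They do agree, for exactly the reason your own argument succeeds: $B\otimes_A B$ is torsion-free (it is flat over the torsion-free ring $A$), hence embeds in its rationalization, where both pullbacks of $h$ are strict isomorphisms $g^*\mb G\to g^*\mb G'$ and therefore coincide by the uniqueness you yourself establish. This direct descent of $h$ is in fact the paper's proof; it is not a dead end but a different packaging of the same facts, and the ``torsion-freeness furnishes a preferred rational isomorphism'' observation is what makes both routes go through.
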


\begin{proof}
Let $h(x) = x + \sum b_n x^{n+1}$ be a strict isomorphism between
$\mb G$ and $\mb G'$ with coefficients $b_n\in B$.  Consider the
ring $B \otimes_A B$, equipped with two unit maps $\eta_L, \eta_R\co
B \to B \otimes_A B$.  We define $g$ to be the common composite
$\eta_L \circ f = \eta_R \circ f$.  The power series $\eta_L^*(h)$
and $\eta_R^*(h)$ provide two strict isomorphisms $g^* \mb G
\rightrightarrows g^* \mb G'$.

The map $g$ is flat.  Hence $B \otimes_A B$ is torsion-free, and
embeds into its rationalization $(B \otimes_A B) \otimes \mb Q$.  Over
this ring the composite $\eta_L^*(h)^{-1} \circ \eta_R^*(h)$ is a
strict automorphism of $(g^* \mb G)_{\mb Q}$, which is isomorphic to
the additive group.  The only strict automorphism of the additive group
in characteristic zero is the identity, and hence $\eta_L^*(h) =
\eta_R^*(h)$.

The power series $h$ thus has coefficients in the equalizer of the
maps
\[
\eta_L,\eta_R\co B \rightrightarrows B \otimes_A B.
\]
However, faithfully flat descent implies that this equalizer is
precisely $A$.
\end{proof}

\begin{cor}
\label{cor:orientation-descent}
Suppose $f\co R \to S$ is a map of complex oriented, homotopy
commutative ring spectra such that $R_*$ and $S_*$ are concentrated in
even degrees, $R_*$ is torsion-free, and the induced map $f_*\co
R_* \to S_*$ is faithfully flat.  Suppose that, on homotopy groups,
the orientation $g\co MU \to S$ has a factorization $MU_* \to R_* \to
S_*$.  Then the orientation $g$ itself factors through $R$.
\end{cor}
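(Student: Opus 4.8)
The plan is to reduce the statement to the formal-group descent of Lemma~\ref{lem:fgl-descent}. Recall that for any homotopy commutative ring spectrum $T$, homotopy classes of multiplicative maps $MU\to T$ are in natural bijection with complex orientations of $T$, that is, with classes $x\in\widetilde T^2(\mb{CP}^\infty)$ restricting to the canonical generator of $\widetilde T^2(\mb{CP}^1)\cong\pi_0T$. So it suffices to produce a complex orientation $x$ of $R$ whose image $f^*x\in\widetilde S^2(\mb{CP}^\infty)$ equals the orientation class $x_g$ of $g$; the multiplicative map $\widetilde g\co MU\to R$ classified by such an $x$ then satisfies $f\circ\widetilde g\simeq g$. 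Fix the orientation class $x_R$ of the chosen orientation of $R$. Since $R$ and $S$ are complex oriented, $R^*(\mb{CP}^\infty)\cong R_*\pow{x_R}$ and $S^*(\mb{CP}^\infty)\cong S_*\pow{x_S}$ with $x_S:=f^*x_R$, and expressing $x_g$ in terms of $x_S$ yields a power series $\psi(t)=t+\sum_{n\ge1}c_nt^{n+1}$, $c_n\in S_*$, with $x_g=\psi(x_S)$.

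Reading this off on $\mb{CP}^\infty\times\mb{CP}^\infty$, $\psi$ is a strict isomorphism over $S_*$ from the formal group law of $(R,x_R)$, pulled back along $f_*\co R_*\to S_*$, to the formal group law of $(S,x_g)$, which is the one classified by $g_*\co MU_*\to S_*$. By hypothesis $g_*$ factors as $MU_*\xrightarrow{\bar g}R_*\xrightarrow{f_*}S_*$; since $f_*$ is faithfully flat, hence injective, this factorization is unique, and $\bar g$ classifies a formal group law $\mb{G}'$ over $R_*$ whose pullback along $f_*$ is the target of $\psi$. Thus $\psi$ is a strict isomorphism, over $S_*$, between the two pullbacks to $S_*$ of formal group laws over $R_*$. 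Running the argument in the proof of Lemma~\ref{lem:fgl-descent} — here using that $R_*$ is torsion-free and $f_*$ faithfully flat — the coefficients $c_n$ all lie in $R_*\subseteq S_*$, so $\psi=f_*(h)$ for a unique strict isomorphism $h(t)=t+\sum_{n\ge1}b_nt^{n+1}$ over $R_*$ between the formal group law of $(R,x_R)$ and $\mb{G}'$.

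Now set $x:=h(x_R)\in R^*(\mb{CP}^\infty)$. As $h$ is strict, $x$ restricts to the generator on $\mb{CP}^1$, so it is a complex orientation of $R$, and $f^*x=(f_*h)(x_S)=\psi(x_S)=x_g$. For the multiplicative map $\widetilde g\co MU\to R$ classified by $x$, the composite $f\circ\widetilde g$ then has orientation class $x_g$, so $f\circ\widetilde g\simeq g$ by the bijection recalled above; this gives the required factorization. The only step with real content is the descent of the specific reparametrization $\psi$ — equivalently, that each $c_n$ lies in the equalizer of $S_*\rightrightarrows S_*\otimes_{R_*}S_*$ — and this is exactly where torsion-freeness of $R_*$ and faithful flatness of $f_*$ enter: both $S_*$ and $S_*\otimes_{R_*}S_*$ are then torsion-free, hence embed into their rationalizations, over which the relevant formal group law is additive and has no nontrivial strict automorphism. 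Everything else is formal bookkeeping with orientation classes and the universal property of $MU$.
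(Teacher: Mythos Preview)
Your proof is correct and follows essentially the same route as the paper's: fix an auxiliary orientation of $R$, compare the two resulting formal group laws over $S_*$ (both pushed forward from $R_*$), invoke Lemma~\ref{lem:fgl-descent} to descend the strict isomorphism to $R_*$, and then use the bijection between ring maps $MU\to R$ and strict isomorphisms to obtain the lifted orientation. The only difference is presentational: you work explicitly with orientation classes in $\widetilde R^2(\mb{CP}^\infty)$ and note that the \emph{proof} of Lemma~\ref{lem:fgl-descent} actually shows the specific isomorphism $\psi$ descends, whereas the paper phrases the final step via the abstract bijection between ring maps out of $MU$ and strict isomorphisms.
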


\begin{proof}
We have a second orientation $g'\co MU \to R \stackrel{f}{\to} S$,
given by the composite of any orientation of $R$ with the map $f$.
The two induced maps $g_*,g'_*\co MU_* \to S_*$ both factor through
$R_*$ by assumption, and as they arise from two orientations $MU\to
S$ the two formal group laws classified by $g_*$ and $g'_*$ over
$S_*$ are strictly isomorphic.  Lemma~\ref{lem:fgl-descent} shows
that there is a strict isomorphism of these formal group laws over
$R_*$. The result follows: any choice of orientation $MU \to R$,
with induced formal group law $\mb G$, establishes a bijection
between homotopy classes of ring spectrum maps $MU \to R$ and strict
isomorphisms with domain $\mb G$.
\end{proof}

\begin{lem}
\label{lem:fgl-pullback}
Suppose that we have a homotopy pullback diagram
\[
\xymatrix{
R \ar[r] \ar[d] & R_1 \ar[d]\\
R_2 \ar[r] & S
}
\]
of homotopy commutative ring spectra such that $S$ is complex
orientable and $S_*$ is concentrated in even degrees.  Then the set of
complex orientations of $R$ maps isomorphically to the pullback of the
sets of complex orientations of $R_1$ and $R_2$ over the set of
complex orientations of $S$.
\end{lem}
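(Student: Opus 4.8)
The plan is to reduce the statement to a Mayer--Vietoris computation in reduced $E$-cohomology of $\mb{CP}^\infty$. Recall that a complex orientation of a homotopy commutative ring spectrum $E$ is exactly a class $\xi \in \widetilde{E}^2(\mb{CP}^\infty)$ whose restriction along the inclusion $\mb{CP}^1 \hookrightarrow \mb{CP}^\infty$ is the canonical unit class $1 \in \widetilde{E}^2(\mb{CP}^1) = \pi_0 E$. The comparison map asserted by the lemma sends an orientation $\xi$ of $R$ to the pair consisting of its images under $R \to R_1$ and $R \to R_2$, i.e. its images in $\widetilde{R_1}^2(\mb{CP}^\infty)$ and $\widetilde{R_2}^2(\mb{CP}^\infty)$; so it suffices to compute $\widetilde{R}^2(\mb{CP}^\infty)$ as a fibre product and then verify that the unit normalization is detected correctly across the square.

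First I would use that, since the square is a homotopy pullback, $R$ is the homotopy fibre of the difference map $R_1 \times R_2 \to S$. Hence for every based space $X$ there is a long exact Mayer--Vietoris sequence
\[
\cdots \to \widetilde{R}^{n}(X) \to \widetilde{R_1}^{n}(X) \oplus \widetilde{R_2}^{n}(X) \xrightarrow{f^* - g^*} \widetilde{S}^{n}(X) \to \widetilde{R}^{n+1}(X) \to \cdots
\]
which is natural in $X$. I apply this with $X = \mb{CP}^\infty$ and with $X = \mb{CP}^1 = S^2$, and use naturality along $\mb{CP}^1 \hookrightarrow \mb{CP}^\infty$ to get a map between the two sequences. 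Since $S$ is complex orientable with $S_*$ concentrated in even degrees, $\widetilde{S}^{\mathrm{odd}}(\mb{CP}^\infty) = 0$ (it is a Mittag--Leffler limit of the degreewise finitely generated free $S^*$-modules $\widetilde{S}^*(\mb{CP}^n)$); in particular $\widetilde{S}^1(\mb{CP}^\infty) = 0$, and likewise $\widetilde{S}^1(\mb{CP}^1) = \pi_1 S = 0$. Feeding these vanishings into the two sequences gives isomorphisms
\[
\widetilde{R}^2(\mb{CP}^\infty) \cong \widetilde{R_1}^2(\mb{CP}^\infty) \times_{\widetilde{S}^2(\mb{CP}^\infty)} \widetilde{R_2}^2(\mb{CP}^\infty), \qquad \pi_0 R \cong \pi_0 R_1 \times_{\pi_0 S} \pi_0 R_2,
\]
compatible with the restriction maps $\widetilde{E}^2(\mb{CP}^\infty) \to \widetilde{E}^2(\mb{CP}^1) = \pi_0 E$ for $E \in \{R, R_1, R_2, S\}$.

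Finally I read off the orientations. Under the first isomorphism a class $\xi \in \widetilde{R}^2(\mb{CP}^\infty)$ corresponds to a pair $(\xi_1, \xi_2)$ with $f^* \xi_1 = g^* \xi_2$ in $\widetilde{S}^2(\mb{CP}^\infty)$, and the compatibility of the two isomorphisms with restriction means that $\xi|_{\mb{CP}^1} \in \pi_0 R$ corresponds to the pair $(\xi_1|_{\mb{CP}^1}, \xi_2|_{\mb{CP}^1})$. Thus $\xi$ is an orientation of $R$ (i.e. $\xi|_{\mb{CP}^1} = 1$) if and only if $\xi_i|_{\mb{CP}^1} = 1$ for $i = 1,2$, i.e. if and only if each $\xi_i$ is a complex orientation of $R_i$; and when this holds $f^* \xi_1 = g^* \xi_2$ restricts to $1 \in \pi_0 S$ because $f$ and $g$ are ring maps, so this common class is precisely the orientation of $S$ induced by $\xi_1$ and by $\xi_2$. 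This exhibits the set of orientations of $R$ as the pullback of the sets of orientations of $R_1$ and $R_2$ over that of $S$.

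The argument is essentially formal; the one place requiring care is the unit normalization, and this is exactly where the hypothesis on $S_*$ enters. Knowing $\xi_i|_{\mb{CP}^1} = 1$ in each $\pi_0 R_i$ only shows a priori that $\xi|_{\mb{CP}^1}$ is an element of $\pi_0 R$ mapping to $1$ in each $\pi_0 R_i$, and the connecting homomorphism $\pi_1 S \to \pi_0 R$ of the Mayer--Vietoris sequence for $S^2$ could obstruct the conclusion $\xi|_{\mb{CP}^1} = 1$; the vanishing $\pi_1 S = 0$ (together with the analogous vanishing $\widetilde{S}^1(\mb{CP}^\infty) = 0$ used on $\mb{CP}^\infty$ itself) removes this obstruction. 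Everything else is the naturality of Mayer--Vietoris and the fact that ring maps preserve the unit.
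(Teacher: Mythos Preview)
Your argument is correct and uses the same Mayer--Vietoris idea as the paper, but the two proofs package the notion of orientation differently. The paper works with the characterization of orientations as homotopy classes of ring spectrum maps $MU \to E$: it computes $S^*MU$ via $S_*MU \cong S_*[b_i]$ to see that $[MU,-]$ applied to the square is a pullback of sets, and then repeats the argument for $\mb S$ and $MU \smsh{} MU$ in order to cut down from all maps $MU \to R$ to those compatible with the unit and multiplication. Your approach instead uses the more classical description of an orientation as a class in $\widetilde{E}^2(\mb{CP}^\infty)$ with the correct restriction to $\mb{CP}^1$, which lets you avoid any separate treatment of the ring structure: once you have the pullback on $\widetilde{(-)}^2(\mb{CP}^\infty)$ and on $\pi_0$, the normalization condition is automatic. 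This is slightly more elementary and shorter; the paper's formulation, on the other hand, is phrased directly in the language of ring maps from $MU$, which is the form in which the result is used elsewhere in the paper (e.g.\ in Corollary~\ref{cor:orientation-descent}).
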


\begin{proof}
Recall that a complex orientation of $R$ is the same as the
homotopy class of a map of ring spectra $MU\to R$.  By
\cite[Proposition 2.16]{hovey-strickland} and the classical
computation $MU_*MU\cong MU_*[b_i]$ with $|b_i|=2i$ $(i\ge 1)$, we
have isomorphisms
\[
  S_* MU \cong S_*[b_i]\text{ and }S^* MU \cong \Hom_{S_*}(S_* MU, S_*).
\]
In particular, both groups are concentrated in even degrees by the
assumption on $S_*$, and the Mayer-Vietoris sequence degenerates to
a pullback diagram
\[
\xymatrix{
[MU,R] \ar[r] \ar[d] & [MU,R_1] \ar[d] \\
[MU,R_2] \ar[r] & [MU,S].
}
\]
The same argument applies with $\mb S$ and $MU \smsh{} MU$ in place of
$MU$.  Therefore, restricting to the subsets of maps commuting with
the unit map and multiplication map, we obtain a pullback diagram of
sets of homotopy classes of ring spectrum maps, as desired.
\end{proof}

The following lemma applies Cooke's obstruction theory
\cite{cooke} to lift group actions and equivariant maps from the
homotopy category of $E_\infty$-ring spectra to honest actions.
Recall all spectra are implicitly assumed to be $p$-local for a fixed
prime $p$.  These techniques have been previously applied to the
construction of group actions on $E_\infty$-ring spectra
\cite{behrens-construct,szymik-localk3}.

\begin{lem}
\label{lem:groupactions}
Let $G$ be a finite group of order prime to $p$.
  \begin{enumerate}
  \item Suppose that $R$ is an $E_\infty$-ring spectrum.
    Any action of $G$ on $R$ in the homotopy category of
    $E_\infty$-ring spectra has a unique lift, up to weak equivalence,
    to an action of $G$ on $R$ in the category of $E_\infty$-ring spectra.
  \item Let $R$ and $S$ be $E_\infty$-ring spectra, equipped
    with actions of $G$ in the category of $E_\infty$-ring spectra.
    Suppose that $\pi_1 \Map_{E_\infty}(R,S)$ is abelian for all
      choices of basepoint. Then the natural map
\[
\pi_0 \Map^G_{E_\infty}(R,S) \to
(\pi_0 \Map_{E_\infty}(R,S))^G,
\]
from the connected components of the derived mapping space of
$G$-equivariant $E_\infty$-ring spectra maps to $G$-invariant
components of the derived $E_\infty$-mapping space, is an
isomorphism.
  \end{enumerate}
\end{lem}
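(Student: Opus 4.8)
The plan is to deduce both parts from Cooke's obstruction theory \cite{cooke}, applied inside a point-set model for $E_\infty$-ring spectra (say commutative $S$-algebras, regarded as a simplicial model category), using repeatedly the elementary fact that for a finite group $G$ with $|G|$ prime to $p$ and any $p$-local abelian group $M$ carrying a $G$-action one has $H^n(G;M)=0$ for all $n>0$. The coefficient groups that occur will be the homotopy groups $\pi_i\hAut_{E_\infty}(R)$ and $\pi_i\Map_{E_\infty}(R,S)$ in positive degrees $i\ge 1$; by Goerss--Hopkins obstruction theory \cite{goerss-hopkins} these admit an André--Quillen-type description which exhibits them as modules over $\pi_0 R$, respectively over $\pi_0 S$, hence as $p$-local groups, so that $|G|$ acts invertibly on them and the cited vanishing applies.

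For part (1), fix a fibrant-cofibrant model of $R$ and let $\hAut_{E_\infty}(R)$ denote the grouplike monoid of its self-equivalences, with classifying space $B\hAut_{E_\infty}(R)$. An action of $G$ on $R$ in the category of $E_\infty$-ring spectra, taken up to weak equivalence, is the same datum as a homotopy class of maps $BG\to B\hAut_{E_\infty}(R)$, and an action of $G$ in the homotopy category is a homotopy class of maps $BG\to B\pi_0\hAut_{E_\infty}(R)$; it therefore suffices to show that the forgetful map between these two sets of homotopy classes is a bijection. I would prove this by running the Moore--Postnikov tower of the map $B\hAut_{E_\infty}(R)\to B\pi_0\hAut_{E_\infty}(R)$, whose homotopy fibre is simply connected with $n$-th homotopy group $\pi_{n-1}\hAut_{E_\infty}(R)$ for $n\ge 2$, so that the tower has layers which are principal $K(\pi_{n-1}\hAut_{E_\infty}(R),n)$-fibrations for $n\ge 2$. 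The obstruction to extending a partial lift of a given map $BG\to B\pi_0\hAut_{E_\infty}(R)$ across the $n$-th stage lies in $H^{n+1}(G;\pi_{n-1}\hAut_{E_\infty}(R))$, and the set of such extensions, once nonempty, is a torsor under $H^{n}(G;\pi_{n-1}\hAut_{E_\infty}(R))$; for $n\ge 2$ both groups vanish, so the lift exists and is unique up to homotopy. This is precisely the asserted existence and uniqueness of the action.

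For part (2), first identify $\Map^G_{E_\infty}(R,S)$ with the homotopy fixed points $Y^{hG}$, where $Y=\Map_{E_\infty}(R,S)$ carries the $G$-action obtained by precomposing with the action on $R$ and postcomposing with the action on $S$ --- indeed, the mapping space between two objects of the category of $E_\infty$-ring spectra with $G$-action is the homotopy limit over $BG$ of the underlying mapping space, which for a $G$-space is its homotopy fixed points. Now decompose $Y$ into its path components, which $G$ permutes; since $EG$ is connected, every $G$-equivariant map $EG\to Y$ factors through a single component, necessarily one fixed by $G$, i.e.\ indexed by an element of $(\pi_0Y)^G$, and conversely each $G$-fixed component contributes. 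Thus $Y^{hG}$ is the disjoint union of the spaces $(Y_y)^{hG}$ over $y\in(\pi_0Y)^G$, and it remains to show that each $(Y_y)^{hG}$ is nonempty and connected. But $Y_y$ is path connected, its homotopy groups in degrees $\ge 1$ are $p$-local, and $\pi_1 Y_y$ is abelian by hypothesis, so the obstruction theory for $G$-equivariant maps out of $EG$ into $Y_y$ runs with obstruction groups $H^{n+1}(G;\pi_n Y_y)$ and difference groups $H^{n}(G;\pi_n Y_y)$ for $n\ge 1$, all of which vanish; hence $(Y_y)^{hG}$ is nonempty and connected. Consequently $\pi_0(Y^{hG})\cong(\pi_0Y)^G$, and unwinding the identifications shows that this bijection is the natural map of the statement.

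The obstruction-theoretic bookkeeping above is routine, and the real content lies in the two inputs that make it run. The first is the identification of the positive-degree homotopy groups of the automorphism and mapping spaces in question as modules over a $p$-local ring; this is the only place where $p$-locality of all spectra together with the assumption that $|G|$ be prime to $p$ enters, and it is what forces every group-cohomology group appearing in the argument to vanish. The second, more minor point is that the hypothesis in (2) that $\pi_1\Map_{E_\infty}(R,S)$ be abelian at every basepoint is precisely what permits the equivariant obstruction theory into each non-simply-connected component $Y_y$ to be phrased with ordinary abelian group cohomology rather than with nonabelian cocycle data; this is the one spot where some care beyond formal nonsense is needed.
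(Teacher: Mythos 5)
Your overall strategy matches the paper's in its essence: both reduce the rigidification problems to the vanishing of $H^{>0}(G;M)$ for $M$ a positive-degree homotopy group of an $E_\infty$-mapping space, via Cooke-style Postnikov obstruction theory. (You treat (1) and (2) separately, via the Moore--Postnikov tower of $B\hAut_{E_\infty}(R)\to B\pi_0\hAut_{E_\infty}(R)$ and via the decomposition of $\Map^G_{E_\infty}(R,S)$ into the pieces $(Y_y)^{hG}$; the paper does both at once by introducing $\hAut_{E_\infty}(f)$ of a fixed map $f\co R\to S$ as a homotopy pullback, analyzing its $\pi_0$ by Mayer--Vietoris, and then lifting $BG\to B\pi_0\hAut_{E_\infty}(f)$. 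That difference is organizational, and either packaging is fine.) The genuine gap is in the step where you justify that $|G|$ acts invertibly on those coefficient groups. You appeal to Goerss--Hopkins obstruction theory to present $\pi_i\hAut_{E_\infty}(R)$ and $\pi_i\Map_{E_\infty}(R,S)$ for $i\ge 1$ as $\pi_0 R$- resp.\ $\pi_0 S$-modules via an Andr\'e--Quillen description. But Goerss--Hopkins obstruction theory is not available for arbitrary $p$-local $E_\infty$-ring spectra: it requires fixing an Adams-type homology theory satisfying flatness and Adams conditions, and the lemma carries no such hypothesis. Even where the spectral sequence does apply, its abutment is a successive extension of subquotients of Andr\'e--Quillen groups over rings built from $E_*R$ and $E_*S$, not on its face a $\pi_0 S$-module; the claim as stated is not what that theory delivers.

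The paper instead argues directly at the level of the mapping space, in a way that needs only $p$-locality of $S$. By adjunction, a based map $S^n\to\Map_{E_\infty}(R,S)$ at the basepoint $f$ is an $E_\infty$-lift of $f$ along the cotensor projection $S^{S^n}\to S$, where $S^{S^n}$ is the $E_\infty$-ring $F(\Sigma^\infty_+ S^n,S)$. For $n\ge 1$ the degree-$|G|$ self-map of $S^n$ induces an $E_\infty$ self-map of $S^{S^n}$ over $S$; since $\Sigma^\infty_+ S^n\simeq\mb S\vee\Sigma^n\mb S$ and $S$ is $p$-local, this self-map is a weak equivalence. Passing to $\pi_0$ of the space of lifts shows that $g\mapsto g^{|G|}$ is a bijection on $\pi_n(\Map_{E_\infty}(R,S),f)$ for every $n\ge 1$ and every basepoint $f$, which is exactly the $|G|$-divisibility you need. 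If you replace your Goerss--Hopkins step with this cotensor observation, the remainder of your outline goes through.
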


\begin{proof}
  Let $f\co R \to S$ be a map of $E_\infty$-ring spectra.  Without
  loss of generality, assume that $R$ and $S$ are
  fibrant-cofibrant and that the map $f$ is a cofibration.
  We define $\hAut_{E_\infty}(R) \subseteq \Map_{E_\infty}(R,R)$ to be
  the topological submonoid consisting of homotopy equivalences; this
  is a union of path components.  This allows us to introduce a monoid
  of homotopy automorphisms of the map $f$ by a pullback diagram of
  mapping spaces (which is a homotopy pullback) as follows:
\[
\xymatrix{
  \hAut_{E_\infty}(f) \ar[r] \ar[d] &
  \hAut_{E_\infty}(R) \ar[d]^{f_*}\\
  \hAut_{E_\infty}(S) \ar[r]_{f^*\hspace{1pc}}&
  \Map_{E_\infty}(R,S)
}
\]
We will show that, for any map $f\co R \to S$ of $E_\infty$-ring
spectra and any homomorphism
\[
G \to \pi_0 \hAut_{E_\infty}(R) \times_{\pi_0 \Map_{E_\infty}(R,S)}
  \pi_0 \hAut_{E_\infty}(S),
\]
the map $f$ is weakly equivalent to a unique replacement map $R'
\to S'$ of $G$-equivariant $E_\infty$-ring spectra.  Taking $f$ to be
the identity map of $R$ proves the first statement of the lemma;
taking $f$ to be arbitrary proves the second.

Having fixed a base map $f$, the space of based maps $S^n \to
  \Map_{E_\infty}(R,S)$ is identified by adjunction with the space of
  $E_\infty$-maps which are lifts in the diagram
\[
\xymatrix{
& S^{S^n} \ar[d] \\
R \ar[r]^{f} \ar@{.>}[ur] & S.
}
\]
For $n > 0$, the multiplication-by-$|G|$ map on $S^n$ induces an
$E_\infty$ self-equivalence of $S^{S^n}$.  Passing to $\pi_0$, we find
that $g \mapsto g^{|G|}$ is a bijective self-map of
$\pi_n(\Map_{E_\infty}(R,S),f)$. Therefore, the Mayer-Vietoris
sequence implies that $\pi_0 \hAut_{E_\infty} (f)$ is an extension
of $\pi_0 \hAut_{E_\infty}(R) \times_{\pi_0 \Map_{E_\infty}(R,S)}
\pi_0 \hAut_{E_\infty}(S)$ by an abelian group in which
multiplication by $|G|$ is an isomorphism.  This implies that the
given homomorphism from $G$ to the pullback on $\pi_0$, representing
actions of $G$ on $R$ and $S$ commuting with $f$ in the homotopy
category, lifts uniquely to a homomorphism
\[
G \to \pi_0 \hAut_{E_\infty}(f).
\]
Upon applying classifying spaces of the associated monoids, we obtain
a diagram of spaces as follows:
\[
\xymatrix{
&B\hAut_{E_\infty}(f) \ar@{>>}[d]\\
BG \ar@{..>}[ur] \ar[r] & B\pi_0\hAut_{E_\infty}(f)
}
\]
We now prove that there is a unique lift in the above
diagram up to homotopy.
The standard Postnikov-tower obstruction theory for lifting a map 
$Z \to X$ along a map $Y \to X$ gives an iterative sequence of
obstructions in $H^s(Z; \pi_s(X,Y))$, with choices of
lifts determined by classes in $H^s(Z; \pi_{s+1}(X,Y))$.  In the present
case, the homotopy groups of $B\hAut_{E_\infty}(f)$ relative to
$B\pi_0\hAut_{E_\infty}(f)$ are $p$-local in degrees above $1$, and so
the cohomology of $BG$ vanishes; this implies the
existence of a unique lift of $f$ to an equivariant map.

Given this lift, we can apply Cooke's
techniques to replace the map $f\co R \to S$ by a weakly 
equivalent map in the homotopy category such that $G$ acts on $R$ and
$S$ in a way making the map $f$ $G$-equivariant, as desired.
\end{proof}

\section{$K(2)$-local realization}
\label{sec:k2-localization}

In this section we apply the Goerss-Hopkins-Miller theorem to show
that any realization problem, as in
  Definition~\ref{defn:realization}, has an essentially unique
$K(2)$-local solution.

Let $\zeta$ be a primitive $(p^2-1)$'st root of unity in $\mb
F_{p^2}$.  We will also use $\zeta$ to denote its Teichm\"uller lift
in the $p$-typical Witt ring $\mb W(\mb F_{p^2}) \cong \mb
Z_p[\zeta]$.  Let $\sigma$ denote the nontrivial element of the Galois
group $Gal(\mb F_{p^2}/\mb F_p)$.

\begin{defn}
\label{def:associated-lubintate}
Suppose $(A,\mb G)$ is a realization problem, and let $v \in
A_{2p^2-2}$ be any lift of the canonical class $v_2$ (which is only
well-defined mod $(p,v_1)$).  The {\em associated Lubin-Tate
ring} is defined to be the graded $A$-algebra
\[
B = A_{K(2)}[\zeta, u]/(u^{p^2-1} - v)
\]
Here $|u_1|=0$ and $|u|=2$.

The {\em associated group} $G$ is defined to be the semidirect product
\[
(\mb F_{p^2}^\times) \rtimes Gal(\mb F_{p^2}/\mb F_p)
\]
formed with respect to the action of the Galois group on $\mb
F_{p^2}^\times$.  The group $G$ acts on $\zeta$ through the
quotient $Gal(\mb F_{p^2}/\mb F_p)$, and any element $x \in \mb
F_{p^2}^\times\subseteq G$ fixes $\zeta$ and acts on $u$ by
multiplication by the Teichmuller lift of $x$.
\end{defn}

\begin{rmk}
If we choose a lift of $v_1$-class to $A$, we obtain by Remark
  \ref{rmk:ringcalculations} an isomorphism
\[
B  \cong \mb W(\mb F_{p^2})\pow{u_1}[u^{\pm 1}], 
\]
where the $A$-algebra structure is determined by $v_1 \mapsto u_1
u^{p-1}$, $v_2 \mapsto u^{p^2-1}$.  The group $G$ then acts on the
whole graded $\mb Z_p$-algebra $B$ as follows:
\begin{alignat*}{2}
\zeta \cdot u_1 &= \zeta^{p-1} u_1&\hspace{2pc}
\sigma \cdot u_1 &= u_1\\
\zeta \cdot u &= \zeta^{-1} u&
\sigma \cdot u &= u
\end{alignat*}
\end{rmk}

\begin{rmk}
\label{rmk:lubintate}
To justify referring to $B$ as a Lubin-Tate ring, we observe the
following.  The action of $G$ preserves the graded formal group law.
By construction, the ungraded quotient ring $B/(u-1)$ is the complete
local ring $\mb W(\mb F_{p^2})\pow{u_1}$ with maximal ideal $(p,u_1)$.
Modulo $p$, the image of the formal group law $\mb G$ of $B$ has
$p$-series congruent to $u_1 x^{p}$ plus higher order terms, and
modulo $(p,u_1)$ it has $p$-series congruent to $x^{p^2}$ plus higher
order terms.  Hence the image of $\mb G$ over $B$ is of exact
height $2$, carries a canonical choice of invariant differential, and
is a universal deformation of the reduction of this data to $\mb
F_{p^2}$ \cite[Proposition 1.1]{lubintate}.
\end{rmk}

\begin{lem}
\label{lem:lubintate-galois}
Suppose $(A,\mb G)$ is a realization problem, $v\in A_{2p^2-2}$ a lift
of $v_2$, and $B$ the associated Lubin-Tate ring with associated group
$G$. Then $B$ is a Galois extension of $A_{K(2)}$ with Galois group
$G$, in the sense of \cite[Definition 2.3.1]{rognesgalois}.
\end{lem}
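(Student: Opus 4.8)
The plan is to realize $A_{K(2)}\to B$ as a composite of two standard Galois extensions. Write $\Gamma:=Gal(\mb F_{p^2}/\mb F_p)$, fix a lift of the $v_1$-class to $A$ so that (as in the remarks following Definition~\ref{def:associated-lubintate}) $B\cong\mb W(\mb F_{p^2})\pow{u_1}[u^{\pm 1}]$ with the indicated $G$-action, and set $C:=A_{K(2)}[\zeta]$. The first layer $A_{K(2)}\to C$ is the base change along $\mb Z_p\to A_{K(2)}$ of the unramified extension $\mb Z_p\to\mb W(\mb F_{p^2})=\mb Z_p[\zeta]$; the latter is $\Gamma$-Galois, and since Galois extensions are stable under base change, so is $A_{K(2)}\to C$, which is moreover free of rank $2$. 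The second layer $C\to B=C[u]/(u^{p^2-1}-v)$ is a Kummer extension: $v=v_2$ is a unit of $A_{K(2)}$ by Remark~\ref{rmk:ringcalculations}, the integer $p^2-1$ is prime to $p$ hence invertible in the $p$-complete ring $C$, and $C$ contains the cyclic group $\mu_{p^2-1}=\langle\zeta\rangle$, identified with $\mb F_{p^2}^\times$ via the Teichm\"uller lift. Hence $C\to B$ is $\mb F_{p^2}^\times$-Galois, free of rank $p^2-1$, with $\eta\in\mb F_{p^2}^\times$ acting by $u\mapsto\eta u$ --- precisely the action of Definition~\ref{def:associated-lubintate}.

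I would then splice the two layers. The $\Gamma$-action on $C$ extends to the $G$-action on $B$ with $\sigma$ fixing $u$, and conjugating the Kummer action by $\sigma$ sends $(u\mapsto\eta u)$ to $(u\mapsto\sigma(\eta)u)=(u\mapsto\eta^p u)$; that is, the extension data gluing the two layers is exactly the Frobenius action of $\Gamma$ on $\mb F_{p^2}^\times$, so $1\to\mb F_{p^2}^\times\to G\to\Gamma\to 1$ is the semidirect-product group of Definition~\ref{def:associated-lubintate}. A standard composition lemma for towers of Galois extensions then gives that $A_{K(2)}\to B$ is $G$-Galois.

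It remains to unwind this into the defining conditions of \cite[Definition 2.3.1]{rognesgalois}. Faithful flatness is immediate, since $B$ is free over $A_{K(2)}$ of rank $2(p^2-1)=|G|$. The fixed-ring condition follows by iterating the layers, $B^{\mb F_{p^2}^\times}=C$ and $C^{\Gamma}=A_{K(2)}$; equivalently, one reads off from $B\cong\mb W(\mb F_{p^2})\pow{u_1}[u^{\pm 1}]$ that $B^G$ is the closed $\mb Z_p$-span of the monomials $u_1^a u^b$ with $(p-1)a\equiv b\pmod{p^2-1}$, and writing $a=(p+1)q+k$ with $0\le k\le p$ identifies these with the elements $x^q v_1^k v_2^m$, i.e.\ the basis of $A_{K(2)}$ in Remark~\ref{rmk:ringcalculations}. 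The condition that genuinely requires care is $B\otimes_{A_{K(2)}}B\xrightarrow{\sim}\prod_G B$: I would build it by tensoring the Kummer isomorphism $B\otimes_C B\cong\prod_{\mb F_{p^2}^\times}B$ with the cyclotomic isomorphism $C\otimes_{A_{K(2)}}C\cong C\times C$ over $C$, tracking the twisted $\Gamma$-action so that the index set assembles to $G$ and the comparison map is $b\otimes b'\mapsto(b\cdot g(b'))_g$. This bookkeeping --- together with checking that the resulting algebraic Galois conditions really do meet Rognes' definition for these graded $p$-complete rings (relevant because $|G|$ need not be invertible, e.g.\ $|G|=6$ at $p=2$, so this is not a matter of averaging) --- is the main obstacle; everything else is formal.
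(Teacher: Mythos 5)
Your proof is correct, but it takes a genuinely different route from the paper. The paper's argument is direct and terse: it observes that $B$ is free of rank $|G|$ over $A_{K(2)}$ and, crucially, exhibits a normal basis element $\sum_{i=0}^{p^2-2}\zeta u^i$ whose $G$-translates form a free $A_{K(2)}$-basis, so that $B\cong A_{K(2)}\otimes\mb Z[G]$ as a $G$-module; the fixed-ring and unramifiedness conditions of \cite[Definition 2.3.1]{rognesgalois} then drop out. You instead factor $A_{K(2)}\to B$ through $C=A_{K(2)}[\zeta]$ as a tower of two classical Galois layers --- an unramified/cyclotomic extension of degree $2$ and a Kummer extension of degree $p^2-1$ --- identify the splicing datum as the Frobenius action giving the semidirect product $G$, and invoke transitivity of Galois extensions. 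Both are valid. Your decomposition is more structural and makes the verification of $B\otimes_{A_{K(2)}}B\cong\prod_G B$ a matter of tensoring two known isomorphisms and tracking a $\Gamma$-twist, whereas the paper's normal-basis route is shorter but leaves the unramifiedness check implicit. One small imprecision in your write-up: $v$ is a chosen lift of $v_2$ in $A_{2p^2-2}$, not literally equal to $v_2$; it is nonetheless a unit in $A_{K(2)}$ because $A_{K(2)}$ is the $(p,v_1)$-adic completion and $v\equiv v_2$ mod $(p,v_1)$, so the Kummer step still applies. Also note that Rognes' definition is stated for commutative ring spectra; for graded commutative rings this specializes to the Chase--Harrison--Rosenberg notion, which is the one you (and the paper) are actually checking --- worth a one-line remark but not a gap.
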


\begin{proof}
The ring $B$ is the extension of $A_{K(2)}$ obtained by
adjoining the primitive $(p^2-1)$'st root of unity $\zeta$ and the
element $u$ satisfying $u^{p^2-1} = v$.  The underlying module
of $B$ is finitely generated free as a module over $A_{K(2)}$, with
basis given by the images of the element $\sum_{i=0}^{p^2-2} \zeta
u^i$ under the action of $G$.  Therefore,
\[
B \cong A_{K(2)} \otimes \mb Z[G]
\]
as a module with $G$-action.  The fixed subring is
$A_{K(2)}$, and the map
\[
B \otimes_{A_{K(2)}} B \to \prod_G B
\]
is an isomorphism, as desired.
\end{proof}

\begin{thm}
\label{thm:k2local-exists}
Suppose $(A,\mb G)$ is a realization problem.  Then there exists a
solution to the realization problem for the $A$-algebra $A_{K(2)}$ by
a $K(2)$-local $E_\infty$-ring spectrum $R_{K(2)}$, and any two
solutions to the realization problem for $A_{K(2)}$ are equivalent.
\end{thm}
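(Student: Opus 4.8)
The plan is to obtain $R_{K(2)}$ as the homotopy fixed point spectrum of the $G$-action on the Lubin-Tate spectrum attached to $B$, using the Goerss-Hopkins-Miller theorem, and then to verify that its homotopy groups are the prescribed $A$-algebra $A_{K(2)}$. First I would invoke Remark~\ref{rmk:lubintate}, which shows that the reduction of the formal group $\mb G$ over $B/(u-1) \cong \mb W(\mb F_{p^2})\pow{u_1}$ is a universal deformation of a height $2$ formal group law over $\mb F_{p^2}$, equipped with its canonical invariant differential. The Goerss-Hopkins-Miller theorem then produces an even-periodic $E_\infty$-ring spectrum $E_B$ with $\pi_0 E_B = B/(u-1)$ and $(E_B)_* = B$ (after choosing the periodicity generator to be $u$), realizing this universal deformation. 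By the functoriality of the Goerss-Hopkins-Miller construction, the $G$-action on $B$ described in Remark~\ref{def:associated-lubintate} --- which, by Lemma~\ref{lem:lubintate-galois}, makes $B$ a $G$-Galois extension of $A_{K(2)}$ and preserves the graded formal group law --- is realized by an action of $G$ on $E_B$ in the homotopy category of $E_\infty$-ring spectra. Since $|G| = (p^2-1)\cdot 2$ is prime to $p$ (here we use that $E_B$ and hence the whole problem is $p$-local), Lemma~\ref{lem:groupactions}(1) rigidifies this to an honest $G$-action on $E_B$ in $E_\infty$-ring spectra, and I set $R_{K(2)} = E_B^{hG}$.

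Next I would identify the homotopy groups. Because $|G|$ is invertible in $\pi_* E_B$, the homotopy fixed point spectral sequence collapses and $\pi_* R_{K(2)} = H^0(G; \pi_* E_B) = B^G$. By Lemma~\ref{lem:lubintate-galois} the fixed subring $B^G$ is exactly $A_{K(2)}$, so $\pi_* R_{K(2)} \cong A_{K(2)}$ as graded rings, and this isomorphism is compatible with the formal group data since the $G$-action preserves the graded formal group law on $B$ and $E_B$ carries the corresponding complex orientation. The spectrum $R_{K(2)}$ is $K(2)$-local since $E_B$ is $K(2)$-local and homotopy fixed points of a finite group preserve $K(n)$-locality. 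This produces a solution to the realization problem for $A_{K(2)}$; the content of the identification $B^G = A_{K(2)}$ and the collapse of the spectral sequence are the routine Galois-descent and order-prime-to-$p$ arguments.

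For uniqueness, suppose $T$ and $T'$ are two $K(2)$-local solutions of the realization problem for $A_{K(2)}$. Adjoining the element $u$ and the root of unity $\zeta$ to their homotopy --- equivalently, forming the corresponding Galois extensions in the sense of \cite{rognesgalois}, which is possible because the relevant extension $A_{K(2)} \to B$ is $G$-Galois with $|G|$ prime to $p$ --- produces $E_\infty$-ring spectra $\tilde T$, $\tilde T'$ with homotopy $B$ and carrying the universal deformation formal group, together with $G$-actions recovering $T$, $T'$ as homotopy fixed points. The Goerss-Hopkins-Miller uniqueness statement identifies $\tilde T \simeq E_B \simeq \tilde T'$ as $E_\infty$-ring spectra; by Proposition~\ref{prop:lubin-tate-descent} this equivalence respects the descent data for level structures, hence the $G$-actions agree in the homotopy category of $E_\infty$-ring spectra. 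Applying Lemma~\ref{lem:groupactions} (part (1) to rigidify, part (2) to match equivariant mapping spaces --- noting $\pi_1\Map_{E_\infty}(E_B,E_B)$ is abelian since $E_B$ is even-periodic) produces a $G$-equivariant equivalence $\tilde T \to \tilde T'$, and passing to homotopy fixed points gives an equivalence $T \to T'$ inducing the identity on $A_{K(2)}$. The main obstacle is the bookkeeping in this last paragraph: carefully setting up the Galois extensions $\tilde T$, $\tilde T'$ as $E_\infty$-ring spectra, checking that the Goerss-Hopkins-Miller equivalence between them can be taken $G$-equivariantly, and confirming the induced map on $A_{K(2)}$ is the identity rather than merely an abstract isomorphism --- this is where the hypotheses of Lemma~\ref{lem:groupactions} and Proposition~\ref{prop:lubin-tate-descent} must be combined with some care.
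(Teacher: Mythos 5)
Your approach is essentially the same as the paper's: construct $R_{K(2)}$ as the homotopy fixed points of the Lubin-Tate spectrum $E$ for the associated Lubin-Tate ring $B$, and deduce uniqueness by lifting arbitrary solutions to Galois extensions with homotopy $B$ and applying the Goerss-Hopkins-Miller rigidity. A few of your justifications, however, are off target even though the conclusions are correct. First, the claim that $\pi_1 \Map_{E_\infty}(E_B,E_B)$ is abelian ``since $E_B$ is even-periodic'' does not follow; even-periodicity alone says nothing about this fundamental group. What you actually need, and what the paper uses, is that the Goerss-Hopkins-Miller theorem makes the whole $E_\infty$ mapping space between Lubin-Tate spectra homotopically discrete, so $\pi_1$ is trivial; this is also why the $G$-action can be taken to be genuine without an appeal to Cooke's obstruction theory in the existence half. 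Second, in the uniqueness argument, Proposition~\ref{prop:lubin-tate-descent} (matching the two sources of descent data for level structures) is not the right tool for showing the two $G$-actions agree in the homotopy category; that agreement is again a direct consequence of GHM discreteness, since both actions realize the same algebraic action on $B$. Finally, the step where you pass from solutions $T$, $T'$ with homotopy $A_{K(2)}$ to $E_\infty$-ring spectra $\tilde T$, $\tilde T'$ with homotopy $B$ needs an honest citation: the paper invokes Baker and Richter's realizability result for \'etale extensions (\cite[Section 2]{baker-richter-algebraicgalois}) to produce $\tilde T$ as a $T$-algebra, and then Rognes' idempotent results to rigidify the $G$-action on it; ``forming the Galois extensions in the sense of \cite{rognesgalois}'' glosses over both of these nontrivial inputs.
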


\begin{proof}
We first show that a solution exists.  Let $v \in A_{2p^2-2}$ be any
lift of the canonical class $v_2$ as in
Definition~\ref{def:associated-lubintate}.  Let $B$ be the associated
Lubin-Tate ring and $G$ the associated group.

Remark~\ref{rmk:lubintate}, together with the Goerss-Hopkins-Miller
theorem \cite[Corollary 7.6]{goerss-hopkins-published}, imply that
there exists a Lubin-Tate spectrum $E$ with $E_* \cong B$ in a
manner preserving the associated formal group laws: the formal
group law of $B$ lifts to an orientation $MU \to E$.  This
spectrum $E$ has an essentially unique structure as an $E_\infty$-ring
spectrum, and the group $G$ acts on $E$ by maps of $E_\infty$-ring
spectra.  We therefore have a homotopy fixed point spectrum $E^{hG}$
which is an $E_\infty$-ring spectrum.  We define $R_{K(2)}$ to be
$E^{hG}$, and remark that $R_{K(2)}$ is $K(2)$-local since $E$
is. We will now show that it is a solution to the realization
problem for $A_{K(2)}$.

Lemma~\ref{lem:lubintate-galois} implies that the fixed subring of
$E_*$ under the action of $G$ is isomorphic to $A_{K(2)}$, and
that the homotopy fixed point spectral sequence
\[
H^s(G; \pi_t E) \Rightarrow \pi_{t-s} R_{K(2)}
\]
degenerates to an isomorphism
\[
\pi_*(R_{K(2)}) \cong A_{K(2)}.
\]

The orientation $MU_*\to E_*$ factors through $\pi_*(R_{K(2)}) \cong
A_{K(2)}$ because the graded formal group law of $E_*$ arises by
base change from $A$ (and hence from $A_{K(2)}$).  Therefore,
Corollary~\ref{cor:orientation-descent} implies that there exists
a corresponding lift of the orientation to an orientation $MU \to
R_{K(2)}$.  We thus find that $R_{K(2)}$ solves the realization
problem for $A_{K(2)}$.

We now show that any other solution is equivalent, and in
particular that the solution is independent of $v$.  Suppose $T$ is
another solution to the realization problem for $A_{K(2)}$.  By
Lemma~\ref{lem:lubintate-galois}, the map $T_* \cong A_{K(2)} \to B$
is a Galois extension with Galois group $G$, and in particular
\'etale.  Therefore, the results of \cite[Section
2]{baker-richter-algebraicgalois} imply that there exists a unique
$E_\infty$ $T$-algebra $E'$ inducing this map on homotopy groups.  As
the map $T \to E'$ is Galois on homotopy groups, we have a weak
equivalence
\[
E' \smsh{T} E' \to \prod_{g \in G} E'.
\]
Rognes' work on idempotents \cite[Section~10]{rognesgalois} shows
that $E' \smsh{T} E' \simeq \prod_G E'$ as a left $E'$-algebra, and
that the space of $T$-algebra maps $E' \to E'$, or equivalently the
space of $E'$-algebra maps $E' \smsh{T} E' \to E'$, is homotopically
discrete and equivalent to $G$.  Therefore, $E'$ may be replaced by an
equivalent $T$-algebra with a genuine action of $G$.

Moreover, $E'$ is a Lubin-Tate spectrum, and the Goerss-Hopkins-Miller
theorem implies that the $G$-equivariant identification of $\pi_* E$ with
$\pi_* E'$ lifts to a $G$-equivariant equivalence of $E_\infty$-ring
spectra $E \to E'$.  The induced equivalence of homotopy fixed point
spectra $E^{hG} \to (E')^{hG}$ is then an equivalence of
$E_\infty$-ring spectra $R_{K(2)} \to T$ respecting the identification
on the level of underlying homotopy groups. 
By definition, this means the two solutions $R_{K(2)}$ and $T$ are equivalent, as desired.
\end{proof}

\section{$K(1)$-local obstruction theory}
\label{sec:k1-local-obstruction}

The work in this section closely follows \cite[Section
7]{behrens-construct}, which in turn follows \cite[Section
2.2]{goerss-hopkins}.
For background on $K(1)$-local $E_\infty$-ring spectra the 
reader may consult \cite{hopkins-k1-local} and \cite{laures-splitting}.

The aim is to explain in some generality the construction of 
$K(1)$-local $E_\infty$-ring spectra from purely algebraic data.
We will always be working locally at some fixed prime $p$.

Section \ref{sec:knlocalanss} records some basic properties of the 
$K(n)$-local Adams-Novikov spectral sequence.

Section \ref{subsect:orientedtheories} describes the
structure supported by the $K$-theory of a $K(1)$-local ring spectrum.
The reader should consult 
\cite[Section 7.4]{goerss-moduli-stack} for a more conceptional
explanation of some of these results.

Section \ref{subsect:periodicity} addresses a slightly technical issue
which can best be explained in the context of realizing families of
Landweber exact theories as in \cite{goerss-landweber-families}.  We
need to construct realizations not only over schemes of the
form $\Spec(A)$, but slightly more generally over quotients of affine
schemes by an action of $\mb Z/(p-1)$.  To do so, we will need to
rigidify the formal group that appears, and this is done by means
of the Igusa tower.

The final section \ref{subsect:obstruction} contains the key
constructive result in the $K(1)$-local setting (Theorem
\ref{thm:realizability}) which produces the desired $K(1)$-local
$E_\infty$-ring spectrum and its chromatic attaching map.

We will write $K$ for the complex $K$-theory spectrum.
Following \cite{hovey-etheory}, we write $K^\vee_*(X)$ for $\pi_*
L_{K(1)}(K \smsh{} X)$; this is isomorphic to $\pi_* \holim (K/p^n \smsh{}
X)$ for $K/p^n$ denoting the smash product of $K$ with a mod $p^n$
Moore spectrum.  We write $\comp{K}_p$ for $L_{K(1)} K$ itself.

\subsection{The $K(n)$-local Adams-Novikov spectral sequence}\label{sec:knlocalanss}

\begin{thm}
\label{thm:kn-anss} 
Recall that a prime $p$ is fixed, and choose an integer $n\ge
1$. Let $E$ denote the standard Lubin-Tate spectrum of height $n$,
$K(n)$ the corresponding Morava $K$-theory, and $\mb G_n$ the extended
Morava stabilizer group. For every $K(n)$-local spectrum $X$, there is
a $K(n)$-localized $E$-based Adams-Novikov spectral sequence,
converging strongly to the homotopy of $X$.  If $E_*X$ is a flat
$E_*$-module, this spectral sequence is of the form
\[
E_2^{s,t}=H^s_c(\mb G_n; E^\vee_t X) \Rightarrow \pi_{t-s} X.
\]
Here $E^\vee_*X = \pi_*L_{K(n)}(E\wedge X)$ is the Morava module of
$X$.  This spectral sequence is natural in $X$.
\end{thm}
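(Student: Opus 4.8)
The plan is to obtain the asserted spectral sequence as the $K(n)$-localized $E$-based Adams spectral sequence. First I would form, using the commutative ring structure on $E$, the cosimplicial $K(n)$-local spectrum
\[
Y^\bullet \co [s] \longmapsto L_{K(n)}\left(E^{\smsh{}(s+1)} \smsh{} X\right),
\]
with coface and codegeneracy maps induced by the unit and multiplication of $E$. The associated tower of partial totalizations gives a spectral sequence with
\[
E_1^{s,t} = \pi_t Y^s = E^\vee_t\left(E^{\smsh{}s} \smsh{} X\right)
\]
and differentials $d_r \co E_r^{s,t} \to E_r^{s+r,t+r-1}$. Naturality in $X$ is immediate, since the whole construction is functorial.

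For convergence I would argue that every $K(n)$-local spectrum is $E$-nilpotently complete in the $K(n)$-local category; that is, the homotopy limit of the totalization tower recovers $L_{K(n)} X = X$. This rests on the nilpotence technology of chromatic homotopy theory: $K(n)$-locally the unit $S \to E$ exhibits $E$ as a nilpotent commutative $S$-algebra, so the $E$-Adams tower converges with abutment $\pi_{t-s} X$ (see Devinatz--Hopkins, and Hovey--Strickland for the relevant completeness statements). Strong, rather than merely conditional, convergence then follows from a standard analysis of the tower --- for instance via a horizontal vanishing line on a suitable page, or the arguments of Devinatz and Hopkins.

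The real content is the identification of the $E_2$-page under the hypothesis that $E_* X$ is flat over $E_*$. The key inputs are the computation $E^\vee_* E \cong \Map^c(\mb G_n, E_*)$ of continuous functions on the extended stabilizer group (due, in various forms, to Morava, Strickland, and Hovey), together with its iterates: using flatness of $E_* X$, a Künneth-type argument gives isomorphisms
\[
E^\vee_t\left(E^{\smsh{}s}\smsh{}X\right) \cong \Map^c\left(\mb G_n^{\times s}, E^\vee_t X\right).
\]
These identifications are compatible with the cosimplicial structure, so that the cochain complex of the cosimplicial abelian group $[s] \mapsto \pi_t Y^s$ becomes precisely the complex of inhomogeneous continuous cochains computing $H^*_c(\mb G_n; E^\vee_t X)$. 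Taking cohomotopy yields $E_2^{s,t} = H^s_c(\mb G_n; E^\vee_t X)$, as claimed.

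I expect the main obstacle to lie in the bookkeeping with completions in this last step. The functor $E^\vee_*$ is a completed homotopy-group functor, so one must check that flatness of $E_* X$ propagates through $K(n)$-localization in precisely the form needed for the Künneth and change-of-rings isomorphisms above to hold with no derived-completion or $\Tor$ correction terms, and one must verify their compatibility with the cosimplicial structure maps. This is exactly the analysis carried out in the references above, and I would invoke it rather than redo it here.
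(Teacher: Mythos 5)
Your proposal follows the same route as the paper: build the spectral sequence from the cosimplicial object $[s] \mapsto L_{K(n)}\bigl(E^{\wedge(s+1)} \wedge X\bigr)$, cite Devinatz--Hopkins for its construction and convergence, and then identify the $E_1$-term as continuous cochains on $\mb G_n$ with coefficients in $E^\vee_* X$. The completion bookkeeping you flag as the likely sticking point is exactly where the paper does its work: Landweber exactness of $E$ together with the flatness hypothesis gives that $\pi_*(E^{\wedge(s+1)}\wedge X) \cong E_*E \otimes_{E_*} \cdots \otimes_{E_*} E_*E \otimes_{E_*} E_*X$ is again flat over $E_*$, then Hovey's Theorem~3.1 (identifying $\pi_* L_{K(n)}$ of a spectrum with flat $E$-homology as the $\mathfrak{m}$-adic completion) and Morava's computation produce $\mathrm{Map}_c((\mb G_n)^\bullet, E_*)\,\widehat{\otimes}_{E_*}\, E^\vee_* X$, and finally profiniteness of $\mb G_n$ plus $\mathfrak{m}$-adic completeness of $E^\vee_* X$ let one move the completed tensor product inside to get $\mathrm{Map}_c((\mb G_n)^\bullet, E^\vee_* X)$, with no $\Tor$ corrections.
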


\begin{rmk} 
In this paper, we will only use the case $n=1$.  Here we
have $\mb G_1=\mb Z_p^\times$, $E=\comp{K}_p$ is $p$-adic
$K$-theory, and $E_*X$ is flat if and only if it is $p$-torsion
free.
\end{rmk}

\begin{proof}
The spectral sequence is constructed in \cite[Appendix
A]{devinatzhopkinshomotopyfixedpoint} (with $Z=S^0$) and the
convergence established in \cite[Proposition
A.3]{devinatzhopkinshomotopyfixedpoint}.  We need to determine the
$E_2$-term, which by \cite[Remark
A.9]{devinatzhopkinshomotopyfixedpoint} is the cohomology of the
complex associated to the cosimplicial abelian group
\[
\pi_*L_{K(n)}(E^{(\bullet +1)}\wedge X).
\]
By Landweber exactness of $E$ and our flatness assumption, we know
that the $E_*$-module
\[
\pi_*(E^{(\bullet +1)}\wedge X)\cong
\underbrace{E_*E\otimes_{E_*}\cdots\otimes_{E_*}E_*E}_{\bullet\mbox{
times }}\otimes_{E_*} E_*X
\]
is flat as a left $E_*$-module. Application of \cite[Theorem
3.1]{hovey-somespectral} with $M=E^{(\bullet +1)}\wedge X$, along
with Morava's fundamental computation (\cite[Proposition
2.2]{devinatzhopkinshomotopyfixedpoint}, see in particular page 9),
shows that
\[
\pi_*L_{K(n)}(E^{(\bullet +1)}\wedge X)\cong \mathrm{Map}_c(\mb
  (\mb G_n)^\bullet,E_*) \widehat{\otimes}_{E_*} E^\vee_*X.
\]
In addition, we conclude that $E^\vee_*X \cong
\comp{(E_*X)}_\mathfrak{m}$ for $\mathfrak{m}\subseteq E_*$ the unique
homogeneous maximal ideal. Because $\mb G_n$ is profinite and
$E^\vee_*X$ is $\mathfrak{m}$-adically complete, the canonical
map
\[
\mathrm{Map}_c((\mb G_n)^\bullet,E_*) \widehat{\otimes}_{E_*}E^\vee_*X\to
  \mathrm{Map}_c((\mb G_n)^\bullet,E^\vee_*X)
\]
is an isomorphism.  This identifies the $E_1$-term as the standard
complex computing continous group cohomology, as claimed.
\end{proof}

\begin{rmk}
Further results on the identification of the $E_2$-term of this
spectral sequence may be found in
\cite[Theorem~10.2]{davis-continuousaction} and \cite{davis-torii}.
\end{rmk}

\subsection{$K(1)$-local complex oriented theories}\label{subsect:orientedtheories}

Denote by $MUP$ the $2$-periodic version of the complex bordism
spectrum $MU$. 

\begin{lem}
\label{lem:p-complete}
Suppose $F$ is a complex oriented, homotopy commutative ring spectrum
 such that $F_*$ is $p$-torsion free.  Then there is a natural
 isomorphism
\[
\comp{(K_* \otimes_{MU_*} MU_* MU \otimes_{MU_*} F_*)}_p \to K^\vee_* F.
\]
If $F$ is also even-periodic and equipped with an
  $MUP$-orientation, then there is also a natural isomorphism
\[
\comp{(K_0 \otimes_{MUP_0} MUP_0 MUP \otimes_{MUP_0} F_0)}_p \to K^\vee_{0} F.
\]
\end{lem}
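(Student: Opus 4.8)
The plan is to build the first isomorphism directly from the description of $K^\vee_*F = \pi_* L_{K(1)}(K \smsh{} F)$ and the standard Künneth/base-change machinery for $MU$-module spectra, and then to deduce the even-periodic statement by restriction to degree zero together with the comparison between $MU$ and $MUP$. First I would observe that, since $F$ is complex oriented, the spectrum $K \smsh{} F$ is an $MU$-module (via either factor), and the classical computation gives $\pi_*(MU \smsh{} F) \cong MU_*MU \otimes_{MU_*} F_*$ and hence $\pi_*(K \smsh{} F) \cong K_* \otimes_{MU_*} MU_*MU \otimes_{MU_*} F_*$; the key input here is that $MU_*MU$ is flat over $MU_*$, so there are no $\Tor$ terms and no Künneth spectral sequence to worry about. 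The $p$-torsion-freeness of $F_*$, together with that of $K_*$ and the flatness of $MU_*MU$, guarantees that this tensor product is $p$-torsion free, so that $K\smsh{}F$ has no higher homotopy obstructing $K(1)$-localization beyond $p$-completion.

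Next I would pass to the $K(1)$-localization. Since $K$ is already $E(1)$-local and $p$-torsion free, $L_{K(1)}(K\smsh{}F)$ is computed by $p$-completion: $L_{K(1)}(K \smsh{} F) \simeq (K \smsh{} F)^\wedge_p$, using that $K\smsh{}F$ is a $K$-module (hence $E(1)$-local) so $K(1)$-localization is $p$-completion. This is where I would invoke the description $K^\vee_*(X) \cong \pi_*\holim_n(K/p^n \smsh{}X)$ from the text; for $X = F$ with $\pi_*(K\smsh{}F)$ $p$-torsion free, the $\lim^1$ term vanishes and one gets $K^\vee_*F \cong \comp{(\pi_*(K\smsh{}F))}_p$. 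Combining with the previous paragraph yields the first isomorphism, and naturality is clear since every step (smash product, homotopy groups, $p$-completion) is functorial.

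For the second statement I would use that an $MUP$-orientation on the even-periodic spectrum $F$ makes $K\smsh{}F$ an $MUP$-module, and run the identical argument with $MUP$ in place of $MU$: $\pi_0(MUP \smsh{} F) \cong MUP_0MUP \otimes_{MUP_0} F_0$ using flatness of $MUP_0MUP$ over $MUP_0$, then tensor with $K_0$ over $MUP_0$, then $p$-complete. The only mild subtlety is checking degree zero suffices: since $K$, $MUP$, and $F$ are all even-periodic, the whole graded tensor product is determined by its degree-zero part via the periodicity units, so restricting to degree $0$ loses no information and the displayed formula is simply the degree-$0$ incarnation of the graded one (compatibly, via the map $MU \to MUP$, with the first isomorphism).

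The main obstacle I expect is not any single hard step but rather making the flatness-and-$p$-completeness bookkeeping airtight: one must be careful that the relevant tensor products are taken in the graded sense, that $p$-completion commutes with the (filtered colimit) tensor product up to the stated completion, and that the identification $L_{K(1)}(K\smsh{}F) \simeq (K\smsh{}F)^\wedge_p$ genuinely holds for this $F$ rather than only after smashing with a finite complex. All of these are standard once the $p$-torsion-freeness hypothesis is in hand, which is precisely why it is assumed; so I anticipate the proof to be short, with the substance being the clean invocation of $MU_*MU$- (resp. $MUP_0MUP$-) flatness and the $\holim$ description of $K^\vee_*$.
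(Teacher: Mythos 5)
Your argument is correct and follows essentially the same route as the paper: both use the Conner--Floyd/Landweber identification $K_*F \cong K_* \otimes_{MU_*} MU_*MU \otimes_{MU_*} F_*$ together with $p$-torsion-freeness to reduce $K(1)$-localization to algebraic $p$-completion (the paper packages the completion step by applying Conner--Floyd after smashing with $M(p^n)$ and then taking inverse limits, while you compute $K_*F$ first and then $p$-complete, but these are the same bookkeeping). One small imprecision worth flagging: the step $K_*F \cong K_* \otimes_{MU_*} MU_*F$ rests on Landweber exactness of $K$ (the Conner--Floyd theorem), not merely on flatness of $MU_*MU$ over $MU_*$, which by itself only yields the computation of $MU_*F$.
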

\begin{proof}
As $F$ is complex orientable, the natural map
\[
MU_* MU \otimes_{MU_*} F_* \to MU_* F
\]
is an isomorphism.  In particular, as $MU_* MU$ is a free right
$MU_*$-module, both sides are isomorphic to a direct sum of copies of
$F_*$ and hence are torsion-free. Smashing with a mod $p^n$-Moore
spectrum $M(p^n)$, we obtain an inverse system of isomorphisms
\[
(MU_* MU \otimes_{MU_*} F_*)/p^n \to MU_* (M(p^n) \smsh{} F)
\]
as $n$ ranges over the natural numbers. Applying $K_*\otimes_{MU_*}-$,
the Conner-Floyd isomorphism then implies that the natural map
\[
(K_* \otimes_{MU_*} MU_* MU \otimes_{MU_*} F_*)/p^n \to
(K/p^n)_* (F)
\]
is an isomorphism.  Taking the inverse limit over $n$,
we obtain the desired result because the torsion-freeness of $K_*F$
implies
\[
K^\vee_*F\cong \comp{(K_*F)}_p\cong\lim\limits_n (K/p^n)_*(F).
\]
If $F$ is also even-periodic, the above proof then carries through
with $MU$ replaced by $MUP$.  Taking the term of degree zero gives
  the desired result.
\end{proof}

We refer to \cite[Examples 2.6 and 2.9]{stricklandformal}
for the universal properties of the two canonical ring homomorphisms
$MUP_0 \to MUP_0 MUP$ employed in the proof of the following lemma.

\begin{lem}
\label{lem:indgalois}
Suppose $T$ is a $p$-adic ring equipped with a formal
group law $\mb G$, induced by a map $MUP_0 \to T$, such that the
induced formal group law on $T/p$ is of exact height $1$.  Define 
\[
V = \comp{(K_0 \otimes_{MUP_0} MUP_0 MUP \otimes_{MUP_0} T)}_p.
\]
Then:
\begin{enumerate}
\item The extension $T\to V$ has a canonical structure of an ind-Galois
extension with Galois group $\mb Z_p^\times$.  In particular, $V$
is the $p$-adic completion of a filtered direct system of finite
Galois extensions of $T$.
\label{item:canonicalgalstructure}
\item For all continuous characters
$\chi\co \mb Z^\times_p\to\mb Z^\times_p$, the twisted $\mb
Z_p^\times$-module $V(\chi)$ is cohomologically trivial: for all
closed subgroups $H < \mb Z_p^\times$, the continuous Galois
cohomology $H^s_c(H, V(\chi))$ vanishes for $s > 0$.
\label{item:charactertwist}
\item If $T$ is the $p$-adic completion of a smooth
$\mb Z_p$-algebra, then $V$ is the $p$-adic completion of an
ind-smooth $\mb Z_p$-algebra: $V$ is the $p$-adic completion of a
filtered direct system of smooth $\mb Z_p$-algebras.
\label{item:indsmoothness}
\end{enumerate}
\end{lem}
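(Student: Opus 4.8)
The plan is to identify $V=\comp W_p$ — writing $W=K_0\otimes_{MUP_0}MUP_0MUP\otimes_{MUP_0}T$ — with the $p$-completed ring of functions on a scheme of trivializations of $\mb G$, and then to analyze that scheme. The two unit maps $MUP_0\to MUP_0MUP$ classify the source and target formal group laws of the universal isomorphism; composing one with $MUP_0\to K_0$ (the multiplicative formal group $\widehat{\mb G}_m$) and the other with $MUP_0\to T$ (the formal group $\mb G$) exhibits $W$ as corepresenting, on $T$-algebras $R$, the functor $R\mapsto\mathrm{Iso}_R(\widehat{\mb G}_m,\mb G)$ of isomorphisms of formal groups. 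Hence $V$ corepresents this functor on $p$-adic $T$-algebras, and $\Aut(\widehat{\mb G}_m)=\mb Z_p^\times$ acts on it by precomposition, making $\Spec V$ a pseudo-torsor under $\mb Z_p^\times$ over $\Spec T$.

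For (1) the goal is to promote this pseudo-torsor to an ind-Galois structure, and I expect this to be the main obstacle. First I would reduce modulo $p$: since $\mb G$ has exact height $1$, the reduction $\mb G_0$ over $T/p$ has an associated $p$-divisible group $\mb G_0[p^\infty]$ of height $1$, hence of dimension-zero Cartier dual, so each $\mb G_0[p^n]^\vee$ is finite étale over $T/p$. As $T$ is $p$-adic, étaleness of a finite flat algebra being detected modulo $p$, this forces $\mb G[p^n]^\vee$ to be finite étale over $T$, i.e. $\mb G[p^n]$ to be of multiplicative type. Consequently the scheme $\mathrm{Iso}_T(\mu_{p^n},\mb G[p^n])$ is a finite étale torsor over $T$ under the constant group $\Aut(\mu_{p^n})=(\mb Z/p^n)^\times$; writing $V_n$ for its coordinate ring, $T\to V_n$ is a finite Galois extension in the sense of \cite[Definition 2.3.1]{rognesgalois}, with restriction maps $V_m\hookrightarrow V_n$ refining the projections $(\mb Z/p^n)^\times\twoheadrightarrow(\mb Z/p^m)^\times$. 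Restricting an isomorphism of formal groups to its $p^n$-torsion subgroup schemes defines a natural map $\mathrm{Iso}_T(\widehat{\mb G}_m,\mb G)\to\varprojlim_n\mathrm{Iso}_T(\mu_{p^n},\mb G[p^n])$, which is an isomorphism over $p$-adic test rings because passage to the associated $p$-divisible group is an equivalence between $1$-dimensional formal groups and connected $1$-dimensional $p$-divisible groups over a $p$-adic base (under which $\widehat{\mb G}_m\leftrightarrow\mu_{p^\infty}$). Corepresenting both sides gives a $T$-algebra isomorphism $V\cong\comp{(\varinjlim_n V_n)}_p$, which is exactly the claimed ind-Galois structure with total Galois group $\varprojlim_n(\mb Z/p^n)^\times=\mb Z_p^\times$.

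Granting (1), part (2) should be essentially formal. By a standard property of Galois extensions of commutative rings, each $V_n$ is a finitely generated projective $T[(\mb Z/p^n)^\times]$-module, hence a retract of a finite free one. Fix $k$; the character $\chi$ reduces modulo $p^k$ to a character of $(\mb Z/p^m)^\times$ for $m$ large, so for $n\ge m$ the projection formula identifies $V_n(\chi)/p^k$ with a retract of a finite free $(T/p^k)[(\mb Z/p^n)^\times]$-module, in particular of a finite free $(T/p^k)[H_n]$-module, where $H_n$ is the image of $H$ in $(\mb Z/p^n)^\times$. Shapiro's lemma then gives $H^s(H_n,V_n(\chi)/p^k)=0$ for $s>0$, and passing to the filtered colimit over $n$ (continuous cohomology of the profinite group $H$ commutes with such colimits of discrete modules) yields $H^s_c(H,V(\chi)/p^k)=0$ for all $s>0$ and all $k\ge1$. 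Applying the case $s=1$ to the sequences $0\to V(\chi)/p\to V(\chi)/p^{k+1}\to V(\chi)/p^k\to0$ shows the tower $\{(V(\chi)/p^k)^H\}_k$ has surjective transition maps, so the Milnor sequence for $V(\chi)=\varprojlim_k V(\chi)/p^k$ has vanishing $\varprojlim^1$ term and delivers $H^s_c(H,V(\chi))=0$ for $s>0$.

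Part (3) also follows from (1): each $V_n$ is finite étale over $T=\comp{T'}_p$, and a finite étale algebra over the $p$-completion of a smooth $\mb Z_p$-algebra is the $p$-completion of an ind-smooth $\mb Z_p$-algebra — for instance, pull the corresponding finite étale cover back to the $p$-henselization of $T'$, which is ind-smooth over $\mb Z_p$, and observe that $p$-completion is unaffected. Choosing these lifts compatibly in $n$ exhibits $V\cong\comp{(\varinjlim_n V_n)}_p$ as the $p$-adic completion of an ind-smooth $\mb Z_p$-algebra. To summarize, the only genuinely nontrivial input is the chain in (1): exact height $1$ $\Rightarrow$ $\mb G[p^n]$ is of multiplicative type $\Rightarrow$ the finite étale torsors $\mathrm{Iso}_T(\mu_{p^n},\mb G[p^n])$ exist $\Rightarrow$, via the formal-group/$p$-divisible-group dictionary over $p$-adic bases, the identification $V\cong\comp{(\varinjlim_n V_n)}_p$.
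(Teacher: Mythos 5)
Your parts (1) and (3) follow the paper's argument essentially verbatim: identify $\Spf V$ as $\varprojlim_n\Spf V_n$, with $V_n$ the scheme of level-$p^n$ structures $\mb G[p^n]\cong\mu_{p^n}$, use Cartier duality and exact height $1$ to see each $\Spf V_n$ is a finite \'etale $(\mb Z/p^n)^\times$-torsor over $\Spf T$, and assemble. Your part (2), however, takes a genuinely different route. The paper reduces cohomological triviality of $V(\chi)$ to that of $V(\chi)/p$ by an argument of Tate, observes that $\overline\chi$ is trivial on $U=H\cap(1+p\mb Z_p)$ so that $V(\chi)/p\cong V/p$ as $U$-modules is cohomologically trivial by (1), and then applies Lyndon-Hochschild-Serre to $U\trianglelefteq H$ using that $H/U$ has order prime to $p$. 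You instead work directly at finite levels: you use the projectivity of each $V_n$ over $T[(\mb Z/p^n)^\times]$ to get $H_n$-cohomological triviality of $V_n(\chi)/p^k$ by Shapiro's lemma, then assemble by a colimit over $n$ and a Milnor $\varprojlim^1$ argument over $k$; this avoids any case distinction on $\chi$ and is arguably more uniform. One caveat: your Milnor step relies on the exactness of $0\to V(\chi)/p\to V(\chi)/p^{k+1}\to V(\chi)/p^k\to 0$, which requires $V$ (hence $T$) to be $p$-torsion free — a hypothesis the lemma does not impose as stated, though it holds in the paper's applications. You should either add that hypothesis, or first reduce to the mod-$p$ module as the paper does, after which your finite-level Shapiro argument finishes cleanly.
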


\begin{rmk}
A mod-$p$ version of part 1 of this result, and a generalization
to higher height, appear in \cite[Theorem~A.1 and
  Theorem~A.3]{baker-ellipticadams}.
\end{rmk}

\begin{proof}
The universal properties of $MUP_0$ and $MUP_0 MUP$ imply that the ring
map $T \to V$ is initial among continuous maps of $p$-adic rings
$f\co T \to S$ equipped with an isomorphism $f^* \mb G \to 
\widehat{\mb G}_m$.

Consequently, $V$ is the $p$-completion of the directed limit of the
$T$-algebras $V_n$, where $T \to V_n$ is initial among continuous
maps of $p$-adic rings $f\co T \to S$ equipped with an isomorphism
$f^* \mb G[p^n] \to \mu_{p^n} =\widehat{\mb G}_m[p^n]$.

By Cartier duality \cite[Section 1.2]{tate-pdivisible}, these are
equivalent to isomorphisms $\mb Z/p^n \to f^*\mb G^\vee[p^n]$
between the dual groups, or equivalently elements of $\mb G^\vee$
of order exactly $p^n$.  As $\mb G$ is of height $1$ and dimension
$1$, the group scheme $\mb G^\vee[p^n]$ is \'etale-locally isomorphic
to $\mb Z/p^n$.  Therefore, the subscheme $\Spf(V_n)$ of generators is
a principal $(\mb Z/p^n)^\times$-torsor over $\Spf(T)$.  The maps
$\Spf(V_{n+1}) \to \Spf(V_n)$ are the restrictions of the
multiplication-by-$p$ maps $\mb G^\vee[p^{n+1}] \to \mb G^\vee[p^n]$,
and therefore the pro-system $\Spf(V_n)$ assembles into a principal
torsor for the constant pro-group scheme $\mb Z_p^\times$.

However, by definition this is the same as the map $T \to V$ being
an ind-Galois extension of $p$-adic rings with Galois group $\mb
Z_p^\times$, establishing item~\ref{item:canonicalgalstructure}.

Now fix a continuous character $\chi\co \mb Z_p^\times\to\mb
Z_p^\times$.  Recall that the twisted $\mb Z_p^\times$-module
$V(\chi)=V\otimes_{\mb Z_p}\mb Z_p(\chi)$ has underlying $\mb
Z_p$-module $V$ but $\mb Z_p^\times$-action given by
\[
\alpha\cdot v=\chi(\alpha)(\alpha v)
\]
for all $\alpha\in\mb Z_p^\times,v\in V$.

Arguing as in \cite[Proposition 2.1 and p. 259]{tate}, one reduces the
proof of cohomological triviality for $V(\chi)$ to that for the
module \[\overline{V(\chi)}:=V(\chi)/pV(\chi).\]

If $\chi$ is the trivial character, the result is then immediate from
$V/pV$ being a $\mb Z_p^\times$-Galois extension of $T/p$. For general
$\chi$, we have $\overline{V(\chi)}=(V/pV)(\overline{\chi})$ where
  $\overline{\chi}$ is the composite
\[
\overline{\chi}\co \mb Z_p^\times\stackrel{\chi}{\longrightarrow}\mb
Z_p^\times\longrightarrow (\mb Z/p)^\times.
\]

For the subgroup $U = H \cap (1+p\mb Z_p)\subseteq \mb Z_p^\times$ we
have $\overline{V(\chi)}\cong V/pV$ as $U$-modules, and this is
cohomologically trivial over $U$ by the above. One concludes
cohomological triviality over $H$ by a straightforward
application of the Lyndon-Hochschild-Serre spectral sequence to the
exact sequence $1 \to U \to H \to H/U \to 1$ because $H/U$ has order
prime to $p$.  This shows item~\ref{item:charactertwist}.

To see item~\ref{item:indsmoothness}, suppose in addition that $T$ is
the $p$-adic completion of a smooth $\mb Z_p$-algebra.
For every $n\ge 1$, the fact that $T\subseteq V_n$ is
finite \'etale implies that $V_n$, too, is the $p$-adic
completion of a smooth $\mb Z_p$-algebra.
Hence
\[
\mb Z_p\to\bigcup_{n\ge 1} V_n
\]
is a filtered direct limit (in fact, an increasing union)
of $p$-adic completions of smooth $\mb Z_p$-algebras, 
and we argued previously that $V=\comp{(\cup_{n\ge 1}V_n)}_p$.
\end{proof}

\begin{cor}
\label{cor:smooth-ktheory}
Suppose $F$ is a $K(1)$-local, complex oriented, even-periodic
homotopy commutative ring spectrum with $F_*$ torsion-free such
that $F_0$ is the $p$-adic completion of a smooth $\mb
Z_p$-algebra. Then the ring $K^\vee_0F$ is the $p$-adic completion of
an ind-smooth $\mb Z_p$-algebra, as well as the $p$-adic completion of
an ind-\'etale $F_0$-algebra.
\end{cor}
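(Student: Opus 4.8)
The plan is to deduce the Corollary from Lemmas~\ref{lem:p-complete} and~\ref{lem:indgalois}, applied with $T = F_0$. First I would record the data: because $F$ is complex oriented and even-periodic, a complex orientation together with a unit of $F_2$ upgrades the orientation $MU\to F$ to an $MUP$-orientation, hence to a ring map $MUP_0\to F_0$ classifying a formal group law $\mb G$ over $F_0$. By hypothesis $F_0$ is $p$-adically complete, and since $F_*$ is torsion-free it is $p$-torsion-free. The one hypothesis of Lemma~\ref{lem:indgalois} that is not immediate is that the reduction of $\mb G$ modulo $p$ is of exact height $1$; this is the height-$1$ incarnation of the standing situation of this subsection and is forced by the $K(1)$-locality of $F$. (If $F = 0$ there is nothing to prove.) Granting this, the pair $(F_0,\mb G)$ meets the hypotheses of Lemma~\ref{lem:indgalois}, which produces the ring
\[
V \;=\; \comp{(K_0 \otimes_{MUP_0} MUP_0 MUP \otimes_{MUP_0} F_0)}_p .
\]

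Next I would apply the second isomorphism of Lemma~\ref{lem:p-complete}: since $F$ is even-periodic and $MUP$-oriented and $F_*$ is $p$-torsion-free, it gives a natural --- in particular $F_0$-algebra --- isomorphism $K^\vee_0 F \cong V$. It then remains only to transcribe the structure of $V$ furnished by Lemma~\ref{lem:indgalois}. Part~\ref{item:canonicalgalstructure} exhibits $F_0 \to V$ as an ind-Galois extension with group $\mb Z_p^\times$, and thus as the $p$-adic completion of a filtered direct system of finite Galois --- hence finite étale --- $F_0$-algebras; transported along the isomorphism above, this says $K^\vee_0 F$ is the $p$-adic completion of an ind-étale $F_0$-algebra. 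Part~\ref{item:indsmoothness}, applicable because $F_0$ is by hypothesis the $p$-adic completion of a smooth $\mb Z_p$-algebra, says $V$ is the $p$-adic completion of an ind-smooth $\mb Z_p$-algebra, which is the remaining assertion.

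The only step that is not a direct invocation of the cited results is the claim that $\mb G$ reduced mod $p$ has exact height $1$, and I expect this to be the main --- indeed the only --- obstacle. It should follow from standard facts about $K(1)$-local complex-oriented theories: $F\wedge S/p$ is again $K(1)$-local (localization commutes with smashing against a finite spectrum), hence $E(1)$-local and rationally trivial, so that a $v_1$-self-map acts invertibly on $\pi_*(F\wedge S/p) \cong F_*/p$, which is equivalent to $\mb G \bmod p$ being of height exactly $1$. Everything else is bookkeeping with the identifications supplied by Lemmas~\ref{lem:p-complete} and~\ref{lem:indgalois}.
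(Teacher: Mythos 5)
Your proof is correct and takes essentially the same route as the paper's: identify $K^\vee_0 F$ with the ring $V$ of Lemma~\ref{lem:indgalois} via (the second isomorphism of) Lemma~\ref{lem:p-complete}, then read off both conclusions from parts~\ref{item:canonicalgalstructure} and~\ref{item:indsmoothness} of Lemma~\ref{lem:indgalois} with $T = F_0$. The only difference is cosmetic: you spell out why $\mb G \bmod p$ has exact height $1$ (via a $v_1$-self-map acting invertibly on $F_*/p$ when $F$ is $K(1)$-local), whereas the paper simply asserts this as a consequence of $K(1)$-locality.
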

\begin{proof}
By Lemma~\ref{lem:p-complete}, we have
\[
K^\vee_0 F \cong \comp{(\mb Z \otimes_{MUP_0} MUP_0 MUP
  \otimes_{MUP_0} F_0)}_p.
\]
We obtain a composite map
\[
\mb Z_p \to F_0 \to K^\vee_0 F,
\]
where $F_0$ is the $p$-adic completion of
a smooth $\mb Z_p$-algebra by assumption.  The result now follows from
Lemma~\ref{lem:indgalois} 
with $T=F_0$, remarking that $V = K^\vee_0 F$ by
  Lemma~\ref{lem:p-complete} and that the formal group law is of
strict height $1$ by the assumption that $F$ is
  $K(1)$-local.
\end{proof}

The main point of the next result is to weaken the assumption that $F_*$
be even-periodic. We remark that item~\ref{item:homotopytype} of it is
a special case of \cite[Theorem 1.3]{davis-torii}.

\begin{cor}
\label{cor:fixedpoints}
Suppose $F$ is a $K(1)$-local, complex oriented, homotopy commutative
ring spectrum with associated formal group law $\mb G/F_*$ such that
$F_*$ is $p$-torsion free and concentrated in even degrees.  Then the
following consequences hold.
\begin{enumerate}
\item The spectrum $F$ is $(2p-2)$-periodic.
\label{item:periodic}
\item The map $F_* \to (K^\vee_* F)^{\mb Z_p^\times}$ is an isomorphism.
\label{item:invariants}
\item For all closed subgroups $H$ of $\mb Z_p^\times$, the
    continuous Galois cohomology groups $H^s_c(H, K^\vee_t F)$
    vanish for all $t \in\mb Z$ and all $s > 0$.
\label{item:cohomology}
\item The natural map $F \to \left(L_{K(1)}(K \smsh{}
    F)\right)^{h\mb Z_p^\times}$ is a weak equivalence.
\label{item:homotopytype}
\end{enumerate}
\end{cor}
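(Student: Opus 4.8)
The plan is to establish part~(\ref{item:periodic}) directly, and then to deduce parts~(\ref{item:invariants}), (\ref{item:cohomology}) and~(\ref{item:homotopytype}) from the $K(1)$-local Adams--Novikov spectral sequence of Theorem~\ref{thm:kn-anss} once the cohomological vanishing of part~(\ref{item:cohomology}) is available. For part~(\ref{item:periodic}): the complex orientation furnishes a class $v_1 \in \pi_{2p-2}F$ (well defined modulo $p$), and under the standard orientation $MU \to K$ the associated formal group law is $\widehat{\mb G}_m$, whose Hazewinkel class $v_1$ is, up to sign, a power of the Bott class and hence a unit in $\pi_* K$. Therefore the image of $v_1$ in $K_* F$ is a unit, so the cofibre $F/v_1$ satisfies $K_*(F/v_1) = 0$ and is $E(1)$-acyclic; but $F$ is $K(1)$-local, hence $E(1)$-local, so $F/v_1$ is $E(1)$-local, and being $E(1)$-acyclic it is contractible. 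Thus $v_1\co\Sigma^{2p-2}F \to F$ is an equivalence, $F$ is $(2p-2)$-periodic, and in particular $v_1$ is a unit in $\pi_*F$, so the reduction of $\mb G$ modulo $p$ has constant height exactly~$1$ --- the input required below.

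For the remaining parts: by Lemma~\ref{lem:p-complete}, $K^\vee_* F$ is the $p$-completion of $K_*F$, which is a free $F_*$-module and hence $p$-torsion free, so Theorem~\ref{thm:kn-anss} with $n = 1$ and $E = \comp{K}_p$ supplies a strongly convergent spectral sequence $H^s_c(\mb Z_p^\times; K^\vee_t F) \Rightarrow \pi_{t-s}F$. This is the continuous homotopy fixed point spectral sequence of the natural map $F \to \bigl(L_{K(1)}(K \smsh{} F)\bigr)^{h\mb Z_p^\times}$, and the assertion that this map is an equivalence, part~(\ref{item:homotopytype}), is exactly the convergence statement, following \cite{devinatzhopkinshomotopyfixedpoint} (alternatively it is a special case of \cite{davis-torii}). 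Granting part~(\ref{item:cohomology}), the spectral sequence collapses onto the line $s=0$ and yields the edge isomorphism $\pi_* F \cong (K^\vee_* F)^{\mb Z_p^\times}$ of part~(\ref{item:invariants}). So everything is reduced to part~(\ref{item:cohomology}).

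For part~(\ref{item:cohomology}) I would invoke Lemma~\ref{lem:indgalois} in a graded incarnation. By Lemma~\ref{lem:p-complete} the graded ring $K^\vee_* F = \comp{(K_* \otimes_{MU_*} MU_*MU \otimes_{MU_*} F_*)}_p$ is the ring $V$ of Lemma~\ref{lem:indgalois} attached to the $p$-adically completed graded ring $F_*$ with its height-$1$ formal group law, and the proof of that lemma --- resting on Cartier duality, the \'etale-locality of the $p^n$-torsion of the dual group, and Tate's reduction of cohomological triviality of character twists to the residual prime-to-$p$ situation --- goes through with the grading present. Hence $F_* \to K^\vee_* F$ is ind-Galois with group $\mb Z_p^\times$, and $K^\vee_* F$ together with each of its continuous character twists is cohomologically trivial; taking the degree-$t$ summand gives $H^s_c(H; K^\vee_t F) = 0$ for all $s > 0$, all $t \in \mb Z$, and all closed $H \le \mb Z_p^\times$, which is part~(\ref{item:cohomology}).

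The step I expect to be the main obstacle is this last one. The descent results of \cite{ando-hopkins-strickland} underlying Lemma~\ref{lem:indgalois} are stated for ungraded $p$-adic rings carrying an honest formal group law, but here --- precisely because $F$ is only $(2p-2)$-periodic rather than $2$-periodic --- the ring $\pi_0 F$ carries no formal group and one is forced to work with the graded ring $F_*$ throughout. One must therefore check with care that the Cartier-duality and Galois-descent arguments are insensitive to the grading; equivalently, that the problem can be reduced to the even-periodic case (handled essentially as in Corollary~\ref{cor:smooth-ktheory}, now without the smoothness hypothesis) by adjoining a Bott element. A secondary point is the bookkeeping needed to identify the continuous homotopy fixed point spectral sequence of Theorem~\ref{thm:kn-anss} with the Adams--Novikov spectral sequence compatibly with the unit map, so that the collapse obtained in part~(\ref{item:cohomology}) really does prove parts~(\ref{item:invariants}) and~(\ref{item:homotopytype}).
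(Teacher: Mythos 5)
You have correctly located the central difficulty---the graded ring $F_*$ is not the kind of object Lemma~\ref{lem:indgalois} applies to---but the resolutions you sketch do not close the gap. You assert that the proof of Lemma~\ref{lem:indgalois} ``goes through with the grading present''; the paper's remark immediately following this corollary explains precisely why it does not: the underlying ungraded ring of a $p$-adic $\mb Z$-graded ring is not itself $p$-adic, so the Cartier-duality and Galois-descent arguments of Lemma~\ref{lem:indgalois}, which need an honest $p$-adic ring $T$, cannot be run on $F_*$ as it stands. Your alternative of adjoining a Bott element is essentially the Igusa tower of Section~\ref{subsect:periodicity}; but the relevant items of Lemma~\ref{lem:igusa} are proved by citing ``as in the proof of Corollary~\ref{cor:fixedpoints}, after periodification,'' i.e., they lean on the very step you are trying to supply. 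The device the paper actually uses is the quotient $F_*^{per} := F_*/(v_1 - 1)$, a $\mb Z/(2p-2)$-graded ring. Being a finite direct sum of $p$-complete $F_0$-modules, $F_*^{per}$ \emph{is} a $p$-adic ring; $(K^\vee_* F)^{per}$ is identified as the ring classifying nonstrict isomorphisms to $\widehat{\mb G}_m$ over $p$-adic $F_*^{per}$-algebras (passing from strict to nonstrict isomorphisms corresponds to passing from $MU$ to $MUP$), Lemma~\ref{lem:indgalois} is applied with $T = F_*^{per}$, and one extrapolates back to $F_*$. That quotient, not a Bott element adjunction, is the missing idea.

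Two smaller points. For part~(\ref{item:periodic}), your cofibre argument has a soft spot in the claim that the image of $v_1$ is a unit in $K_*F$: the ring $K_*F \cong K_*\otimes_{MU_*}MU_*MU\otimes_{MU_*}F_*$ is an infinite direct sum of copies of $F_*$ and therefore is not $p$-complete, while $\eta_R(v_1)$ differs from the unit $\eta_L(v_1) = u^{p-1}$ by a multiple of $p$, which in a non-$p$-complete ring need not preserve invertibility. Working with $K(1)_*F$ (where the discrepancy vanishes mod $p$) or with $K^\vee_*F$ repairs this; the paper instead notes directly that $MU\to v_1^{-1}MU$ is a $K(1)$-equivalence, so $F_*$ is an algebra over $v_1^{-1}MU_*$ and hence $(2p-2)$-periodic. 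Your treatment of parts~(\ref{item:invariants}) and~(\ref{item:homotopytype}) via Theorem~\ref{thm:kn-anss} matches the paper once part~(\ref{item:cohomology}) is in hand.
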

\begin{proof}
We note that the orientation of $F$ provides a map $L_{K(1)} MU \to
F$ of ring spectra.  However, the localization map $MU \to v_1^{-1} MU$ is a
$K(1)$-equivalence, and so $F_*$ is an algebra over $v_1^{-1} MU_*$.
This ring is therefore $(2p-2)$-periodic, as in
item~\ref{item:periodic}.
The choice of a periodicity generator $v_1\in F_{2p-2}$
lets us introduce $F_*^{per} = F_*/(v_1 - 1)$, an associated $\mb
Z/(2p-2)$-graded $p$-adic ring.

By Lemma~\ref{lem:p-complete}, the ring
\[
\left(K^\vee_* F\right)^{per} \cong \left(\comp{(K_* \otimes_{MU_*} MU_*MU
  \otimes_{MU_*} F_*)}_p\right)^{per} 
\]
parameterizes strict isomorphisms between the formal group law $\mb G$
and formal group laws of the form $x + y - \beta xy$, for $\beta$
invertible, over $p$-adic $F_*^{per}$-algebras.  As each such formal
group law admits the canonical nonstrict isomorphism $x \mapsto \beta
x$ to the multiplicative formal group law, this is equivalent to
parameterizing nonstrict isomorphisms between the multiplicative
formal group law and the formal group law of $F_*^{per}$.  As such, if
we identify the map $MU_* \to F_*^{per}$ with the corresponding map of
ungraded rings $MUP_0 \to F_*^{per}$, we obtain an isomorphism of
ungraded rings
\[
\left(K^\vee_* F\right)^{per} \cong \comp{(\mb Z \otimes_{MUP_0} MUP_0MUP
  \otimes_{MUP_0} F_*^{per})}_p.
\]
Observe $T:=F_*^{per}$ is a $p$-adic ring because $F$ is $K(1)$-local
and $F_*$ is torsion-free.  Applying Lemma~\ref{lem:indgalois} to
$T=F_*^{per}$ and extrapolating back to $F_*$, we find that $F_*$ is
the ring of $\mb Z_p^\times$-invariants in $K^\vee_* F$
(item~\ref{item:invariants} holds true) and that all twists of
the $\mb Z_p^\times$-module $K^\vee_0 F$ are cohomologically trivial.
Item~\ref{item:cohomology} now follows because for every $t\in\mb Z$,
the $\mb Z_p^\times$-module $K^\vee_tF$ is either zero or the twist of
$K^\vee_0F=V$ by the action of $\mb Z_p^\times$ on the $(t/2)$'th
power of the Bott element.  The final item~\ref{item:homotopytype}
follows from items~\ref{item:invariants} and \ref{item:cohomology} and
the Adams-Novikov spectral sequence in Theorem~\ref{thm:kn-anss}.
\end{proof}

\begin{rmk}
  We note that switching to $(2p-2)$-periodic rings in the previous
  proof is solely to avoid a technical issue. Namely, the
  underlying ring of a $p$-adic $\mb Z$-graded ring is not $p$-adic
  when considered as an ungraded ring, but the analogous
  assertion does hold true for $(2p-2)$-periodic rings.
\end{rmk}

\subsection{$K(1)$-local theories and periodicity}\label{subsect:periodicity}

In order to translate results about even-periodic cohomology theories
into results about general $K(1)$-local complex orientable
theories, we recall the following construction. For every $n\ge 1$, we
have an exact sequence
\[
0 \to (1 + p^n \mb Z_p)\to \mb Z_p^\times \to (\mb Z/p^n)^\times
\to 0
\]
of groups, and $\mb Z_p^\times$ acts by $E_\infty$-ring maps on the
$p$-adic $K$-theory spectrum $\comp K_p$.
\begin{defn}\label{defn:igusacoverofsphere}
For every $n\ge 1$, let 
\[
\mb S(p^n) = (\comp{K}_p)^{h(1+p^n\mb Z_p)}
\]
be the $K(1)$-local $E_\infty$-ring spectrum which is the
continuous homotopy fixed point object of $(1 + p^n \mb Z_p)$
acting on $\comp{K}_p$, constructed by Devinatz and Hopkins
\cite{devinatzhopkinshomotopyfixedpoint}.  This is equipped with an
action of $(\mb Z/p^n)^\times$.  If $F$ is a $K(1)$-local spectrum,
then we define $F(p^n)$ to be the $K(1)$-local smash product
\[
L_{K(1)} (F \smsh{} \mb S(p^n)),
\]
viewed as a $(\mb Z/p^n)^\times$-spectrum.  We refer to the resulting
tower of $K(1)$-local spectra
\[
F\to F(p)\to F(p^2)\to\cdots
\]
as {\em the Igusa tower of $F$}.
\end{defn}

The nomenclature in Definition \ref{defn:igusacoverofsphere} is
motivated by item~\ref{item:4} in the following result.

\begin{lem}
\label{lem:igusa}
Suppose $F$ is a $K(1)$-local, complex oriented,
homotopy commutative ring spectrum with associated formal group law
$\mb G/F_*$ such that $F_*$ is $p$-torsion free and concentrated in
even degrees, and let $n\ge 1$.
\begin{enumerate}
\item The spectrum $F(p^n)$ is an $F$-module spectrum.  If $F$ is an
  $E_\infty$-ring spectrum, then $F(p^n)$ is also canonically an
  $E_\infty$-ring spectrum and the group $(\mb Z/p^n)^\times$ acts on $F(p^n)$
  by maps of $E_\infty$ $F$-algebras.
\label{item:2}
\item The $F_*$-algebra $F(p^n)_*$ is the coordinate ring ${\cal O}_{\mb
    G^\vee[p^n] \setminus {\mb G}^\vee[p^{n-1}]}$ of the elements of exact
  order $p^n$ in the $p$-divisible group $\mb G^\vee$ which is
    Serre dual to $\mb G$.  (See \cite[Section 2.3]{tate-pdivisible}
    for duality of $p$-divisible groups.)
\label{item:4}
\item The canonical map $F \to F(p^n)^{h(\mb Z/p^n)^\times}$ is a
    homotopy equivalence.
\label{item:3}
\item The spectrum $F(p)$ is even-periodic, and has the homotopy type of
  $\bigvee_{k=0}^{p-2} \Sigma^{2k} F$.  Specifically, we have
    $F(p)_*\cong F_*[s]/(s^{p-1} - b)$ as graded $F_*$-algebras for
    a suitable unit $b \in F_{2 - 2p}$.
\label{item:5}
\end{enumerate}
\end{lem}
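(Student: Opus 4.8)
The plan is to take part~(\ref{item:2}) as essentially formal, to put the substantive work into part~(\ref{item:4}), and then to deduce parts~(\ref{item:3}) and~(\ref{item:5}) from the computation of $F(p^n)_*$. Part~(\ref{item:2}) requires nothing beyond functoriality: by Definition~\ref{defn:igusacoverofsphere} the spectrum $\mb S(p^n)$ is a $K(1)$-local $E_\infty$-ring spectrum on which $(\mb Z/p^n)^\times$ acts by $E_\infty$-ring maps, so $F \smsh{} \mb S(p^n)$ is an $E_\infty$-ring spectrum, an $E_\infty$ $F$-algebra via the unit map $F = F \smsh{} \mb S \to F \smsh{} \mb S(p^n)$, with $(\mb Z/p^n)^\times$ acting by $E_\infty$ $F$-algebra maps, and $L_{K(1)}$ of this diagram gives the claim.

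For part~(\ref{item:4}) the plan is to compute $K^\vee_* F(p^n)$ and feed it into the $K(1)$-local Adams--Novikov spectral sequence of Theorem~\ref{thm:kn-anss}. Since $1 + p^n \mb Z_p$ is open in $\mb Z_p^\times$, the Devinatz--Hopkins description of $\mb S(p^n) = (\comp{K}_p)^{h(1 + p^n \mb Z_p)}$ gives a $K(1)$-local equivalence $\comp{K}_p \smsh{} \mb S(p^n) \simeq \prod_{(\mb Z/p^n)^\times} \comp{K}_p$; in particular $\comp{K}_p \smsh{} \mb S(p^n)$ is a finite free $\comp{K}_p$-module, so $K^\vee_* F(p^n) \cong \Map((\mb Z/p^n)^\times, K^\vee_* F)$, with the Morava group $\mb Z_p^\times$ acting diagonally -- through its usual action on $K^\vee_* F$ and by translation through the quotient $\mb Z_p^\times \to (\mb Z/p^n)^\times$. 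As a $\mb Z_p^\times$-module this is the coinduction of the restriction of $K^\vee_* F$ along $1 + p^n \mb Z_p$, so Shapiro's lemma identifies $H^s_c(\mb Z_p^\times; K^\vee_t F(p^n))$ with $H^s_c(1 + p^n \mb Z_p; K^\vee_t F)$; as each $K^\vee_t F$ is zero or a character twist of $V := K^\vee_0 F$, this vanishes for $s > 0$ by Lemma~\ref{lem:indgalois}(\ref{item:charactertwist}) and has $H^0 = V^{1 + p^n \mb Z_p}$. Hence the spectral sequence collapses onto the zero line and $F(p^n)_* \cong V^{1 + p^n \mb Z_p}$. Finally, $V$ is ind-Galois over $F_*^{per}$ with group $\mb Z_p^\times$ by Lemma~\ref{lem:indgalois}(\ref{item:canonicalgalstructure}), so $V^{1 + p^n \mb Z_p}$ is exactly the finite-level ring $V_n$ appearing in its proof, which Cartier duality there identifies with ${\cal O}_{\mb G^\vee[p^n] \setminus \mb G^\vee[p^{n-1}]}$, the coordinate ring of the points of exact order $p^n$ in $\mb G^\vee$; as in Corollary~\ref{cor:fixedpoints} one works throughout with the $(2p-2)$-periodic ring $F_*^{per}$ and then transports back to the $\mb Z$-graded ring $F_*$.

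For part~(\ref{item:3}) I would run the homotopy fixed point spectral sequence $H^s((\mb Z/p^n)^\times; \pi_t F(p^n)) \Rightarrow \pi_{t-s} F(p^n)^{h(\mb Z/p^n)^\times}$. Its coefficients are graded pieces of $V_n$, which by part~(\ref{item:4}) is a $(\mb Z/p^n)^\times$-Galois extension of $F_*^{per}$ -- equivalently, $\Spf(V_n) \to \Spf(F_*^{per})$ is a torsor -- so $H^{>0}((\mb Z/p^n)^\times; V_n)$ vanishes: faithfully flat descent along the finite \'etale map $F_*^{per} \to V_n$ reduces this to the vanishing of the positive cohomology of the coinduced module $V_n \otimes_{F_*^{per}} V_n \cong \prod_{(\mb Z/p^n)^\times} V_n$. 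The spectral sequence therefore collapses to $(V_n)^{(\mb Z/p^n)^\times} = F_*$, and the canonical map $F \to F(p^n)^{h(\mb Z/p^n)^\times}$ realizes this identification, so it is an equivalence. Part~(\ref{item:5}) is the case $n = 1$: here $F(p)_* \cong V_1 = {\cal O}_{\mb G^\vee[p] \setminus 0}$ is the ring of functions on the generators of the \'etale rank-one group scheme $\mb G^\vee[p]$, a torsor over $F_*^{per}$ under $(\mb Z/p)^\times \cong C_{p-1}$, hence a finite \'etale $F_*$-algebra of rank $p-1$. As $p-1$ is invertible in the $p$-local ring $F_*$ and $\mu_{p-1} \subseteq \mb Z_p \subseteq F_0$, the extension is of Kummer type, so $F(p)_* \cong F_*[s]/(s^{p-1} - b)$ for a homogeneous unit $b$ -- in particular free over $F_*$ on $1, s, \dots, s^{p-2}$. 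Identifying the residual Morava $C_{p-1}$-action on the weight-one summand of $V_1$ with a Bott twist (the standard character of $\mb Z_p^\times$ being the one acting on $\beta \in K^\vee_2 F$) and absorbing a power of the periodicity class $v_1 \in F_{2p-2}$, one may arrange $s$ to have degree $-2$, so $b \in F_{2-2p}$; then $s$ is a unit of degree $-2$, so $F(p)$ is even-periodic, and a wedge of $F$-module maps $\Sigma^{2k} F \to F(p)$ for $0 \le k \le p-2$ realizing the basis $1, s, \dots, s^{p-2}$ is a $\pi_*$-isomorphism $\bigvee_{k=0}^{p-2} \Sigma^{2k} F \to F(p)$.

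The main obstacle is part~(\ref{item:4}): one must keep careful track of the grading, since Lemma~\ref{lem:indgalois} is stated for the $(2p-2)$-periodic ring $F_*^{per}$ and not directly for $F_0$, and one must import the $\mb Z_p^\times$-equivariant structure on $K^\vee_* \mb S(p^n)$ from Devinatz--Hopkins in precisely the form that makes the Shapiro reduction to Lemma~\ref{lem:indgalois}(\ref{item:charactertwist}) available. Once $F(p^n)_* \cong V_n$ is in hand, parts~(\ref{item:2}), (\ref{item:3}) and~(\ref{item:5}) are comparatively routine.
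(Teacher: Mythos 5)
Your proof is correct, and for part~(\ref{item:4}) it is arguably \emph{more} careful than the one printed in the paper: the paper's proof of that item opens with ``recall that we have just seen that the ring $F(p^n)_*$ is the subring of $(1+p^n\mb Z_p)$-invariants in the graded ring $K^\vee_*F$,'' a step that in fact rested on a transitivity property of continuous homotopy fixed points which, as the paper's own remark following the proof notes, is not supplied by Devinatz--Hopkins and was only later established by Davis and Torii. Your route --- compute $K^\vee_*F(p^n)\cong \Map((\mb Z/p^n)^\times, K^\vee_*F)$ from the finite-free $K(1)$-local equivalence $\comp{K}_p\smsh{}\mb S(p^n)\simeq\prod_{(\mb Z/p^n)^\times}\comp{K}_p$, recognize this as coinduction from the open subgroup $1+p^n\mb Z_p$, apply Shapiro's lemma together with the cohomological triviality from Lemma~\ref{lem:indgalois}(\ref{item:charactertwist}), and read $F(p^n)_*$ off the degenerate $K(1)$-local Adams--Novikov spectral sequence --- establishes exactly the needed identification $F(p^n)_*\cong (K^\vee_*F)^{1+p^n\mb Z_p}=V_n$ without invoking iterated fixed points, after which the Cartier-duality translation to ${\cal O}_{\mb G^\vee[p^n]\setminus\mb G^\vee[p^{n-1}]}$ agrees with the paper's.

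For parts~(\ref{item:2}) and~(\ref{item:3}) your argument coincides with the paper's (functoriality of the Igusa tower, and the degenerate homotopy fixed point spectral sequence for a Galois extension). For part~(\ref{item:5}) the paper proceeds via the Oort--Tate classification of rank-$p$ group schemes, giving an intrinsic decomposition ${\cal O}_{\mb G^\vee[p]}\cong\bigoplus_{k=0}^{p-1}I^{\otimes k}$ with $I$ invertible and then using the grading to see $I$ is free of degree $-2$; you instead invoke Kummer theory directly for the cyclic $(\mb Z/p)^\times$-Galois extension $F_*\to F(p)_*$. The two are close in spirit, but note that the Kummer form $F_*[s]/(s^{p-1}-b)$ is \emph{not} automatic from cyclicity plus $\mu_{p-1}\subseteq F_0$: one must still know that the weight-one eigenspace (the invertible module $I$) is free over $F_*$, which is precisely the point the paper settles with its grading argument (concentration of $I$ in a single residue mod $2p-2$ forces freeness). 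Your sentence about ``absorbing a power of the periodicity class $v_1$'' is gesturing at this, but it should be made explicit that the eigenspace decomposition plus the degree constraint is what yields a free generator $s$ in degree $-2$; as written this is the one genuine gap, though it is small and is handled by exactly the observation the paper makes.
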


\begin{proof}
Item~\ref{item:2} follows from the fact that the group $\mb Z_p^\times$
acts on the $p$-adic $K$-theory spectrum by $E_\infty$-ring maps,
and therefore $\mb S(p^n)$ inherits a $(\mb Z/p^n)^\times$-equivariant
$E_\infty$-ring structure.

To establish item~\ref{item:4}, recall that we have just seen that the
ring $F(p^n)_*$ is the subring of $(1 + p^n\mb Z_p)$-invariants in the
graded ring $K^\vee_* F$.  As in the proof of
Corollary~\ref{cor:fixedpoints}, after periodification $K^\vee_*
F$ is the universal ring parameterizing nonstrict isomorphisms between
the multiplicative formal group law and $\mb G$ over $p$-adic
$F_*$-algebras. The isomorphism $\mb G[p^n] \to \mu_{p^n}$ obtained by
restricting the universal isomorphism $\mb G\cong \widehat{\mb G}_m$
over $K^\vee_* F$ is invariant under the action of $(1 + p^n \mb Z_p)$
and hence defined over $F(p^n)_*$.  Moreover, the ring parameterizing
isomorphisms $\mb G \to \widehat{\mb G}_m$ which restrict to this
given isomorphism is a principal $\mb (1 + p ^n\mb Z_p)$-torsor over
$F_*$.  The ring of invariants $F(p^n)_*$ is precisely the universal
$F_*$-algebra parameterizing isomorphisms $\mb G[p^n] \to \mu_{p^n}$
of finite flat group schemes, and the grading is recovered through the
action of nonstrict isomorphisms $x \mapsto ux$.

As above, isomorphisms $\mb G[p^n] \to \mu_{p^n}$ are in functorial
bijection with isomorphisms $\mb Z/p^n \to \mb G^\vee[p^n]$ by Cartier
duality \cite[Section 1.2]{tate-pdivisible}.  The finite group scheme
$\mb G^\vee[p^n]$ is \'etale-locally cyclic of order $p^n$, and thus
such an isomorphism is equivalent to a choice of a section of $\mb
G^\vee[p^n]\setminus \mb G^\vee[p^{n-1}]$.  Therefore, the map 
$F_* \to F(p^n)_*$ is a $(\mb Z/p^n)^\times$-Galois extension with
range isomorphic to the coordinate ring ${\cal O}_{\mb
G^\vee[p^n] \setminus \mb G^\vee[p^{n-1}]}$, implying
item~\ref{item:4}.  The fact that this extension is Galois implies
that the homotopy fixed point spectral
\[
H^s((\mb Z/p^n)^\times; F(p^n)_*) \Rightarrow \pi_* F(p^n)^{h(\mb Z/p^n)^\times}
\]
degenerates, and the map $F \to F(p^n)^{h(\mb Z/p^n)^\times}$ is a
weak equivalence, establishing item~\ref{item:3}.

For item~\ref{item:5}, we employ the work of Oort-Tate
\cite{oort-tate} classifying finite flat group schemes of rank $p$
in order to determine the structure of the $F_*$-algebra
$F(p)_*$. The coordinate ring
\[
{\cal O}_{\mb G[p]} \cong F_*\pow{t}/[p]_{\mb G}(t)
\]
is a finite free $F_*$-algebra of rank $p$ with basis $\{t^k | 0
  \leq k < p\}$.  The ring ${\cal O}_{\mb G[p]}$ has an action of
$\mb F_p^\times \subseteq \mb Z_p^\times$ given by $\zeta \cdot t =
[\zeta]_{\mb G}(t)$, and without loss of generality we may replace the
orientation class $t$ by a new generator $s$ of the power
  series ring such that $\zeta \cdot s = \zeta
s$. We note that the action of nonstrict isomorphisms places the
  element $s$ in degree $-2$.

The Oort-Tate classification implies that we have an isomorphism
\[
{\cal O}_{\mb G^\vee[p]} \cong \bigoplus_{k=0}^{p-1} I^{\otimes k}
\]
for $I$ an invertible $F_*$-module, dual to the submodule of
${\cal O}_{\mb G[p]}$ where $\mb F_p^\times \subseteq \mb Z_p^\times$
acts by scalar multiplication.  In particular, the grading implies
that $I$ is concentrated in degrees congruent to $-2$ mod $(2p-2)$,
so we must have $I$ free with generator $s$.  The 
multiplication is induced by the tensor product pairing $I^{\otimes k}
\otimes I^{\otimes l} \to I^{\otimes (k+l)}$ together with a map
$I^{\otimes p} \to I$.  The augmentation to $F_*$ sends $I$ to zero.

We find that the ring ${\cal O}_{\mb G^\vee[p]}$ is of the form
\[
F_*[s]/(s^p - b s),
\]
for some element $b \in F_*$, and the zero section corresponds to the
quotient by the ideal $(s)$.  As this scheme is \'etale, the
Jacobian criterion \cite[Chapter I, Example 3.4]{milne-etale-cohomology} 
implies that $b$ must be a unit in $F_*$.

Therefore, the coordinate ring of the complement of the zero section
is
\[
F(p)_* \cong F_*[s]/(s^{p-1} - b).
\]
As $b$ is invertible, this implies that $F(p)_*$ is $2$-periodic.
Moreover, the $F_*$-module $F(p)_*$ is free with basis $\{1, s,
\ldots, s^{p-2}\}$.  Choosing maps $S^{2k} \to F(p)$ representing
the elements $s^k$, the $F$-module structure from item~\ref{item:2}
gives us maps $F \smsh{} S^{2k} \to F(p)$ which together provide a
weak equivalence $\bigvee \Sigma^{2k} F \to F(p)$ of $F$-modules.
This completes the proof of item~\ref{item:5}.


\end{proof}

\begin{rmk}
Daniel Davis pointed out an error in a previous version which assumed
a transitivity result for homotopy fixed points that is not implied
by the work of Devinatz-Hopkins.  A proof of this result will appear
in forthcoming work of Davis and Torii.
\end{rmk}

\begin{cor}
\label{cor:fgl-lifting}
Suppose $F$ is a $K(1)$-local, complex orientable, homotopy commutative
ring spectrum such that $F_*$ is $p$-torsion free and concentrated in
even degrees.  If there exists a complex orientation $MU \to F(p)$
which, on homotopy groups, factors as $MU_* \to F_*
\subseteq F(p)_*$,
then there exists a complex orientation $MU \to F$ realizing the given
map $MU_* \to F_*$.
\end{cor}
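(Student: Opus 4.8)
The plan is to descend the given orientation one step down the Igusa tower, by applying Corollary~\ref{cor:orientation-descent} to the canonical ring map $f\co F\to F(p)$.

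First I would collect the relevant facts about $F(p)$ from Lemma~\ref{lem:igusa}. Since $F$ is complex orientable, choose any complex orientation of $F$; the remaining hypotheses of Lemma~\ref{lem:igusa} then hold, so item~\ref{item:5} of that lemma gives an isomorphism of graded $F_*$-algebras $F(p)_*\cong F_*[s]/(s^{p-1}-b)$ with $s$ in degree $-2$ and $b$ a unit in $F_{2-2p}$. In particular $F(p)_*$ is concentrated in even degrees and is free of rank $p-1$ over $F_*$, hence faithfully flat over $F_*$. Moreover $F(p)=L_{K(1)}(F\smsh{}\mb S(p))$ is a homotopy commutative ring spectrum (being the $K(1)$-localization of the smash product of $F$ with the $E_\infty$-ring $\mb S(p)$), the map $f$ is a map of ring spectra, $F(p)$ is complex oriented via the hypothesized orientation $g\co MU\to F(p)$, and $F_*$ is torsion-free because $p$-torsion free $p$-local modules are torsion-free.

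These are precisely the hypotheses of Corollary~\ref{cor:orientation-descent} with $R=F$ and $S=F(p)$: both $R_*$ and $S_*$ are concentrated in even degrees, $R_*$ is torsion-free, $f_*\co F_*\to F(p)_*$ is faithfully flat, and by hypothesis the orientation $g$ factors on homotopy groups as $MU_*\to F_*\subseteq F(p)_*$. Applying the corollary produces a complex orientation $\tilde g\co MU\to F$ with $f\circ\tilde g\simeq g$.

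It remains only to see that $\tilde g$ induces the prescribed map on homotopy. But $f_*$ is injective, being faithfully flat, and $f_*\circ\tilde g_*=g_*$, which by hypothesis equals $f_*\circ\iota$ for $\iota\co MU_*\to F_*$ the prescribed map; injectivity of $f_*$ then gives $\tilde g_*=\iota$, as required. I do not anticipate any serious obstacle here: the argument is a direct application of faithfully flat descent of formal group data (Corollary~\ref{cor:orientation-descent}, ultimately Lemma~\ref{lem:fgl-descent}), and the only steps carrying genuine content are the structural assertions of Lemma~\ref{lem:igusa} identifying $F(p)_*$ as an $F_*$-algebra and exhibiting it as even and faithfully flat over $F_*$.
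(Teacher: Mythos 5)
Your proof is correct and takes essentially the same approach as the paper: invoke Lemma~\ref{lem:igusa}, item~\ref{item:5} to see that $F(p)_*$ is faithfully flat over $F_*$, and then apply Corollary~\ref{cor:orientation-descent}. The extra care you take verifying the hypotheses of the descent corollary and checking that the lifted orientation induces the prescribed map on homotopy is all consistent with what the paper leaves implicit.
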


\begin{proof}
Lemma~\ref{lem:igusa}, item~\ref{item:5} implies that $F(p)_*$ is
faithfully flat over $F_*$, and then Corollary
\ref{cor:orientation-descent} shows that the orientation of $F(p)$
lifts to $F$.
\end{proof}

\subsection{Obstruction theory}\label{subsect:obstruction}

For an $E_\infty$-ring spectrum $R$, the $p$-completed $K$-theory
$K^\vee_* R$ has the structure of a $p$-complete ring equipped with an
action of $\mb Z_p^\times$ (the Adams operations $\psi^n$ for $n \in
\mb Z_p^\times$) by $(\comp K_p)_*$-algebra automorphisms, together
with a power operation $\theta$ on $K^\vee_0 R$ extending the
operation of the same name on $R_0$ and commuting with the Adams
operations.  This operation satisfies $\psi^p(x) = x^p + p \theta(x)$.
By contrast with the references, we will refer to such structure as a
$\psi$-$\theta$-algebra, and reserve the term $\theta$-algebra for a
$p$-adic ring equipped with the operation $\theta$ alone (such as the
homotopy of a $K(1)$-local $\comp{K}_p$-algebra).  These power
operations were studied by McClure \cite[Chapter IX]{h-infty}.  See
\cite[Section 2.2]{goerss-hopkins} for more details and the definition
of the relevant version of Andr\'e-Quillen cohomology, denoted here by
$H^*_{\psth\alg}$.

We recall the following consequences of the Goerss-Hopkins obstruction
theory.
\begin{thm}
\label{thm:goerss-hopkins}
  \begin{enumerate}
  \item\label{item:existenceofk1local} 
    Given a graded $\psth$-algebra $A_*$, the obstructions to the
    existence of a $K(1)$-local $E_\infty$-ring spectrum $R$ for which
    there is an isomorphism
\[
K^\vee_* R \cong A_*
\]
    of graded $\psth$-algebras lie in
\[
H^s_{\psth\alg}(A_*/(\comp{K}_p)_*, \Omega^{s-2} A_*), s \geq 3.
\]
    The obstructions to uniqueness lie in
\[
H^s_{\psth\alg}(A_*/(\comp{K}_p)_*, \Omega^{s-1} A_*), s \geq 2.
\]
    \item Given
      $K(1)$-local $E_\infty$-ring spectra $R_1$ and $R_2$ such that
      $K^\vee_* R_2$ is $p$-complete, together with a map of 
      graded $\psth$-algebras
\[
f_*\co K^\vee_* R_1 \to K^\vee_* R_2,
\]
    the obstructions to the existence of a map $f\co R_1 \to R_2$ of
    $E_\infty$-ring spectra inducing $f_*$ on $p$-adic $K$-theory lie in
\[
H^s_{\psth\alg}(K^\vee_*R_1/(\comp{K}_p)_*, \Omega^{s-1} K^\vee_* R_2), s \geq 2.
\]
    The obstructions to uniqueness lie in
\[
H^s_{\psth\alg}(K^\vee_*R_1/(\comp{K}_p)_*, \Omega^{s} K^\vee_* R_2), s \geq 1.
\]
  \item Given $K(1)$-local $E_\infty$-ring spectra $R_1$ and $R_2$ and
    a fixed $E_\infty$-ring map $\phi\co R_1 \to R_2$ such that $K^\vee_*
    R_2$ is $p$-complete, there is a fringed second-quadrant
    spectral sequence with $E_2$-term
\[
E_2^{s,t} = 
\begin{cases}
\Hom_{\psth\alg}(K^\vee_* R_1, K^\vee_* R_2) &\text{for $(s,t)
  = (0,0)$},\\
H^s_{\psth\alg}(K^\vee_*R_1/(\comp{K}_p)_*, \Omega^{t}
K^\vee_* R_2) &\text{for $t > 0$,}
\end{cases}
\]
abutting to
\[
\pi_{t-s}({\Map_{E_\infty}}(R_1, R_2), \phi).
\]
  \end{enumerate}
\end{thm}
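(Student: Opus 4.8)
The plan is to obtain all three parts as instances of the Goerss--Hopkins obstruction theory \cite{goerss-hopkins}, run in the model category of $K(1)$-local $E_\infty$-ring spectra exactly as in \cite[Section~7]{behrens-construct}. The homology-type functor fed into the machine is $R\mapsto K^\vee_* R$, and the first task is to check that it is an \emph{adapted} homology theory in the Goerss--Hopkins sense, taking values in the algebraic category of graded $\psth$-algebras over $(\comp{K}_p)_*$. Concretely this amounts to verifying that $K^\vee_*$ sends free $K(1)$-local $E_\infty$-ring spectra to free graded $\psth$-algebras: if $W=\bigvee_\alpha S^{n_\alpha}$ and $\mathbb{P}(W)$ denotes the free $K(1)$-local $E_\infty$-ring spectrum it generates, then the natural map from the free graded $\psth$-algebra on generators $x_\alpha$ in degrees $n_\alpha$ to $K^\vee_*\mathbb{P}(W)$ is an isomorphism. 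This is the one genuinely non-formal input; it rests on McClure's computation of the $K$-homology of extended powers $(E\Sigma_k)_+\wedge_{\Sigma_k}X^{\wedge k}$ \cite[Chapter~IX]{h-infty} together with the compatibility of $K(1)$-localization with the relevant coproducts and filtered colimits, and it is precisely what is established in \cite[Section~7]{behrens-construct}. I would cite that verification rather than reproduce it.

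Granting this, the category of graded $\psth$-algebras over $(\comp{K}_p)_*$ carries the structure the machinery needs: a Quillen model structure on its simplicial objects, functorial free resolutions, and an Andr\'e--Quillen cohomology $H^*_{\psth\alg}(-/(\comp{K}_p)_*,-)$ with coefficients in twisted modules $\Omega^t(-)$. The Goerss--Hopkins package then produces, on the one hand, a moduli-space obstruction theory for realizing a graded $\psth$-algebra $A_*$: the space of realizations is built as a limit of a Postnikov-type tower whose layers are Eilenberg--MacLane objects, the obstructions to non-emptiness lying in $H^s_{\psth\alg}(A_*/(\comp{K}_p)_*,\Omega^{s-2}A_*)$ for $s\ge 3$ and the obstructions to two realizations agreeing lying in $H^s_{\psth\alg}(A_*/(\comp{K}_p)_*,\Omega^{s-1}A_*)$ for $s\ge 2$; the shift by two for existence reflects that a point of the moduli space packages both the $E_\infty$-structure and the passage from maps to objects. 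This is the first part. On the other hand, for a fixed $E_\infty$-map $\phi\co R_1\to R_2$ it produces the fringed second-quadrant spectral sequence of the third part, with $E_2^{0,0}=\Hom_{\psth\alg}(K^\vee_* R_1,K^\vee_* R_2)$ and $E_2^{s,t}=H^s_{\psth\alg}(K^\vee_*R_1/(\comp{K}_p)_*,\Omega^t K^\vee_* R_2)$ for $t>0$, abutting to $\pi_{t-s}(\Map_{E_\infty}(R_1,R_2),\phi)$; the hypothesis that $K^\vee_* R_2$ be $p$-complete is what makes the $E_2$-identification and the conditional convergence go through.

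The second part is then read off from this spectral sequence. An element $f_*\in E_2^{0,0}$ lies in the image of $\pi_0\Map_{E_\infty}(R_1,R_2)$ exactly when it survives all differentials; since $d_r$ out of $E_r^{0,0}$ lands in $E_r^{r,r-1}$, the successive obstructions to lifting $f_*$ to an $E_\infty$-map lie in $H^s_{\psth\alg}(K^\vee_*R_1/(\comp{K}_p)_*,\Omega^{s-1}K^\vee_* R_2)$ for $s\ge 2$, and two $E_\infty$-maps inducing the same $f_*$ differ by a class detected in the $\pi_1$-part of the spectral sequence, giving uniqueness obstructions in $H^s_{\psth\alg}(K^\vee_*R_1/(\comp{K}_p)_*,\Omega^{s}K^\vee_* R_2)$ for $s\ge 1$; the uniqueness clause of the first part is the analogous $\pi_1$-shadow of the moduli tower. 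The work that actually has to be done, beyond invoking \cite{goerss-hopkins} and \cite{behrens-construct}, is (i) pinning down the free-algebra computation in the $K(1)$-local setting as above and (ii) bookkeeping the degree shifts and the low-dimensional fringe so that the obstruction groups appear with precisely the stated indexing. I expect (i) to be the real content, with everything else being the standard formalism applied verbatim; but since (i) is already in the literature, the proof can remain short.
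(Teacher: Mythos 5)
Your proposal is correct and takes essentially the same route as the paper: both ultimately invoke the Goerss--Hopkins obstruction theory with $K^\vee_*$ as the adapted homology theory, with the verification of adaptedness (McClure's extended-power computations) delegated to \cite[Section~7]{behrens-construct}. The one place you diverge is in presenting item~2 as a corollary of the mapping-space spectral sequence in item~3, reading off the existence obstructions from the differentials $d_r\co E_r^{0,0}\to E_r^{r,r-1}$ and the uniqueness obstructions from the higher-filtration terms $E_\infty^{s,s}$, $s\ge 1$; the paper instead cites the two statements independently as \cite[Corollary~2.4.15]{goerss-hopkins} and \cite[Theorem~2.4.14]{goerss-hopkins}. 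Your derivation of item~2 from item~3 is valid and is in fact how the cited corollary is proved, so this is a presentational rather than substantive difference. One small caution: your phrase that two lifts of $f_*$ ``differ by a class detected in the $\pi_1$-part'' is potentially misleading --- the uniqueness obstructions live on the $t-s=0$ diagonal at positive filtration, i.e.\ in $E_2^{s,s}=H^s_{\psth\alg}(\cdots,\Omega^s\cdots)$, which is the $\pi_0$-line, not $\pi_1$; the indexing you wrote down is nevertheless correct.
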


In order, the references for the above facts are as follows:
\begin{enumerate}
\item This is an application of \cite[Corollary
  5.9]{goerss-hopkins-published} with the basic homology theory $E$
  there being chosen as $p$-complete $K$-theory.  To identify the
  resulting obstruction groups as $\psi$-$\theta$-cohomology, one
  invokes \cite[Sections 2.2 and 2.4]{goerss-hopkins}.
\item \cite[Corollary 2.4.15]{goerss-hopkins}, with $Z=S^0$.
\item \cite[Theorem 2.4.14]{goerss-hopkins}, with $Z=S^0$.
\end{enumerate}

The next result isolates purely algebraic assumptions which will
  demonstrate vanishing of many relevant Andr\'e-Quillen
cohomology groups.

\begin{lem}
\label{lem:psth-vanishing}
Suppose that $A_* \to B_*$ is a map of graded, $p$-adic, even-periodic
  $\psth$-algebras concentrated in even degrees such that
  \begin{itemize}
  \item $A_{0}$ is the $p$-adic completion of an ind-\'etale extension
    of the fixed subring $(A_{0})^{\mb Z_p^\times}
\subseteq A_0$,
  \item $(A_0)^{\mb Z_p^\times}$ is the $p$-adic completion of a smooth $\mb
    Z_p$-algebra, and
  \item the continuous group cohomology $H^s_c(\mb Z_p^\times,
    \Omega^t B_*)$ vanishes for $s > 0$ and all $t\in \mb Z$.
  \end{itemize}
Then the $\psth$-algebra cohomology
\[
H^s_{\psth\alg}(A_*/(\comp{K}_p)_*,\Omega^t B_*)
\]
vanishes for $s \geq 2$ or $t$ odd.

\end{lem}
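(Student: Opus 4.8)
The plan is to peel off the two structures carried by a $\psth$-algebra --- the Adams ($\mb Z_p^\times$) action and the operation $\theta$ --- one at a time, reducing the displayed Andr\'e--Quillen cohomology first to an equivariant $\theta$-algebra cohomology and then to the continuous cohomology of $\mb Z_p^\times$, where the third hypothesis applies verbatim. First dispose of odd $t$: free $\psth$-algebras on even-degree generators are concentrated in even internal degrees (the ring operations, $\theta$, and the $\psi^k$ all preserve the even part, and $(\comp K_p)_*$ is even), so a free simplicial resolution of $A_*$ is levelwise even and the $\psth$-cotangent complex $L^{\psth}_{A_*/(\comp K_p)_*}$ lives in even degrees. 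Since $B_*$ is even, $\Omega^t B_*$ is concentrated in odd degrees for odd $t$, and the cohomology, built from degree-preserving maps out of $L^{\psth}_{A_*/(\comp K_p)_*}$, vanishes for every $s$. So assume $t=2j$; then $\Omega^{2j}B_*$ is $B_*$ with its $\mb Z_p^\times$-action twisted by a power of the Bott character $\chi$.

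Next I would use the Galois structure to split the $\psi$- and $\theta$-directions. Put $R=(A_0)^{\mb Z_p^\times}$; by the first two hypotheses $R$ is the $p$-adic completion of a smooth $\mb Z_p$-algebra and $A_0$ is the $p$-adic completion of an ind-\'etale $R$-algebra. Since \'etale extensions carry a unique compatible lift of Frobenius, $L^{\theta}_{A_0/R}=0$, so by transitivity $L^{\theta}_{A_0/\mb Z_p}\simeq L^{\theta}_{R/\mb Z_p}\otimes_R A_0$; and $L^{\theta}_{R/\mb Z_p}$ is a finitely generated projective $R$-module concentrated in degree $0$ --- the $\theta$-algebra analogue of smoothness, which reduces by \'etale localization to the polynomial case, where the module of $\theta$-differentials is free of rank the relative dimension. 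Hence $A_0$ is ``$\theta$-smooth'' over $\mb Z_p$: the groups $H^q_{\theta\alg}(A_0/\mb Z_p,-)$ vanish for $q\ge 1$. Following the framework of \cite[Section 2.2]{goerss-hopkins} and \cite[Section 7]{behrens-construct}, the cohomology $H^s_{\psth\alg}(A_*/(\comp K_p)_*,\Omega^{2j}B_*)$ is the abutment of a Grothendieck-type composite-functor spectral sequence
\[
H^p_c\bigl(\mb Z_p^\times,\,H^q_{\theta\alg}(A_0/\mb Z_p,\,B_0(\chi^j))\bigr)\ \Longrightarrow\ H^{p+q}_{\psth\alg}\bigl(A_*/(\comp K_p)_*,\,\Omega^{2j}B_*\bigr),
\]
combining the continuous cohomology of the Adams action with the $\theta$-algebra cohomology over $\mb Z_p$ of the underlying $\theta$-algebra. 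By the vanishing just noted this collapses to the row $q=0$, leaving $H^s_{\psth\alg}(A_*/(\comp K_p)_*,\Omega^{2j}B_*)\cong H^s_c\bigl(\mb Z_p^\times,\,\Hom_{A_0}(\Omega^{\theta}_{A_0/\mb Z_p},B_0(\chi^j))\bigr)$.

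To conclude, note $\Omega^{\theta}_{A_0/\mb Z_p}\cong\Omega^{\theta}_{R/\mb Z_p}\otimes_R A_0$, so by adjunction the coefficient module is $\Hom_R(\Omega^{\theta}_{R/\mb Z_p},B_0(\chi^j))\cong(\Omega^{\theta}_{R/\mb Z_p})^{\vee}\otimes_R B_0(\chi^j)$, with $(\Omega^{\theta}_{R/\mb Z_p})^{\vee}$ finitely generated projective over $R$ and carrying the trivial $\mb Z_p^\times$-action. Flat base change for continuous cohomology then gives $H^s_c(\mb Z_p^\times,-)\cong(\Omega^{\theta}_{R/\mb Z_p})^{\vee}\otimes_R H^s_c(\mb Z_p^\times,B_0(\chi^j))$, and $H^s_c(\mb Z_p^\times,B_0(\chi^j))$ is a graded piece of $H^s_c(\mb Z_p^\times,\Omega^{2j}B_*)$, which vanishes for $s>0$ by the third hypothesis. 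This yields the claimed vanishing for $s\ge 2$ (in fact for all $s\ge 1$ when $t$ is even), completing the argument.

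I expect the genuine work to be the middle step: making precise the reduction from $\psth$-algebra cohomology over $(\comp K_p)_*$ to $\mb Z_p^\times$-equivariant $\theta$-algebra cohomology over $\mb Z_p$ --- identifying the correct adjunction, tracking the Bott twist on coefficients, and producing the composite-functor spectral sequence --- together with the supporting input that the $\theta$-cotangent complex of a smooth $\mb Z_p$-algebra is a projective module concentrated in degree $0$. The parity argument, the transitivity computation, and the flat base change are all formal.
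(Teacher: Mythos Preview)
Your argument has a real gap at the ``$\theta$-smoothness'' step. From the fact that $L^{\theta}_{R/\mb Z_p}$ (or $\Omega_{A_0/\mb Z_p}$) is a finitely generated projective $R$-module you conclude that $H^q_{\theta\alg}(A_0/\mb Z_p,-)$ vanishes for all $q\ge 1$. But $\theta$-algebra Andr\'e--Quillen cohomology is $\Ext$ in the category of $A_0[\theta]$-modules, not $A_0$-modules, and a module that is projective over $A_0$ generically has $A_0[\theta]$-projective dimension $1$, not $0$ (the twisted polynomial ring $A_0[\theta]$ has homological dimension one over $A_0$). So your vanishing only holds for $q\ge 2$; this is exactly why your conclusion overshoots the Lemma, claiming $s\ge 1$ rather than $s\ge 2$. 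With the $q=1$ row present, your spectral sequence contributes terms $H^{s-1}_c\bigl(\mb Z_p^\times,\ \Ext^1_{A_0[\theta]}(\Omega_{A_0/\mb Z_p},\,B_0(\chi^j))\bigr)$, and nothing in the hypotheses controls the continuous cohomology of this $\Ext^1$ --- the third bullet speaks only about $\Omega^t B_*$ itself.

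The paper avoids this by running the composite-functor spectral sequence in the \emph{other} order. First it invokes \cite[(2.4.9)]{goerss-hopkins} to replace $H^s_{\psth\alg}$ by $\Ext^s$ in $\psth$-modules, then uses even-periodicity and the ind-\'etale hypothesis to pass from $A_0$ down to $T_0=(A_0)^{\mb Z_p^\times}$, so that the domain $\Omega_{T_0/\mb Z_p}$ carries the \emph{trivial} $\mb Z_p^\times$-action. This gives the factorization $\Hom_{\psth}=\Hom_{T_0[\theta]}\bigl(\Omega_{T_0/\mb Z_p},\,(-)^{\mb Z_p^\times}\bigr)$ and hence a Grothendieck spectral sequence
\[
E_2^{s,u}=\Ext^s_{T_0[\theta]}\bigl(\Omega_{T_0/\mb Z_p},\,H^u_c(\mb Z_p^\times,(\Omega^t B)_0)\bigr)\ \Longrightarrow\ \Ext^{s+u}_{Mod^{\psth}_{T_0}}\bigl(\Omega_{T_0/\mb Z_p},(\Omega^t B)_0\bigr),
\]
with group cohomology on the \emph{inside}. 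Now the third hypothesis kills all $u>0$ outright, leaving $\Ext^s_{T_0[\theta]}(\Omega_{T_0/\mb Z_p},(\Omega^t B)_0^{\mb Z_p^\times})$, which vanishes for $s\ge 2$ because $\Omega_{T_0/\mb Z_p}$ is $T_0$-projective and hence admits a length-one $T_0[\theta]$-projective resolution. The odd-$t$ case is then immediate from $(\Omega^tB)_0=B_{-t}=0$, so your separate parity argument via even free resolutions is unnecessary.
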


\begin{proof}
To aid readability, define $T_* = (A_*)^{\mb Z_p^\times}$.

Using the smoothness assumption, we can apply \cite[Equation
(2.4.9)]{goerss-hopkins}, with $M=\Omega^tB_*$, to see that
\[
H^s_{\psth\alg}(A_*/(\comp{K}_p)_* ,\Omega^t B_*)\cong
Ext^s_{Mod_{A_*}^{\psth}}(\Omega_{A_*/(\comp{K}_p)_* 
}, \Omega^t B_*).
\]
As $A_*$ is even-periodic, sending a graded module $M_*$ to
  $(M_0, M_1)$ is an equivalence of categories between the category of
  graded $A_*$-$\psth$ modules and the category of pairs of
  $A_0$-$\psth$-modules.  Therefore, there is an isomorphism of
$\Ext$ groups
\[
Ext^s_{Mod_{A_*}^{\psth}}(\Omega_{A_*/(\comp{K}_p)_*}, \Omega^t B_*) \cong
Ext^s_{Mod_{A_0}^{\psth}}(\Omega_{A_0/\mb Z_p}, (\Omega^t B)_0)
\]
because $A_1$ is trivial.

As $A_0$ is the $p$-adic completion of an ind-\'etale $T_0$-algebra,
there is an isomorphism of modules of K\"ahler differentials
$\Omega_{A_0/\mb Z_p} \cong A_0 \otimes_{T_0} \Omega_{T_0/\mb Z_p}$
that respects the operation $\theta$ and the operations $\psi^k$
(the latter operations acting trivially on $T_0$).  Since $A_0$
is flat over $T_0$, we get an isomorphism
\[
\Ext^s_{Mod_{A_0}^{\psth}}(\Omega_{A_0/\mb Z_p},(\Omega^t B)_0) \cong
  Ext^s_{Mod_{T_0}^{\psth}}(\Omega_{T_0/\mb Z_p}, (\Omega^t B)_0).
\]
The right-hand Ext-group is computed in the category of modules over
$T_0$ equipped with a continuous $\mb Z_p^\times$-action and an
operator $\theta$, and the $\mb Z_p^\times$-action is trivial on the
domain. This leads to a composite functor spectral sequence:
\[
E_2^{s,u}=Ext^s_{T_0[\theta]}(\Omega_{T_0/\mb Z_p}, H^u_c(\mb
Z_p^\times,(\Omega^t B)_0))\Rightarrow
Ext^{s+u}_{Mod_{T_0}^{\psth}}(\Omega_{T_0/\mb Z_p},(\Omega^t B)_0)
\]
By assumption, the group cohomology degenerates, and we obtain the
result
\[
H^s_{\psth\alg}(A_*/(\comp{K}_p)_*,\Omega^t B_*)\cong
Ext^s_{T_0[\theta]}(\Omega_{T_0/\mb Z_p}, (\Omega^t B)_0^{\mb Z_p^\times}). 
\]
By smoothness, $\Omega_{T_0/\mb Z_p}$ is a projective
$T_0$-module, and hence these Ext-groups vanish for $s\ge 2$. They
also vanish for $t$ odd because $B_*$ is concentrated in even degrees.
\end{proof}

Direct application of Lemma~\ref{lem:psth-vanishing} with $A_*=B_*$ and
Theorem~\ref{thm:goerss-hopkins},
item~\ref{item:existenceofk1local}, yields the following
existence and uniqueness result for $K(1)$-local $E_\infty$-ring
spectra.

\begin{cor}
\label{cor:psth-realization}
Suppose that $A_*$ is an even-periodic graded $p$-adic $\psth$-algebra
concentrated in even degrees such that 
  \begin{itemize}
  \item $A_0$ is the $p$-adic completion of an ind-\'etale extension
    of $(A_0)^{\mb Z_p^\times}$,
  \item $(A_0)^{\mb Z_p^\times}$ is the $p$-adic completion of a
    smooth $\mb Z_p$-algebra, and
  \item the continuous group cohomology $H^s_c(\mb Z_p^\times,
    \Omega^t A_*)$ vanishes for $s > 0$ and all $t\in\mb Z$.
  \end{itemize}
Then there exists a $K(1)$-local $E_\infty$-ring spectrum $R$ such that
$K^\vee_* R \cong A_*$ as $\psth$-algebras, and any two such are weakly
equivalent.
\end{cor}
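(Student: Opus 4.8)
The statement is announced as a direct consequence of Lemma~\ref{lem:psth-vanishing} and the Goerss--Hopkins obstruction theory, so the plan is to check that the hypotheses line up and then quote the relevant vanishing of obstruction groups.

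First I would apply Lemma~\ref{lem:psth-vanishing} in the degenerate case where $A_* \to B_*$ is the identity map of $A_*$. With this choice, the three bulleted hypotheses of that lemma become precisely the three bulleted hypotheses of the present corollary: $A_0$ is the $p$-adic completion of an ind-\'etale extension of $T_0 := (A_0)^{\mb Z_p^\times}$, the ring $T_0$ is the $p$-adic completion of a smooth $\mb Z_p$-algebra, and the continuous group cohomology $H^s_c(\mb Z_p^\times, \Omega^t A_*)$ vanishes for $s > 0$ and all $t \in \mb Z$. The conclusion of Lemma~\ref{lem:psth-vanishing} is then that
\[
H^s_{\psth\alg}(A_*/(\comp{K}_p)_*, \Omega^t A_*) = 0 \quad \text{whenever } s \geq 2 \text{ or } t \text{ is odd}.
\]

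Next I would feed this into Theorem~\ref{thm:goerss-hopkins}, item~\ref{item:existenceofk1local}, taking the input graded $\psth$-algebra there to be $A_*$. The obstructions to the existence of a $K(1)$-local $E_\infty$-ring spectrum $R$ with $K^\vee_* R \cong A_*$ as graded $\psth$-algebras lie in $H^s_{\psth\alg}(A_*/(\comp{K}_p)_*, \Omega^{s-2} A_*)$ for $s \geq 3$, and the obstructions to uniqueness lie in $H^s_{\psth\alg}(A_*/(\comp{K}_p)_*, \Omega^{s-1} A_*)$ for $s \geq 2$. In either range the cohomological degree $s$ is at least $2$, so each of these groups vanishes by the display above. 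Hence a spectrum $R$ realizing $A_*$ exists, and any two such are weakly equivalent, as claimed.

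The only thing that needs care is bookkeeping rather than mathematics: one must confirm that the even-periodicity and even-concentration hypotheses on $A_*$, and the identification in Theorem~\ref{thm:goerss-hopkins} of the basic homology theory as $p$-complete $K$-theory $\comp{K}_p$, are matched consistently across the corollary, Lemma~\ref{lem:psth-vanishing}, and Theorem~\ref{thm:goerss-hopkins}. There is no genuine obstacle here; the substantive content of the argument has already been carried out in the proof of Lemma~\ref{lem:psth-vanishing}, and this corollary merely packages it together with the obstruction-theoretic input.
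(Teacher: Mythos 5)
Your proof is correct and matches the paper's own argument exactly: the paper states this corollary as a direct application of Lemma~\ref{lem:psth-vanishing} with $A_* = B_*$ together with Theorem~\ref{thm:goerss-hopkins}, item~\ref{item:existenceofk1local}, which is precisely what you do.
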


\begin{rmk}
With the results from Section~\ref{subsect:orientedtheories},
the previous corollary establishes an equivalence between a 
homotopy category of certain $K(1)$-local $E_\infty$-ring spectra
and a certain class of $\psth$-algebras.
\end{rmk}

Next, we start to study $E_\infty$-maps through purely algebraic data.

\begin{prop}
\label{prop:psth-mapping}
Suppose that $F$ and $F'$ are $K(1)$-local, even-periodic, complex
oriented $E_\infty$-ring spectra such that the rings $F_0$ and
$F'_0$ are $p$-torsion free and such that $F_0$ is the
$p$-adic completion of a smooth $\mb Z_p$-algebra.  Then the
canonical map
\[
[f] \mapsto K_0^\vee(f)\co \pi_0 \Map_{E_\infty}(F,F') \to
Hom_{\psth\alg}(K_0^\vee F, K_0^\vee F'),
\]
from the connected components of the derived $E_\infty$-mapping
space to the set of $\psth$-algebra maps, is bijective.
The canonical map
\[
[f]\mapsto \pi_0(f)\co \pi_0\Map_{E_\infty}(F,F')\to
Hom_{\theta\mbox{-}alg}(\pi_0 F,\pi_0F')
\]
is injective.  For any basepoint $f$ of the mapping
  space, the fundamental group $\pi_1 (\Map_{E_\infty}(F,F'),f)$ is
  abelian.
\end{prop}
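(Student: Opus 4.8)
The plan is to run the Goerss--Hopkins obstruction theory of Theorem~\ref{thm:goerss-hopkins} on the pair $(F,F')$ and to annihilate all the relevant obstruction and spectral-sequence groups using Lemma~\ref{lem:psth-vanishing}. The one substantive step is therefore to check that the pair of graded $\psth$-algebras $(A_*,B_*)=(K^\vee_* F,K^\vee_* F')$ satisfies the hypotheses of that lemma. By Corollary~\ref{cor:fixedpoints}(\ref{item:invariants}) the $\mb Z_p^\times$-invariant subring of $K^\vee_0 F$ is $F_0$; since $F$ is $K(1)$-local, even-periodic, complex oriented with $F_*$ $p$-torsion free and $F_0$ the $p$-adic completion of a smooth $\mb Z_p$-algebra, Corollary~\ref{cor:smooth-ktheory} shows $K^\vee_0 F$ is the $p$-adic completion of an ind-\'etale $F_0$-algebra, which with the smoothness of $F_0$ gives the first two hypotheses of Lemma~\ref{lem:psth-vanishing}. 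The third hypothesis, vanishing of $H^s_c(\mb Z_p^\times,\Omega^t K^\vee_* F')$ for $s>0$ and all $t$, is Corollary~\ref{cor:fixedpoints}(\ref{item:cohomology}) applied to $F'$ (whose hypotheses hold since $F'$ is $K(1)$-local, complex oriented, even-periodic with $F'_0$ $p$-torsion free), read off one graded piece at a time. Lemma~\ref{lem:psth-vanishing} then yields
\[
H^s_{\psth\alg}(K^\vee_* F/(\comp K_p)_*,\Omega^t K^\vee_* F')=0\qquad\text{whenever } s\ge 2 \text{ or } t\text{ is odd}.
\]

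For the first statement, note $K^\vee_* F$ is $2$-periodic and concentrated in even degrees, hence is $K^\vee_0 F\otimes_{\mb Z_p}(\comp K_p)_*$ with $(\comp K_p)_*$-linear $\psth$-structure; so graded $\psth$-algebra maps $K^\vee_* F\to K^\vee_* F'$ correspond bijectively to $\psth$-algebra maps $K^\vee_0 F\to K^\vee_0 F'$, and under this correspondence $[f]\mapsto K^\vee_0(f)$ becomes the map from homotopy classes of $E_\infty$-maps $F\to F'$ to graded $\psth$-maps whose surjectivity and injectivity are governed by Theorem~\ref{thm:goerss-hopkins}(2). That theorem puts the obstructions to realizing a given such map by an $E_\infty$-map in $H^s_{\psth\alg}(K^\vee_* F/(\comp K_p)_*,\Omega^{s-1}K^\vee_* F')$ for $s\ge 2$, all of which vanish by the display, and the obstructions to uniqueness of the realization in $H^s_{\psth\alg}(K^\vee_* F/(\comp K_p)_*,\Omega^{s}K^\vee_* F')$ for $s\ge 1$, which vanish for $s\ge 2$ and for $s=1$ because then $t=1$ is odd. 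Hence $[f]\mapsto K^\vee_0(f)$ is a bijection. The second statement follows: $\pi_0(f)$ is the restriction of $K^\vee_0(f)$ along the inclusion $F_0\cong(K^\vee_0 F)^{\mb Z_p^\times}\hookrightarrow K^\vee_0 F$ of Corollary~\ref{cor:fixedpoints}, so $[f]\mapsto\pi_0(f)$ is the composite of the bijection just proved with restriction to $\mb Z_p^\times$-invariants, and injectivity reduces to injectivity of that restriction on $\psth$-algebra maps. The latter is a rigidity statement: a $\psth$-algebra map out of $K^\vee_0 F$ is in particular a $\mb Z_p^\times$-equivariant ring map compatible with the Frobenius lift $\psi^p$, and since $F_0\to K^\vee_0 F$ is (the completion of) an ind-\'etale, equivalently $\mb Z_p^\times$-Galois, extension, such a map is determined by its effect on $F_0$.

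For the last statement, feed $R_1=F$, $R_2=F'$ and the basepoint $\phi=f$ into the spectral sequence of Theorem~\ref{thm:goerss-hopkins}(3): it has $E_2^{s,t}=H^s_{\psth\alg}(K^\vee_* F/(\comp K_p)_*,\Omega^t K^\vee_* F')$ for $t>0$ and abuts to $\pi_{t-s}(\Map_{E_\infty}(F,F'),f)$. The groups contributing to $\pi_1$ are the $E_\infty^{s,s+1}$ for $s\ge 0$; by the display $E_2^{0,1}=0$ (here $t=1$ is odd) and $E_2^{s,s+1}=0$ for $s\ge 2$, so $E_\infty^{1,2}$, a subquotient of $H^1_{\psth\alg}(K^\vee_* F/(\comp K_p)_*,\Omega^2 K^\vee_* F')$, is the only possibly nonzero contribution. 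The spectral sequence filters $\pi_1$ by normal subgroups with abelian associated graded $E_\infty^{s,s+1}$, and all graded pieces but the one in filtration $1$ vanish, so $\pi_1(\Map_{E_\infty}(F,F'),f)\cong E_\infty^{1,2}$ is abelian.

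The genuinely new work is confined to the first paragraph — assembling Corollaries~\ref{cor:fixedpoints} and \ref{cor:smooth-ktheory} into the hypotheses of Lemma~\ref{lem:psth-vanishing} — and to the rigidity input used for the second statement, which lies outside the obstruction-theoretic formalism; every other step is a mechanical reading-off of obstruction groups.
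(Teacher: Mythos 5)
Your verification of the hypotheses of Lemma~\ref{lem:psth-vanishing} and your deduction of the bijectivity statement run exactly as in the paper, and your proof that $\pi_1$ is abelian is a more detailed unwinding of the paper's appeal to the spectral sequence of Theorem~\ref{thm:goerss-hopkins}(3); those parts are fine.

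The injectivity argument, however, is genuinely different from the paper's and does not work as written. You reduce to the claim that a $\psth$-algebra map out of $K^\vee_0 F$ --- viewed as a $\mb Z_p^\times$-equivariant ring map out of the ind-Galois extension $F_0 \to K^\vee_0 F$ compatible with the Frobenius lift --- is determined by its restriction to $F_0$. But $\mb Z_p^\times$ is \emph{abelian}, so the Galois automorphisms of $K^\vee_0 F$ over $F_0$ are themselves $\mb Z_p^\times$-equivariant, and they commute with the arithmetic Frobenius lift; hence the restriction map from equivariant $\psth$-endomorphisms of $K^\vee_0 F$ to ring maps on $F_0$ has nontrivial fibers in general. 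Concretely, take $F=F'=\comp{K}_p$: the Adams operations $\psi^n$ for $n\in\mb Z_p^\times$ are pairwise non-homotopic $E_\infty$-self-maps, and by the bijectivity you just established their images $K^\vee_0(\psi^n)$ are pairwise distinct $\psth$-algebra maps; yet every $\psi^n$ restricts to the identity on $F_0=\mb Z_p$. So "determined by its effect on $F_0$'' fails, and the reduction you propose does not close the argument.

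The paper's proof of injectivity is of a different nature: given two $E_\infty$-maps $f,g\colon F\to F'$ with $\pi_0(f)=\pi_0(g)$, it invokes functoriality of the Igusa tower $F\to F(p)\to F(p^2)\to\cdots$ to conclude $K^\vee_0(f)=K^\vee_0(g)$, then applies the bijectivity already proved. This is an argument about maps of spectra --- it uses that $f$ and $g$ are genuine $E_\infty$-maps and that the Igusa construction is natural in such maps --- rather than the purely algebraic rigidity you assert. If you want to recast injectivity algebraically, you would need to exploit how $K^\vee_0 F$ arises as a colimit of the finite Galois levels $F(p^n)_*\cong {\cal O}_{\mb G^\vee[p^n]\setminus \mb G^\vee[p^{n-1}]}$ together with the identification of the universal trivialization, not merely $\mb Z_p^\times$-equivariance and $\psi^p$-compatibility; equivariance alone is insufficient, for the reason above.
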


\begin{proof}
  Define $A_* = K^\vee_* F$ and $B_* = K^\vee_* F'$.  By
  Corollary~\ref{cor:smooth-ktheory}, the ring $A_0$ is the $p$-adic
  completion of an ind-\'etale $F_0$-algebra.  By
  Corollary~\ref{cor:fixedpoints}, items~\ref{item:invariants} and
  \ref{item:cohomology}, we have $F_0 = (A_0)^{\mb Z_p^\times}$ and
  $(F')_0 = (B_{0})^{\mb Z_p^\times}$.  In addition, the cohomology
  groups $H_c^s(H, \Omega^t A_*)$ and $H_c^s(H, \Omega^t B_*)$ vanish
  for $H$ closed in $\mb Z_p^\times$, $s > 0$ and all $t\in\mb Z$.
  Therefore, Lemma~\ref{lem:psth-vanishing} implies that for any given
  map of $\psth$-algebras $A_*\to B_*$, the $\psth$-algebra cohomology
  $H^s_{\psth\alg}(A_*/(\comp{K}_p)_*, \Omega^t B_*)$ vanishes for
  $s\ge 2$ or $t$ odd.

  By Theorem~\ref{thm:goerss-hopkins}, item 2 we therefore find
  that any map of $\psth$-algebras $\bar f\co A_* \to B_*$ has a lift
  to a map of $E_\infty$-ring spectra $f\co F \to F'$, unique up to
  homotopy. This is equivalent to the claimed bijectivity of the first
  displayed map in this proposition.  By
  Theorem~\ref{thm:goerss-hopkins}, item 3 the fundamental group
  of the mapping space is always abelian.

To address the injectivity of the map given by evaluating
$\pi_0$, assume $f,g\co F\to F'$ are $E_\infty$-maps with
$\pi_0(f)=\pi_0(g)$. Then the functoriality of the Igusa tower
implies that $K_0^\vee(f)= K_0^\vee(g)$. Hence $f$ and $g$ are
homotopic as $E_\infty$-maps by the first part of the proposition.
\end{proof}

\begin{cor}
\label{cor:psth-action}
Let $G$ be a finite group of order prime to $p$.  Suppose that
  $F$ and $F'$ are $K(1)$-local, even-periodic, complex oriented 
  $E_\infty$-ring spectra, that the rings $F_0$ and 
  $F'_0$ are $p$-torsion free, and that $F_0$ is the $p$-adic
  completion of a smooth $\mb Z_p$-algebra.
  \begin{enumerate}
  \item Any action of $G$ on $F$ in the homotopy category of
    $E_\infty$-ring spectra has a unique lift, up to weak equivalence,
    to an action of $G$ on $F$ in the category of $E_\infty$-ring spectra.
  \item If $F$ and $F'$ are equipped with actions of $G$ in the
    category of $E_\infty$-ring spectra, then the natural map
\[
\pi_0 \Map^G_{E_\infty}(F,F') \to
\left(Hom_{\psth\alg}(K^\vee_0 F,K^\vee_0 F')\right)^G,
\]
from the connected components of the derived mapping space of
$G$-equivariant $E_\infty$-maps to $G$-equivariant maps of
$\psth$-algebras, is an isomorphism.
  \end{enumerate}
\end{cor}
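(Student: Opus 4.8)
The plan is to deduce both statements formally from Lemma~\ref{lem:groupactions} and Proposition~\ref{prop:psth-mapping}, matching up hypotheses rather than running any new obstruction-theoretic argument.

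For the first statement, I would simply apply Lemma~\ref{lem:groupactions}(1) with $R = F$. Since $G$ is a finite group of order prime to $p$ and $F$ is an $E_\infty$-ring spectrum, that lemma already asserts that any action of $G$ on $F$ in the homotopy category of $E_\infty$-ring spectra lifts, uniquely up to weak equivalence, to a genuine action of $G$ on $F$ by $E_\infty$-ring maps; no further hypotheses on $F$ are needed.

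For the second statement, the point is that Proposition~\ref{prop:psth-mapping} supplies exactly the input that Lemma~\ref{lem:groupactions}(2) requires. First one observes that $F$ and $F'$ meet the hypotheses of Proposition~\ref{prop:psth-mapping}: they are $K(1)$-local, even-periodic, complex oriented $E_\infty$-ring spectra with $F_0$ and $F'_0$ $p$-torsion free and $F_0$ the $p$-adic completion of a smooth $\mb Z_p$-algebra, which are precisely the standing assumptions of the corollary. That proposition then yields two facts: the fundamental group $\pi_1(\Map_{E_\infty}(F,F'),f)$ is abelian for every basepoint $f$, and the map $[f]\mapsto K^\vee_0(f)$ is a bijection from $\pi_0\Map_{E_\infty}(F,F')$ onto $Hom_{\psth\alg}(K^\vee_0 F, K^\vee_0 F')$. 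The first fact is the hypothesis needed to apply Lemma~\ref{lem:groupactions}(2) to $F$ and $F'$ (which carry $G$-actions by the assumption of part~2), producing an isomorphism
\[
\pi_0\Map^G_{E_\infty}(F,F') \xrightarrow{\ \cong\ }
\bigl(\pi_0\Map_{E_\infty}(F,F')\bigr)^G .
\]

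It then remains to identify the target. The bijection $[f]\mapsto K^\vee_0(f)$ is natural in $F$ and $F'$, hence equivariant for the conjugation $G$-action on $\pi_0\Map_{E_\infty}(F,F')$ induced by the genuine $G$-actions on $F$ and $F'$ and for the $G$-action on $Hom_{\psth\alg}(K^\vee_0 F, K^\vee_0 F')$ obtained by applying the functor $K^\vee_0(-)$ to those actions. Passing to $G$-fixed points then identifies $\bigl(\pi_0\Map_{E_\infty}(F,F')\bigr)^G$ with $\bigl(Hom_{\psth\alg}(K^\vee_0 F, K^\vee_0 F')\bigr)^G$, and composing with the displayed isomorphism gives the claim. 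The only step calling for any care is the verification that all of these identifications are compatible with the various $G$-actions in play --- the conjugation action on mapping spaces, the induced action on $p$-adic $K$-theory, and the naturality of $[f]\mapsto K^\vee_0(f)$ --- but this is a routine diagram chase, and I do not expect a genuine obstacle.
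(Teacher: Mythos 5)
Your proposal is correct and follows exactly the paper's own argument, which simply combines Lemma~\ref{lem:groupactions} with Proposition~\ref{prop:psth-mapping}. The only thing you add is an explicit (and accurate) check that the bijection $[f]\mapsto K^\vee_0(f)$ is $G$-equivariant before passing to fixed points, which the paper leaves implicit.
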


\begin{proof}
This combines Lemma~\ref{lem:groupactions}
and the previous proposition.
\end{proof}

We now state the main result allowing construction of $K(1)$-local
objects and maps realizing algebraic data.

To help the reader navigate through the following statement, we remark
that in the application $F$ will be the $K(1)$-localization of some
$K(2)$-local $E_\infty$-ring spectrum and $T$ will be constructed as
be a $K(1)$-local object together with a chromatic attaching map $T\to
F$.

\begin{thm}
\label{thm:realizability}
Suppose $F$ is a $K(1)$-local, complex oriented $E_\infty$-ring
spectrum such that $F_*$ is $p$-torsion free and concentrated in
degrees congruent to $0$ mod $(2p-2)$.  Let $T_*$ be a $p$-adic subring of
$F_*$, with $T_0$ the $p$-adic completion of a smooth $\mb
Z_p$-algebra.  Suppose further that $T_0 \subseteq F_0$ is closed
under the operation $\theta$, that the orientation  $MU_* \to F_*$
factors through $T_* \subseteq F_*$, and that the induced formal
group law on $T_*/(p)$ is of strict height $1$.

Then there exists a $K(1)$-local, complex oriented
$E_\infty$-ring spectrum $T$, equipped with a map $T \to F$ of
$E_\infty$-ring spectra, such that the induced map on homotopy
groups is the given inclusion $T_* \to F_*$ and such that the
orientation $MU \to F$ factors through an orientation $MU \to
T$.

As a $K(1)$-local, complex orientable $E_\infty$-ring spectrum, $T$ is
uniquely determined up to equivalence by $T_*$, the formal group over
$T_*$ specified by the given map $MU_*\to T_*$, and the ring
endomorphism of $T_0$ given by $x\mapsto x^p+p\theta(x)$.  The map of
$E_\infty$-ring spectra $T\to F$ is determined up to homotopy by its
effect on homotopy groups mentioned above.
\end{thm}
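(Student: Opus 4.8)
The plan is to build $T$ by realizing an explicitly constructed $\psth$-algebra via Goerss--Hopkins obstruction theory, and then to obtain the map $T\to F$ and the complex orientation by the descent techniques of Section~\ref{sec:realization}. Set
\[
A_* = \comp{\bigl(K_* \otimes_{MU_*} MU_*MU \otimes_{MU_*} T_*\bigr)}_p ,
\]
a $p$-torsion-free graded ring carrying a natural map $A_* \to K^\vee_*F$ induced by the inclusion $T_*\subseteq F_*$ together with the orientation of $F$; by Lemma~\ref{lem:p-complete} this $A_*$ is exactly what $K^\vee_*T$ will be. The first step is to promote $A_*$ to a graded $\psth$-algebra over $(\comp K_p)_*$. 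The Adams operations are the canonical ind-Galois $\mb Z_p^\times$-action of Lemma~\ref{lem:indgalois} (applied, as in the proof of Corollary~\ref{cor:fixedpoints}, after passing to the $(2p-2)$-periodic setting, and legitimate because the formal group on $T_*/(p)$ has exact height $1$), whose invariants recover $T_*$ and all of whose twists are cohomologically trivial. For the operation $\theta$ one uses that on a $p$-torsion-free $p$-complete ring a $\theta$-structure is the same as a lift of Frobenius $\psi^p(x)=x^p+p\theta(x)$: the hypothesis that $T_0$ is closed under $\theta$ says precisely that $\psi^p$ restricts to a Frobenius lift on $T_0$, and because $T_0\to A_0$ is the $p$-completion of an ind-\'etale extension this lift extends \emph{uniquely} to a Frobenius lift on $A_0$; the same rigidity forces this extension to be $\mb Z_p^\times$-equivariant and to be compatible with the Frobenius lift on $K^\vee_0F$ (both agree with $\psi^p$ on $T_0$, and unique lifting along the square-zero quotients $K^\vee_0F/p^{n+1}\to K^\vee_0F/p^n$ does the rest). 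Hence $A_*\to K^\vee_*F$ is a map of $\psth$-algebras.

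Now Corollary~\ref{cor:psth-realization} applies --- its hypotheses hold because $A_0$ is ind-\'etale over the smooth algebra $T_0=(A_0)^{\mb Z_p^\times}$ and the cohomology vanishing is part of Lemma~\ref{lem:indgalois} --- and produces a $K(1)$-local $E_\infty$-ring spectrum $T$ with $K^\vee_*T\cong A_*$ as $\psth$-algebras, unique up to weak equivalence. The $K(1)$-local Adams--Novikov spectral sequence of Theorem~\ref{thm:kn-anss}, combined with the cohomological triviality of $A_*$, collapses to give $\pi_*T\cong (A_*)^{\mb Z_p^\times}=T_*$. Applying part~(2) of Theorem~\ref{thm:goerss-hopkins} to the $\psth$-map $A_*=K^\vee_*T\to K^\vee_*F$, whose obstruction and uniqueness groups vanish by Lemma~\ref{lem:psth-vanishing} (using Corollaries~\ref{cor:smooth-ktheory} and~\ref{cor:fixedpoints} to verify the input), produces a map $T\to F$ of $E_\infty$-ring spectra realizing it and unique up to homotopy. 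On homotopy groups this map is the map of $\mb Z_p^\times$-invariants $(A_*)^{\mb Z_p^\times}\to (K^\vee_*F)^{\mb Z_p^\times}$, i.e.\ the given inclusion $T_*\subseteq F_*$.

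For the complex orientation the plan is to descend from the first stage $T(p)$ of the Igusa tower. Using $K^\vee_*T\cong A_*$, Lemma~\ref{lem:igusa} shows $T(p)$ is $K(1)$-local and even-periodic with $\pi_*T(p)\cong T_*[s]/(s^{p-1}-b)$ for a unit $b$, hence faithfully flat over $T_*$. Since $T(p)$ is even-periodic it is complex orientable, and the formal group law over $T_*$ base-changed to $\pi_*T(p)$ is realized by some orientation $MU\to T(p)$, which therefore factors on homotopy through $T_*\subseteq T(p)_*$. Corollary~\ref{cor:fgl-lifting} (equivalently Corollary~\ref{cor:orientation-descent} along the faithfully flat map $T_*\to T(p)_*$) then supplies an orientation $MU\to T$ realizing $MU_*\to T_*$. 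The composite $MU\to T\to F$ realizes $MU_*\to T_*\subseteq F_*$, which by hypothesis is the orientation of $F$; and since homotopy classes of orientations of a complex-orientable spectrum are detected by their effect on homotopy groups, $MU\to T\to F$ agrees with the given orientation $MU\to F$.

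It remains to address uniqueness. If $T'$ is a second $K(1)$-local complex-orientable $E_\infty$-ring spectrum with the listed invariants, then Lemma~\ref{lem:p-complete} identifies $K^\vee_*T'$ with $A_*$ as a ring with $\mb Z_p^\times$-action, and the uniqueness of Frobenius lifts along the ind-\'etale extension $T_0\to K^\vee_0T'$ identifies the $\theta$-operation with that of $A_*$; so $K^\vee_*T'\cong K^\vee_*T$ as $\psth$-algebras and $T'\simeq T$ by Corollary~\ref{cor:psth-realization}. For the map, functoriality of the Igusa tower reduces the question to the even-periodic spectra $T(p)$, $F(p)$, where Proposition~\ref{prop:psth-mapping} shows an $E_\infty$-map is determined up to homotopy by its effect on homotopy groups; hence any two maps $T\to F$ inducing $T_*\subseteq F_*$ are homotopic. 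The step I expect to be the main obstacle is the $\theta$-bookkeeping of the first paragraph: one must produce the correct $\psth$-structure on $K^\vee_*T$ and, above all, identify the algebraically defined $\theta$ on the ind-\'etale extension $T_0\to K^\vee_0T$ with the one governed by the $E_\infty$-power operation on $K^\vee_0F$ --- a reconciliation that rests entirely on the rigidity of Frobenius lifts along formally \'etale maps. The rest is an assembly of the machinery already in place, with care needed only in shuttling between even-periodic and $(2p-2)$-periodic spectra through the Igusa tower.
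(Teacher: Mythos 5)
Your proposal follows the same overall strategy as the paper: build the $\psi$-$\theta$-algebra algebraically, realize it via Corollary~\ref{cor:psth-realization}, produce the map to $F$ by Theorem~\ref{thm:goerss-hopkins} plus Lemma~\ref{lem:psth-vanishing}, descend the orientation through the Igusa tower, and deduce uniqueness from Proposition~\ref{prop:psth-mapping}. There are two genuine departures. First, you aim Corollary~\ref{cor:psth-realization} directly at $A_*:=K^\vee_*T$, whereas the paper first passes to the even-periodic Igusa stage $T(p)$, realizes $T(p)$ together with its $(\mb Z/p)^\times$-action, and then extracts $T$ as the homotopy fixed points; your route avoids the equivariance bookkeeping of Lemma~\ref{lem:groupactions} and Corollary~\ref{cor:psth-action}. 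Second, you manufacture the $\theta$-structure on $A_0$ by extending the Frobenius lift uniquely along the formally \'etale map $T_0\to A_0$, and identify it with the topological one by the same rigidity (both agree mod $p$ and on $T_0$); the paper instead uses the universal property of $A_0$ together with the factorization of $\widehat{\mb G}_m\to\mb G\to\psi^*\mb G$ through $[p]$. Your Frobenius-rigidity argument is a clean alternative and is correct.

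There is, however, a gap worth flagging in the first part. You cite Lemma~\ref{lem:indgalois} as if it directly shows that $A_0$ is the $p$-completion of an ind-\'etale extension of $T_0=(A_0)^{\mb Z_p^\times}$ and that all twists of $A_*$ are $\mb Z_p^\times$-cohomologically trivial, as required by Corollary~\ref{cor:psth-realization} and Lemma~\ref{lem:psth-vanishing}. But Lemma~\ref{lem:indgalois}, applied to the periodified ring $T_*^{per}\cong T_0$, yields these conclusions for
\[
V := \comp{\bigl(K_0 \otimes_{MUP_0} MUP_0MUP \otimes_{MUP_0} T_0\bigr)}_p \;\cong\; K^\vee_0\,T(p),
\]
not for the strictly smaller ring $A_0 = K^\vee_0 T$; in fact $A_0 \hookrightarrow V$ is a finite free extension of rank $p-1$, with $V\cong A_0[\sigma]/(\sigma^{p-1}-u)$ for a unit $u$. (Example: for $T$ the $p$-complete Adams summand, $V=K^\vee_0\comp K_p = \Map_c(\mb Z_p^\times,\mb Z_p)$ while $A_0=K^\vee_0 T$ is its $(\mb Z/(p-1))$-invariant subring.) The required statements for $A_0$ are true --- the finite \'etale faithfully flat inclusion $A_0\to V$ with $\Omega_{V/T_0}=0$ forces $\Omega_{A_0/T_0}=0$ and identifies $\Omega_{A_0/\mb Z_p}$ with $A_0\otimes_{T_0}\Omega_{T_0/\mb Z_p}$, and the twists of $A_0$ appear as $\mb Z_p^\times$-equivariant direct summands of twists of $V$ --- but a short extra argument is needed that you do not supply. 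This is precisely what the paper sidesteps by running the obstruction theory over $T(p)$, whose $p$-adic $K$-theory in degree zero is literally the $V$ handed over by Lemma~\ref{lem:indgalois}.
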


\begin{proof}
  We will first construct $T(p)_*$ to be the first stage in the Igusa
  tower for $T_*$. Denote by $\mb G$ the formal group over $T_*$
  classified by the given map $MU_*\to T_*$.  Now, we define $T(p)_*$
  to be the $T_*$-algebra which is the coordinate ring ${\cal O}_{\mb
    G^\vee[p] \setminus 0}$ parameterizing isomorphisms between the
  $p$-torsion $\mb G[p]$ of the formal group law on $T_*$ and the
  group of $p$-th roots of unity (see Lemma~\ref{lem:igusa},
  item~\ref{item:4}).  This is a Galois extension of $T_*$ 
  with Galois group $(\mb Z/p)^\times$.  Naturality of the Igusa tower
  implies that the map $T_* \to F_*$ induces an equivariant
  isomorphism $F(p)_* \cong T(p)_* \otimes_{T_*} F_*$.  The assumption
  that $F_*$ is concentrated in certain degrees implies that $F(p)_0 =
  F_0$ and $T(p)_0 = T_0$.

We define graded rings 
\begin{align*}
A_* &= \comp{(K_* \otimes_{MUP_*} MUP_* MUP \otimes_{MUP_*} T(p)_*)}_p\text{, and}\\
B_* &= \comp{(K_* \otimes_{MUP_*} MUP_* MUP \otimes_{MUP_*} F(p)_*)}_p.
\end{align*}
Both rings $A_*$ and $B_*$ are $p$-complete and even-periodic; this
follows from the assumption on $F_*$.  Both $A_*$ and $B_*$ have
actions of $(\mb Z/p)^\times$ through the rightmost factors in the
tensor product.  In addition, they carry actions of $\mb Z_p^\times$
through Adams operations given by the coaction of the $p$-completed
Hopf algebroid $\comp{(K_* \otimes_{MUP_*} MUP_*MUP \otimes_{MUP_*}
  K_*)}_p$.  Specifically, for any $p$-adic unit $k\in\mb Z_p^\times$,
the multiplication-by-$k$ map is a natural automorphism of any formal
group law over a $p$-adic ring, and is classified on the universal
example by a map
\[
[k]\co \comp{(MUP_* MUP)_p} \to \comp{(MUP_*)}_p.
\]
Composing this map with the coalgebra action gives compatible
  actions of $\mb Z_p^\times$ on $A_*$ and $B_*$.

By Lemma~\ref{lem:p-complete} there is an isomorphism, equivariant
  for $(\mb Z/p)^\times$ and the action of the Adams operations,
between $B_*$ and $K^\vee_* F(p)$.

As $T_0$ is closed under the operation $\theta$, the subring $A_0
\subseteq B_0$ is also closed under the operation $\theta$, as
follows.  The ring $A_0$ is universal among $p$-adic
$T(p)_0$-algebras $j\co T(p)_0 \to R$ equipped with an isomorphism
$\widehat{\mb G}_m \to j^* \mb G$.  Over the ring $B_0$, we have a
composite isogeny
\[
\widehat{\mb G}_m \overto^{\cong} \mb G \to \psi^* \mb G 
\] 
of degree $p$ by Theorem~\ref{thm:level-structures}.  As the
formal multiplicative group has only one subgroup scheme of rank $p$,
this factors uniquely as
\[
\widehat{\mb G}_m \overto^{[p]} \widehat{\mb G}_m\overto^{\cong} \psi^* \mb G,
\]
and the universal property then shows that the map
$\psi\co B_0 \to B_0$ restricts to a map $A_0 \to A_0$.  The universal
property also implies that this is a lift of Frobenius, and hence,
since $A_0$ is $p$-torsion free, it has the structure of a
$\psth$-algebra.  The action of $(\mb Z/p)^\times$ commutes with
this structure.

Lemma~\ref{lem:indgalois}, item~\ref{item:canonicalgalstructure}
implies that the maps $T(p)_0 \to A_0$ and $F(p)_0 \to B_0$ are
ind-Galois extensions of $p$-adic rings with Galois group $\mb
Z_p^\times$.  We conclude that $A_*$ is the $p$-adic
completion of an ind-\'etale extension of $T(p)_*$, that $T(p)_0$
is the $p$-adic completion of a smooth $\mb Z_p$-algebra, and that the
continuous cohomology of $\mb Z_p^\times$ with coefficients in $A_*$
vanishes.  Applying Corollary~\ref{cor:psth-realization} to $A_*$
implies that there exists a $K(1)$-local $E_\infty$-ring spectrum
$T(p)$ equipped with an isomorphism $K^\vee_* T(p) \cong A_*$ as
$\psth$-algebras.

The homotopy of $T(p)$ is $T(p)_*$ by Theorem~\ref{thm:kn-anss},
  and is concentrated in even degrees.  Therefore, $T(p)$ and $F(p)$
  are even-periodic and complex orientable, their homotopy groups are
  $p$-torsion free, and $T(p)_0$ is the $p$-adic completion of a
  smooth $\mb Z_p$-algebra.

Proposition~\ref{prop:psth-mapping} implies that the inclusion $A_*
\to B_*$ lifts to a unique map of $E_\infty$-ring spectra $f\co T(p)
\to F(p)$.  As the inclusion is $(\mb Z/p)^\times$-equivariant,
Corollary~\ref{cor:psth-action} implies that there exists, up to weak
equivalence, a unique lift of this action of $(\mb Z/p)^\times$
to $T(p)$, and a lift of $f$ to an equivariant map, where the
action on the range is the action on $F(p)$ coming from the Igusa
tower.

The description of the $K$-theory of $T(p)$, together with its Adams
operations, determines the formal group of $T(p)$ as that coming from
the given map $MU_* \to T(p)_*$.  In fact, for any $K(1)$-local
complex oriented multiplicative cohomology theory $F$, the action of
the Adams operations on $K^\vee_* F$ precisely provides descent data
for the formal group of multiplicative type carried by $F_*$.
(One can instead view this in terms of the dual, \'etale group, which
is locally isomorphic to $\mb Q_p/\mb Z_p$ and is classified by a
character of the fundamental group with coefficients in $\mb
Z_p^\times$.) Therefore, since this formal group law is lifted from
one over $T_* \subseteq T(p)_*$, by
Corollary~\ref{cor:orientation-descent} the orientation $MU \to F(p)$
lifts to an orientation $MU \to T(p)$.

Let $T$ be the homotopy-fixed point spectrum $T(p)^{h(\mb
  Z/p)^\times}$.  We have a map $T \to F$ of $E_\infty$-ring spectra,
and on homotopy groups this realizes the map of fixed subrings
$T(p)_*^{(\mb Z/p)^\times} \to F(p)_*^{(\mb Z/p)^\times}$.  By
Lemma~\ref{lem:igusa}, item~\ref{item:3} this is precisely the map
$T_* \to F_*$.

We then find that we have a composite splitting
\[
\Map_{E_\infty}(T,F) \to \Map_{E_\infty}^{(\mb Z/p)^\times}(T(p), F(p))
\to \Map_{E_\infty}(T,F)
\]
given by first taking the first stage in the Igusa tower, followed by
taking homotopy fixed points.  Upon applying $\pi_0$, homotopical
uniqueness of the equivariant lift $T(p) \to F(p)$ then implies
uniqueness of the map $T \to F$.

Corollary~\ref{cor:fgl-lifting} implies that the orientation $MU \to
T(p)$ has a unique lift $MU \to T$.
 \end{proof}

\section{$K(1)$-local realization}
\label{sec:k1-localization}

In this section, we will begin with a realization problem $(A,\mb G)$
as in Definition~\ref{defn:realization}, together with the unique
solution $R_{K(2)}$ to the realization problem for $A_{K(2)}$ as in
Theorem~\ref{thm:k2local-exists}.  We will extend this to study
solutions to the realization problem for $\comp{A}_p$, using the
$K(1)$-local $E_\infty$-ring spectrum
$R_{K(2),K(1)}=L_{K(1)}R_{K(2)}$.

\subsection{$K(1)$-localized algebras}
The following result will verify the assumptions of
Theorem~\ref{thm:realizability} and will lead to the construction of
the chromatic attaching map $R_{K(1)}\to R_{K(2),K(1)}$ in Section
\ref{sec:application-bpp2}.

\begin{prop}
\label{prop:stability}
Suppose that $(A, \mb G)$ is a realization problem, and $R_{K(2)}$
is the solution to the realization problem for $A_{K(2)}$ given by
Theorem~\ref{thm:k2local-exists}.  Let $F$ be the $K(1)$-localization
$R_{K(2),K(1)} = L_{K(1)} R_{K(2)}$, and let $B = A_{K(1)}$.
For brevity, we define $x$ to be $v_1^{p+1} v_2^{-1}$.
\begin{itemize}
\item The spectrum $F$ is a $K(1)$-local complex oriented
  $E_\infty$-ring spectrum, with $F_*$ concentrated in even degrees
  and $p$-torsion free.
\item The ring $B$ maps identically into a $p$-adic subring of $F_*$, 
with $B_0 \cong \comp{\mb Z[x^{-1}]}_p$ the $p$-adic
completion of a smooth $\mb Z_p$-algebra.
\item The orientation $MU_*\to F_*$ factors through
  $B\subseteq F_*$.
\end{itemize}
\end{prop}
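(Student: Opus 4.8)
The plan is to reduce all three assertions to the single computation that $F_* = \pi_* L_{K(1)} R_{K(2)}$ is the ring $A_{K(2),K(1)} = \comp{\mb Z_p\laur{x}}_p[v_1^{\pm 1}]$ recorded in Remark~\ref{rmk:ringcalculations}, and then read the statements off. The ``soft'' parts are immediate: by Theorem~\ref{thm:k2local-exists} the spectrum $R_{K(2)}$ is an $E_\infty$-ring spectrum carrying a complex orientation $MU\to R_{K(2)}$ with $\pi_* R_{K(2)}\cong A_{K(2)}$, so $F=L_{K(1)}R_{K(2)}$ is $K(1)$-local by construction, is an $E_\infty$-ring spectrum (the $K(1)$-localization of one), and inherits a complex orientation from the composite $MU\to R_{K(2)}\to F$. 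It remains to compute $F_*$ and to check the containment and factorization statements.

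For the key computation I would argue that $K(1)$-localizing $R_{K(2)}$ amounts, algebraically, to inverting $v_1$ and $p$-completing. The localization map $R_{K(2)}\to v_1^{-1}R_{K(2)}$ is a $K(1)$-equivalence: its fiber is built from the $v_1$-torsion spectra $R_{K(2)}/v_1^n$, each $K(1)$-acyclic because $v_1$ acts invertibly on $K(1)\smsh{}R_{K(2)}$. The spectrum $v_1^{-1}R_{K(2)}$ is $E(1)$-local, being $v_1$-periodic: it smashes to zero with any finite $E(1)$-acyclic complex, since such a complex has type $\geq 2$, so its $BP$-homology is $v_1$-power-torsion, and hence, since $A_{K(2)}$ is Landweber exact, the smash product has $v_1$-power-torsion homotopy, which vanishes after inverting $v_1$. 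Since the $p$-completion of an $E(1)$-local spectrum is $E(1)$-local and $p$-complete, hence $K(1)$-local (its $K(1)$-acyclic part would be both rational and $p$-complete, so zero), we obtain $F=L_{K(1)}(v_1^{-1}R_{K(2)})=(v_1^{-1}R_{K(2)})^\wedge_p$. As $A_{K(2)}$ is $p$-torsion-free, so is $v_1^{-1}A_{K(2)}$, and therefore $F_*=(v_1^{-1}A_{K(2)})^\wedge_p$. Because $v_1^{p+1}=xv_2$ in $A_{K(2)}$, inverting $v_1$ inverts $x$, and Remark~\ref{rmk:ringcalculations} identifies $(v_1^{-1}A_{K(2)})^\wedge_p$ with $A_{K(2),K(1)}=\comp{\mb Z_p\laur{x}}_p[v_1^{\pm 1}]$; this is $p$-torsion-free and concentrated in degrees divisible by $2(p-1)$, which gives the first bullet. (Equivalently, one can compute $F_*=(L_{K(1)}E)^{hG}$ from the description $R_{K(2)}=E^{hG}$, using that $L_{K(1)}$ commutes with homotopy fixed points of the order-prime-to-$p$ group $G$ and that the homotopy fixed point spectral sequence collapses; the $G$-invariants of $\pi_* L_{K(1)}E=\comp{(u_1^{-1}E_*)}_p$ are again $\comp{\mb Z_p\laur{x}}_p[v_1^{\pm 1}]$.) This chromatic identification of $F_*$ is the step I expect to require the most care; it could alternatively be extracted from the telescope-conjecture-type results cited in Remark~\ref{rem:homotopyoflocalizations}.

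Granting $F_*\cong A_{K(2),K(1)}$, the remaining two bullets are bookkeeping with the rings of Remark~\ref{rmk:ringcalculations}. There $B=A_{K(1)}=\comp{\mb Z_p[x^{-1}]}_p[v_1^{\pm 1}]$, so $B_0=\comp{\mb Z_p[x^{-1}]}_p=\comp{\mb Z[x^{-1}]}_p$ is the $p$-adic completion of the polynomial ring $\mb Z_p[x^{-1}]$, hence of a smooth $\mb Z_p$-algebra. The canonical $A$-algebra map $A_{K(1)}\to A_{K(2),K(1)}$ is, in each graded degree, the inclusion $\comp{\mb Z_p[x^{-1}]}_p\hookrightarrow\comp{\mb Z_p\laur{x}}_p$, which is injective since it reduces mod $p$ to $\mb F_p[x^{-1}]\hookrightarrow\mb F_p\laur{x}$ and both sides are $p$-torsion-free and $p$-complete; this exhibits $B$ as a $p$-adic subring of $F_*$ and proves the second bullet. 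For the third, since $R_{K(2)}$ solves the realization problem for $A_{K(2)}$, the classifying map $MU_*\to A_{K(2)}$ of its formal group law factors through $A\to A_{K(2)}$; composing with the induced map $A_{K(2)}\to F_*$, and using that $v_1$ becomes a unit in $F_*$ while $F_*$ is $p$-complete, the resulting map $A\to F_*$ factors through $\comp{(v_1^{-1}A)}_p=A_{K(1)}=B$, as required.
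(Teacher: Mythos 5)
Your proof is correct and follows essentially the same strategy as the paper: reduce everything to the identification $F_* \cong A_{K(2),K(1)} \cong \comp{\mb Z_p\laur{x}}_p[v_1^{\pm 1}]$ and then read the three bullets from Remark~\ref{rmk:ringcalculations}. The paper dispatches the key step $F = \comp{(v_1^{-1}R_{K(2)})}_p$ by citing \cite[Corollary 1.5.5]{hovey-vnelements}, while you unpack it by hand. One point in your hands-on argument deserves to be made explicit: showing that $v_1^{-1}R_{K(2)}$ smashes to zero with every finite type-$\geq 2$ complex establishes $L_1^f$-locality (since $L_1^f$ is smashing and the $L_1^f$-acyclic sphere is built from such complexes), and passing from this to $E(1)$-locality uses the height-1 telescope conjecture, which is a theorem of Mahowald ($p=2$) and Miller (odd primes) rather than a formal consequence. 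You do gesture at this by mentioning the telescope-conjecture-type results of Remark~\ref{rem:homotopyoflocalizations} as an alternative, which is exactly the paper's route. The rest of your argument --- Landweber exactness of $A_{K(2)}$ to handle the $BP$-homology of type-$\geq 2$ complexes, the observation that $p$-complete plus $E(1)$-local equals $K(1)$-local, the injectivity of $\comp{\mb Z_p[x^{-1}]}_p \hookrightarrow \comp{\mb Z_p\laur{x}}_p$, and the factorization $MU_* \to A \to B \to F_*$ via the universal property of inverting $v_1$ and $p$-completing --- all agrees with what the paper does in a more compressed form.
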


\begin{proof}
We consider $F$ complex oriented by the composite
\begin{equation}\label{eq:R12-orient}
MU\to R_{K(2)}\to R_{K(2),K(1)}=F,
\end{equation}
where the first map is the orientation which is part of the 
solution to our realization problem for $A_{K(2)}$ and
the second map is the canonical localization map.

We first compute the homotopy of the $K(1)$-localization $F$.
  Application of \cite[Corollary 1.5.5]{hovey-vnelements}
allows us to conclude that the $K(1)$-localization is
$F=\comp{(v_1^{-1} R_{K(2)})}_p$.  We then have the following
(see Remarks~\ref{rmk:ringcalculations} and
\ref{rem:homotopyoflocalizations}):
\[
F_* \cong (A_{K(2)})_{K(1)} \cong \left(\comp{\mb Z_p\laur
  {x}}_p\right)[v_1^{\pm 1}]
\]
From this we find that the map
\[
B=A_{K(1)} = \comp{\mb Z_p[x^{-1}]}_p[v_1^{\pm 1}] \to F_*
\]
is injective, and the orientation factors as $MU_* \to A
\hookrightarrow B \hookrightarrow F_*$, where the first map is
part of our realization problem.
\end{proof}

\subsection{Application of obstruction theory}
\label{sec:application-bpp2}

\begin{thm}
\label{thm:k1local-exists}
Suppose that $(A, \mb G)$ is a realization problem, and that
$R_{K(2)}$ is the solution to the lifting problem for $A_{K(2)}$ given
by Theorem~\ref{thm:k2local-exists}.  Let $F$ be the
$K(1)$-localization $R_{K(2),K(1)}$, and let $B \subseteq F_*$ be the
subring which is the image of $A_{K(1)}$ as in Proposition
\ref{prop:stability}.

Then $B_0$ is closed under the power operation $\theta$, defined on
$F_0$ as in section \ref{subsect:obstruction}, if and only if there
exists a $K(1)$-local $E_\infty$-ring spectrum $T$ solving the
realization problem for $B$ together with a ``chromatic attaching
map'' of $E_\infty$-ring spectra $\alpha^{chrom}\co T\to F$, such that
the induced map on homotopy groups is isomorphic to the inclusion $B
\to F_*$.  Any two such spectra $T$ are equivalent as $E_\infty$-ring
spectra over $F$.

Under these circumstances, the orientation $MU \to F$ factors as
$MU\to T \to F$.
\end{thm}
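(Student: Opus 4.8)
The plan is to derive both directions of the equivalence from Theorem~\ref{thm:realizability}, applied with the spectrum $F = R_{K(2),K(1)}$ and with the designated subring $T_* = B$, and to obtain the uniqueness and the orientation factorization from the uniqueness clauses of that theorem together with naturality of the power operation $\theta$ under $E_\infty$-maps.

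\emph{Sufficiency.} Assuming $B_0$ is $\theta$-closed, the first task is to verify the remaining hypotheses of Theorem~\ref{thm:realizability}. Proposition~\ref{prop:stability} already provides that $F$ is a $K(1)$-local complex oriented $E_\infty$-ring spectrum with $F_*$ $p$-torsion free, that $B = A_{K(1)}$ maps identically onto a $p$-adic subring of $F_*$ with $B_0 \cong \comp{\mb Z[x^{-1}]}_p$ the $p$-adic completion of a smooth $\mb Z_p$-algebra, and that the orientation $MU_* \to F_*$ factors through $B$. It remains to note, using the computations recalled in Remark~\ref{rmk:ringcalculations}, that $F_* \cong (\comp{\mb Z_p\laur{x}}_p)[v_1^{\pm 1}]$ with $|v_1| = 2p-2$ is concentrated in degrees congruent to $0$ mod $(2p-2)$, and that, because $v_1$ is a unit in $A_{K(1)}$, the mod-$p$ reduction of the $p$-series of the formal group law over $B$ has leading term a unit multiple of $x^p$, so that this formal group law has strict height $1$. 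Theorem~\ref{thm:realizability} then furnishes a $K(1)$-local complex oriented $E_\infty$-ring spectrum $T$ with $\pi_* T \cong B$, an $E_\infty$-map $\alpha^{chrom}\co T \to F$ inducing the inclusion $B \hookrightarrow F_*$, and a factorization of $MU \to F$ through an orientation $MU \to T$; this $T$ solves the realization problem for $B$ and establishes the final assertion of the theorem.

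\emph{Necessity.} Conversely, given such a pair $(T, \alpha^{chrom})$, the idea is that $\alpha^{chrom}$ carries the $\theta$-operation of $T_0 = B_0$ to that of $F_0$. Indeed, $T$ is a $K(1)$-local $E_\infty$-ring spectrum with $\pi_0 T = B_0$ $p$-torsion free and with formal group over $B_0/(p)$ of height $1$, so $\theta$ is defined on $\pi_0 T$ as the restriction of the operation $\theta$ on the $\psth$-algebra $K^\vee_0 T$. Since $\alpha^{chrom}$ is an $E_\infty$-map, $K^\vee_0(\alpha^{chrom})$ is a map of $\psth$-algebras and hence commutes with $\theta$; restricting to $\mb Z_p^\times$-invariants — which $\theta$ preserves because it commutes with the Adams operations — and using $\pi_0 T = (K^\vee_0 T)^{\mb Z_p^\times}$ and $\pi_0 F = (K^\vee_0 F)^{\mb Z_p^\times}$ from Corollary~\ref{cor:fixedpoints}, one obtains $\theta_F(b) = \theta_T(b) \in B_0$ for all $b \in B_0$, i.e.\ $B_0$ is $\theta$-closed.

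\emph{Uniqueness over $F$, and the main obstacle.} For two solutions $(T, \alpha^{chrom})$ and $(T', \alpha'^{chrom})$, the identity $\theta_F = \theta_T$ on $B_0$ from the previous paragraph forces the $\theta$-structure on $B_0$ in each case to be the restriction of $\theta$ from $F_0$; hence $T$ and $T'$ carry the same homotopy ring $B$, the same formal group over it, and the same endomorphism $x \mapsto x^p + p\theta(x)$ of $B_0$. The uniqueness clause of Theorem~\ref{thm:realizability} then gives an equivalence $g\co T \to T'$ of $K(1)$-local $E_\infty$-ring spectra compatible with these data, and since $\alpha^{chrom}$ and $\alpha'^{chrom}\circ g$ induce the same map on homotopy groups they are homotopic by the uniqueness of the chromatic attaching map in the same theorem; thus $g$ is an equivalence over $F$, and the factorization $MU \to T \to F$ is the one produced by Theorem~\ref{thm:realizability}. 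I expect the only real work to be the verification of the hypotheses of Theorem~\ref{thm:realizability} — particularly the strict-height-$1$ condition and the smoothness of $B_0$ — with the remaining content being formal naturality of power operations.
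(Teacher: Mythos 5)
Your proposal is correct and takes essentially the same route as the paper: both reduce everything to Theorem~\ref{thm:realizability}, verifying its hypotheses via Proposition~\ref{prop:stability}, with necessity following from naturality of $\theta$. The paper's proof is a two-sentence deferral to those results; you usefully fill in the two hypotheses of Theorem~\ref{thm:realizability} that Proposition~\ref{prop:stability} does not literally state (degrees concentrated mod $2p-2$, and strict height $1$ of the mod-$p$ formal group over $B$ from the invertibility of $v_1$), and you spell out the uniqueness step that the paper leaves implicit in the uniqueness clause of Theorem~\ref{thm:realizability}. One minor simplification available in your necessity argument: since $\alpha^{chrom}$ is an $E_\infty$-map, naturality of the power operation already gives $\alpha_0 \circ \theta_T = \theta_F \circ \alpha_0$ directly on $\pi_0$, so the detour through $K^\vee_0$, the Adams operations, and the invariants is not needed—though it is of course also correct.
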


\begin{proof}
The necessity follows from the naturality of the operation
$\theta$.  To show sufficiency, Proposition~\ref{prop:stability},
together with the assumptions on the power operation $\theta$, show
that the subring $B \subseteq F_*$ satisfies the assumptions of
Theorem~\ref{thm:realizability}.
\end{proof}

\begin{defn}
\label{def:k12-def}
Suppose that $(A, \mb G)$ is a realization problem such that the
image of $\left(A_{K(1)}\right)_0$ in $\left(A_{K(2),K(1)}\right)_0$ is closed under the
power operation $\theta$.  Then let $R_{K(1) \vee K(2)}$ be the
$E_\infty$-ring spectrum formed as the homotopy pullback in the
following diagram:
\begin{equation}\label{diagram}
\xymatrix{
R_{K(1) \vee K(2)} \ar@{.>}[r] \ar@{.>}[d] & R_{K(2)} \ar[d]^\iota \\
R_{K(1)} \ar[r]^(.4){\alpha^{chrom}} & R_{K(2),K(1)}. }
\end{equation}
Here, $R_{K(2)}$ is as in Theorem~\ref{thm:k2local-exists}, $\iota$ is
the canonical localization map, and $\alpha^{chrom}$ is the
chromatic attaching map from Theorem~\ref{thm:k1local-exists}. 
\end{defn}

\begin{prop}
\label{prop:12homotopy}
Under the conditions of Definition~\ref{def:k12-def}, the connective
cover of $R_{K(1) \vee K(2)}$ solves the realization problem for
$\comp{A}_p \cong \mb Z_p[v_1,v_2]$, and any two such solutions
are equivalent.
\end{prop}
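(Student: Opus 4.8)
The plan is to compute $\pi_*$ of the homotopy pullback $R_{K(1)\vee K(2)}$ of Definition~\ref{def:k12-def} via the Mayer--Vietoris sequence, to show that in nonnegative degrees it agrees with $\comp A_p = \mb Z_p[v_1,v_2]$ as a graded ring carrying the image of $\mb G$, and to conclude that the connective cover $R := \tau_{\ge 0} R_{K(1)\vee K(2)}$ is a solution; then to deduce uniqueness by localizing an arbitrary solution and matching it against the defining pullback square.

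\emph{Existence.} By Theorem~\ref{thm:k2local-exists}, Theorem~\ref{thm:k1local-exists} and Proposition~\ref{prop:stability}, the corners $R_{K(2)}$, $R_{K(1)}$, $R_{K(2),K(1)}$ of the square have homotopy rings $A_{K(2)}$, $B = A_{K(1)}$, $A_{K(2),K(1)}$, which by Remark~\ref{rmk:ringcalculations} are concentrated in degrees divisible by $2(p-1)$. Writing $x = v_1^{p+1}v_2^{-1}$ and $l = \lfloor k/(p+1)\rfloor$, in degree $2(p-1)k$ these three modules are free of rank one over $\mb Z_p\pow{x}$, over $\comp{\mb Z_p[x^{-1}]}_p$, and over $\comp{\mb Z_p\laur{x}}_p$ respectively, and with respect to the evident generators the localization map $\iota_*$ becomes multiplication by $x^{-l}$ while $\alpha^{chrom}_*$ becomes the inclusion $\comp{\mb Z_p[x^{-1}]}_p \hookrightarrow \comp{\mb Z_p\laur{x}}_p$; this is a direct consequence of the relation $v_1^{p+1} = xv_2$. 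Since $\mb Z_p\pow{x} + \comp{\mb Z_p[x^{-1}]}_p = \comp{\mb Z_p\laur{x}}_p$ (split a series into its parts of nonnegative and of negative $x$-degree), the difference map $A_{K(2)}\oplus B \to A_{K(2),K(1)}$ is surjective in every positive degree; hence the Mayer--Vietoris long exact sequence shows that $\pi_n R_{K(1)\vee K(2)}$ for $n\ge 0$ equals the ring pullback $A_{K(2)}\times_{A_{K(2),K(1)}} B$. A short computation with $v_1^{p+1} = xv_2$ (tracking the $p$-adic convergence conditions) identifies this pullback in nonnegative degrees with $\comp A_p = \mb Z_p[v_1,v_2]$, compatibly with the natural map $\comp A_p \to A_{K(2)}\times_{A_{K(2),K(1)}} B$ induced by $MU_* \to A \to \comp A_p$. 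Thus $\pi_* R \cong \comp A_p$ as graded rings, and $R$ inherits an $E_\infty$-structure by \cite[VII, 4.3]{ekmm}. The two chosen orientations of $R_{K(2)}$ and $R_{K(1)}$ agree after mapping to $R_{K(2),K(1)}$ -- both induce the orientation of $F$ fixed in Proposition~\ref{prop:stability} and Theorem~\ref{thm:k1local-exists} -- so Lemma~\ref{lem:fgl-pullback} supplies a complex orientation of $R_{K(1)\vee K(2)}$, and since $MU$ is connective this restricts to an orientation of $R$; on homotopy it is the $p$-completion of $MU_* \to A$, which classifies the image of $\mb G$ in $\comp A_p$. Therefore $R$ solves the realization problem for $\comp A_p$.

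\emph{Uniqueness.} Let $R'$ be any solution, i.e. a connective complex oriented $E_\infty$-ring spectrum with $\pi_* R' \cong \comp A_p$ carrying the image of $\mb G$. By Remark~\ref{rem:homotopyoflocalizations}, the spectra $L_{K(2)}R'$, $L_{K(1)}R'$ and $L_{K(1)}L_{K(2)}R'$ have homotopy $A_{K(2)}$, $B$ and $A_{K(2),K(1)}$, and upon localizing the orientation of $R'$ they are solutions to the realization problems for $A_{K(2)}$, for $B$, and for $A_{K(2),K(1)}$ respectively, with the canonical maps among them inducing on homotopy the natural inclusions $\comp A_p \to A_{K(2)} \to A_{K(2),K(1)}$ and $\comp A_p \to B \to A_{K(2),K(1)}$. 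The uniqueness clause of Theorem~\ref{thm:k2local-exists} gives an equivalence $L_{K(2)}R' \simeq R_{K(2)}$, hence an equivalence $L_{K(1)}L_{K(2)}R' \simeq L_{K(1)}R_{K(2)} = R_{K(2),K(1)} = F$; with respect to this identification the map $L_{K(1)}R' \to F$ exhibits $L_{K(1)}R'$ as a $K(1)$-local solution of the realization problem for $B$ equipped with a chromatic attaching map inducing $B \hookrightarrow F_*$, so Theorem~\ref{thm:k1local-exists} gives an equivalence $L_{K(1)}R' \simeq R_{K(1)}$ over $F$. These assemble into an objectwise equivalence of cospans from $\big(L_{K(2)}R' \to L_{K(1)}L_{K(2)}R' \leftarrow L_{K(1)}R'\big)$ to $\big(R_{K(2)} \to F \leftarrow R_{K(1)}\big)$, so taking homotopy pullbacks and using the chromatic fracture equivalence $L_{K(1)\vee K(2)}R' \simeq L_{K(2)}R'\times_{L_{K(1)}L_{K(2)}R'} L_{K(1)}R'$ yields an equivalence $L_{K(1)\vee K(2)}R' \simeq R_{K(1)\vee K(2)}$ under which the localization map $R' \to L_{K(1)\vee K(2)}R'$ becomes a map $R' \to R_{K(1)\vee K(2)}$ realizing on homotopy the natural map $\comp A_p \to A_{K(2)}\times_{A_{K(2),K(1)}} B$. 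By the existence computation this is an isomorphism in nonnegative degrees; as $R'$ is connective, passing to connective covers yields an $E_\infty$-equivalence $R' \simeq \tau_{\ge 0} R_{K(1)\vee K(2)} = R$ compatible with the identifications of homotopy groups with $\comp A_p$, which is the asserted uniqueness.

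The main obstacle is the explicit homotopy computation in the existence half: one must verify both that the Mayer--Vietoris difference map is surjective in positive degrees and that the resulting pullback of graded rings coincides with $\comp A_p$ in nonnegative degrees. Both are elementary, but they require careful use of the presentations in Remark~\ref{rmk:ringcalculations}, of the relation $v_1^{p+1} = xv_2$ for the grading bookkeeping, and of the $p$-adic convergence conditions built into $\comp{\mb Z_p[x^{-1}]}_p$ and $\comp{\mb Z_p\laur{x}}_p$. (In fact the analysis shows that $\comp A_p \to A_{K(2)}\times_{A_{K(2),K(1)}} B$ is an isomorphism in all degrees, while the difference map fails to be surjective in sufficiently negative degrees, which is precisely why one must pass to the connective cover.)
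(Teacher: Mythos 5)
Your proof is correct and follows essentially the same route as the paper: a Mayer--Vietoris computation of $\pi_* R_{K(1)\vee K(2)}$ showing the difference map is surjective in nonnegative degrees and the ring pullback there is $\comp{A}_p$, followed by Lemma~\ref{lem:fgl-pullback} to get the orientation, and for uniqueness a localize-and-match argument invoking the uniqueness clauses of Theorems~\ref{thm:k2local-exists} and \ref{thm:k1local-exists}. Your degreewise rank-one $\mb Z_p\pow{x}$-module bookkeeping is a clean repackaging of the paper's explicit formulas, and you make a couple of points more explicit than the paper (that the two orientations agree in $\pi_*F$ before applying Lemma~\ref{lem:fgl-pullback}, and that the map $R'\to L_{K(1)\vee K(2)}R'$ is a $\pi_*$-iso in nonnegative degrees so passing to connective covers finishes the uniqueness), but the argument is substantively identical.
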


\begin{proof}
The forgetful functor from $E_\infty$-ring spectra to spectra
preserves homotopy limits.  The homotopy groups of all spectra
involved in the solid part of (\ref{diagram}) are concentrated
in even degrees, and $\pi_*(\iota)$ and $\pi_*(\alpha^{chrom})$
are injective.  Therefore, for all $n\in\mb Z$ we have
\[ \pi_{2n}R_{K(1)\vee K(2)}=\pi_{2n}R_{K(2)}\cap \pi_{2n}R_{K(1)}\mbox{ and }\]
\[ \pi_{2n-1}R_{K(1)\vee K(2)}=\coker(\pi_{2n}R_{K(2)}\oplus\pi_{2n}R_{K(1)}\to\pi_{2n}R_{K(2),K(1)}).\]

Explicitly, we have the following.
\begin{align*}
\pi_{2n} R_{K(2)} &= \left\{\sum_{k+(p+1)l = n,\ k \geq 0}c_{k,l} v_1^k
    v_2^l\ \middle |\ c_{k,l} \in \mb Z_p\right\}\\
&=  \left\{
    \sum_{0\leq k\equiv n\, (p+1)} c_k v_1^kv_2^{\frac{n-k}{p+1}}\,\middle|\, c_k\in\mb Z_p\right\}
\end{align*}
\begin{align*}
\pi_{2n} R_{K(1)} &= \left\{\sum_{k+(p+1)l = n,\ l \geq 0}c_{k,l} v_1^k
    v_2^l\ \middle |\ c_{k,l} \in \mb Z_p,\ c_{k,l} \rightarrow 0\text{ as } k\rightarrow -\infty\right\}\\
&=\left\{ \sum_{0\leq l}c_{l}v_1^{n-(p+1)l}v_2^l\,\middle|\, c_l\in\mb Z_p, \, c_l\to 0\right\}
\end{align*}
\begin{align*}
\pi_{2n} R_{K(2),K(1)} &= \left\{\sum_{k+(p+1)l = n}c_{k,l} v_1^k
v_2^l\ \middle |\ c_{k,l} \in \mb Z_p,\ c_{k,l} \rightarrow 0\text{ as }
k\rightarrow -\infty\right\}\\
&=\left\{ \sum_{k\in n+(p+1)\mb Z}
  c_kv_1^kv_2^{\frac{n-k}{p+1}}\,\middle|\, c_k\in\mb Z_p, \, c_k\to 0\mbox{ for } k\to -\infty\right\}.
\end{align*}

If $n \geq 0$, every element in $\pi_{2n} R_{K(2), K(1)}$ is a sum of
elements in the images of the other two groups. 
Therefore, for all $n\ge 0$ we have $\pi_{2n-1}R_{K(1)\vee K(2)}=0$
and
\[
\pi_{2n} R_{K(1) \vee K(2)} = \left\{\sum_{k+(p+1)l = n,\ k \geq 0,\ l \geq 0}c_{k,l} v_1^k
    v_2^l\,\middle|\,c_{k,l} \in \mb Z_p\right\},
\]
which is the portion of $\comp{A}_p \cong \mb Z_p[v_1,v_2]$
concentrated in degree $2n$.  Hence the connective cover $R_{K(1)\vee
K(2)}[0,\infty)$, which we denote by $\comp{R}_p$, is an
$E_\infty$-ring spectrum with homotopy $\pi_*\comp{R}_p\simeq
\comp{A}_p$.  Since this is concentrated in even dimensions,
$\comp{R}_p$ is complex orientable and it remains to see that the
resulting formal group is the one given in the realization problem.

As $MU$ is connective, orientations of $\comp{R}_p$ are in
bijective correspondence with orientations of $R_{K(1) \vee K(2)}$.
Moreover, we know that the formal group laws of $R_{K(1)}$ and
$R_{K(2)}$ are the formal group laws pushed forward from $A$.
Applying Lemma~\ref{lem:fgl-pullback}, we find that $\comp{R}_p$ is
complex orientable with formal group law pushed forward from the map
$A \to \pi_* \comp{R}_p$.

This shows that $\comp{R}_p$ is a solution to the realization
problem for $\comp{A}_p$, and we now address uniqueness.

Given any other solution $R'$, Theorem~\ref{thm:k2local-exists}
shows that there exists an equivalence of realizations $R_{K(2)} \to
R'_{K(2)}$, and Theorem~\ref{thm:k1local-exists} shows that there
exists an equivalence $R_{K(1)} \to R'_{K(1)}$ of realizations
compatible with the chromatic attaching maps.  Taking the connective
cover of the induced equivalence $R_{K(1) \vee K(2)} \to R'_{K(1) \vee
K(2)}$ gives the desired result.
\end{proof}

\section{$K(0)$-local realization}
\label{sec:k0-localization}

\begin{thm}
\label{thm:newmain}
Suppose that $(A, \mb G)$ is a realization problem.
Then $(A, \mb G)$ has a solution if and only if the degree-zero
portion $(A_{K(1)})_0\subseteq (A_{K(2),K(1)})_0$ is stable under the
power operation $\theta$. If such a solution exists, it is unique
up to equivalence.
\end{thm}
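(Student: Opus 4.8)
The plan is to prove the equivalence by a chromatic/arithmetic fracture argument: necessity follows from naturality of $\theta$, while existence and uniqueness are obtained by gluing the $p$-complete realization of $\comp{A}_p$ produced in Proposition~\ref{prop:12homotopy} to a rational realization built by rational homotopy theory, along an arithmetic (Hasse) square.

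\emph{Necessity.} Suppose $(A,\mb G)$ has a solution $R$. By Remark~\ref{rem:homotopyoflocalizations}, $L_{K(2)}R$ realizes $A_{K(2)}$, so the uniqueness clause of Theorem~\ref{thm:k2local-exists} identifies it with $R_{K(2)}$; consequently $L_{K(1)}L_{K(2)}R$ realizes $A_{K(2),K(1)}$ and $L_{K(1)}R$ realizes $A_{K(1)}$. Localization furnishes an $E_\infty$-ring map $L_{K(1)}R\to L_{K(1)}L_{K(2)}R$ which on $\pi_0$ is the canonical inclusion $(A_{K(1)})_0\subseteq(A_{K(2),K(1)})_0$ appearing in the statement. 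Since the operation $\theta$ on $\pi_0$ of a $K(1)$-local $E_\infty$-ring spectrum is natural in maps of $E_\infty$-ring spectra, $\theta$ on $(A_{K(2),K(1)})_0$ carries the subring $(A_{K(1)})_0$ into itself.

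\emph{Existence.} Assume now that $(A_{K(1)})_0$ is $\theta$-stable in $(A_{K(2),K(1)})_0$. Proposition~\ref{prop:12homotopy} then yields a connective, complex oriented $E_\infty$-ring spectrum $\comp R_p$ with $\pi_*\comp R_p\cong\comp A_p=\mb Z_p[v_1,v_2]$ and formal group law the image of $\mb G$; as its homotopy is degreewise finitely generated free over $\mb Z_p$, this spectrum is $p$-complete. Next I would realize the rational corner: $A_{\mb Q}=\mb Q[v_1,v_2]$ is the homotopy of the free rational $E_\infty$-ring spectrum $R_{\mb Q}$ on two generators in the degrees of $v_1$ and $v_2$, and by rigidity of the rational category $R_{\mb Q}$ can be equipped with a complex orientation $MU\to R_{\mb Q}$ inducing on homotopy the map $MU_*\to A\to A_{\mb Q}$ classifying $\mb G$. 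Using freeness of $R_{\mb Q}$ one constructs an $E_\infty$-ring map $\alpha^{arith}\co R_{\mb Q}\to(\comp R_p)_{\mb Q}$ sending each $v_i$ to $v_i$; both composites $MU\to R_{\mb Q}\to(\comp R_p)_{\mb Q}$ and $MU\to\comp R_p\to(\comp R_p)_{\mb Q}$ are complex orientations of the rational even ring spectrum $(\comp R_p)_{\mb Q}$ classifying the image of $\mb G$, hence coincide, so $\alpha^{arith}$ is compatible with the chosen orientations. Let $R$ be the homotopy pullback
\[
\xymatrix{
R\ar[r]\ar[d] & \comp R_p\ar[d]\\
R_{\mb Q}\ar[r]^(.4){\alpha^{arith}} & (\comp R_p)_{\mb Q}.
}
\]
The Mayer--Vietoris sequence---using that all four spectra have homotopy concentrated in even degrees, that the two maps into the lower-right corner are injective on homotopy, and that $\mb Q_p=\mb Q+\mb Z_p$---shows $R$ is connective with $\pi_*R\cong A_{\mb Q}\cap\comp A_p=A$ in each degree. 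Lemma~\ref{lem:fgl-pullback} assembles the two orientations into an orientation $MU\to R$, and, $A$ being torsion-free and hence embedded in $A_{\mb Q}$, the resulting formal group law over $A$ is exactly $\mb G$. Thus $R$ solves $(A,\mb G)$.

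\emph{Uniqueness.} If $R$ and $R'$ both solve $(A,\mb G)$, then, $\pi_*R$ being degreewise finitely generated over $\mb Z_{(p)}$, each is the homotopy pullback of its rationalization and its $p$-completion over the rationalization of the latter. The $p$-completions are solutions to the realization problem for $\comp A_p$, hence equivalent by Proposition~\ref{prop:12homotopy}; the rationalizations are rational $E_\infty$-ring spectra realizing $A_{\mb Q}$ with the image of $\mb G$, hence equivalent by rigidity of the rational category; and under these equivalences the two arithmetic attaching maps correspond, since each induces on homotopy the canonical inclusion $A_{\mb Q}\hookrightarrow A_{\mb Q}\otimes\mb Q_p$ and the common source $R_{\mb Q}$ is free. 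Passing to homotopy pullbacks produces an equivalence $R\simeq R'$ respecting the identifications of homotopy with $A$.

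\emph{Main obstacle.} The substantive input is already contained in Theorems~\ref{thm:k2local-exists}, \ref{thm:realizability} and Proposition~\ref{prop:12homotopy}, so the work here is one of compatibility rather than construction. The delicate point is to arrange the complex orientations on the rational corner, the $p$-complete corner, and the fracture product so that they agree on the nose (not merely up to strict isomorphism), so that Lemma~\ref{lem:fgl-pullback} delivers an orientation of $R$ whose formal group law is literally $\mb G$, together with the parallel bookkeeping that the homotopy pullback returns $A$ and not a strictly larger ring. Both are controlled by the rigidity of the rational homotopy category, the torsion-freeness of $A$, and the decomposition $\mb Q_p=\mb Q+\mb Z_p$.
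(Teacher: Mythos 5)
Your proof is correct and takes essentially the same route as the paper's: arithmetic fracture of the $p$-complete solution $\comp{R}_p$ from Proposition~\ref{prop:12homotopy} against a free rational $E_\infty$-$\eilm{\mb Q}$-algebra, with Lemma~\ref{lem:fgl-pullback} supplying the orientation on the pullback, and uniqueness obtained by matching the $p$-complete and rational corners separately. You merely spell out two steps the paper leaves implicit (the naturality-of-$\theta$ argument for necessity and the arithmetic fracture square for a general solution $T$), without changing the underlying strategy.
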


\begin{proof}
It is clear that closure under $\theta$ is a necessary condition, so let 
us assume it holds and establish that our realization problem has
a solution, unique up to equivalence.
Let $R_{\mb Z_p}=\comp{R}_p$ be the solution to the realization
problem for $\comp{A}_p$ from Proposition~\ref{prop:12homotopy}, and
define $R_{\mb Q_p}$ to be the rationalization of $R_{\mb Z_p}$.  Let 
$R_{\mb Q}$ be the free $E_\infty$-$\eilm{\mb Q}$-algebra on 
$S^{2p-2}\vee S^{2p^2-2}$. Then $\pi_*R_{\mb Q}$ is isomorphic
to $A_{K(0)} \cong \mb Q[v_1,v_2]$, and $R_{\mb Q}$ is a solution
to the realization problem for $A_{K(0)}$.

Proposition~\ref{prop:12homotopy} and rational homotopy theory imply
that there exists an arithmetic attaching map of $E_\infty$-$\eilm{\mb
  Q}$-algebras
\[
\alpha^{arith}: R_{\mb Q} \to R_{\mb Q_p}
\]
inducing the inclusion $\mb Q[v_1,v_2] \to \mb Q_p[v_1,v_2]$ on
homotopy groups.

Let $R$ be the $E_\infty$-ring spectrum
formed as the homotopy pullback in the following diagram:
\[
\xymatrix{
R \ar@{.>}[r] \ar@{.>}[d] & R_{\mb Z_p} \ar[d] \\
R_{\mb Q} \ar[r]^(.4){\alpha^{arith}} & R_{\mb Q_p}
}
\]
In a similar manner to the proof of Proposition~\ref{prop:12homotopy},
we find that the pullback diagram defining $R$ gives rise to the
following Mayer-Vietoris square on homotopy groups:
\[
\xymatrix{
  R_* \ar[r] \ar[d] &
  \mb Z_p[v_1,v_2] \ar@{^{(}->}[d]\\
  \mb Q[v_1,v_2] \ar@{^{(}->}[r] &
  \mb Q_p[v_1,v_2]
}
\]
Observe that, since $\mb Q_p=\mb Q+\mb
Z_p$, we have $\pi_{2n-1}R=0$ for all $n\ge 0$.  The above diagram is
then a pullback diagram, and the homotopy groups of
$R$ map isomorphically to the subring which is the image of $A = \mb
Z_{(p)}[v_1,v_2]$.
The formal group laws of $R_{\mb Z_p}$ and $R_{\mb Q}$ are the
formal group laws pushed forward from $A$.  Applying
Lemma~\ref{lem:fgl-pullback}, we find that $R$ is complex orientable
with formal group law pushed forward from the isomorphism $A \to
R_*$.  Therefore, $R$ is a solution to the realization problem for
$A$ itself.

Given any other solution $T$ to the realization problem,
Proposition~\ref{prop:12homotopy} implies that there exists an
equivalence $R_{\mb Z_p} \to \comp{T}_p$ of realizations for
$\comp{A}_p$.  As $R_{\mb Q}$ is free, in order to construct a
map of $E_\infty$-$\mb{HQ}$-algebras $R_{\mb Q} \to T_{\mb Q}$
compatible with the arithmetic attaching maps it suffices for the
images of the generators $v_1, v_2$ in $\pi_*T_{\mb Q_p}$
to lift to $\pi_* T_{\mb Q}$.  However, this follows
because an equivalence of realizations preserves the subring of
$\comp{A}_p$ which is the image of $A$.
\end{proof}

\section{An algebraic description of $\theta$ and the proof of Theorem \ref{thm:main}}\label{sec:proofofmain}

In Section~\ref{subsec:algebraictheta} we will give an
algebraic characterization of the power operation $\theta$ figuring in
Theorem \ref{thm:newmain}.  Section~\ref{subsec:proofofmainstability}
uses this, and the moduli of generalized elliptic curves with
$\Gamma_1(3)$-level structure, to produce a realization problem
at the prime 2 meeting the assumptions of Theorem
\ref{thm:newmain}.  This will complete the proof of
Theorem~\ref{thm:main}.

\subsection{Computing $\theta$}\label{subsec:algebraictheta}
Let $(A,\mb G)$ be a (generalized $\BPP{2}$-)realization problem.
Crucially using the Goerss-Hopkins-Miller theorem, we obtained an
operation $\theta$ on $(A_{K(2),K(1)})_0$, which is a
mixed-characteristic discrete valuation ring.  Since this ring is
torsion-free, $\theta$ is determined by the ring endomorphism
\[
x \mapsto (\psi^p)^{top}(x) := x^p+p\cdot\theta(x)
\]
of $(A_{K(2),K(1)})_0$.  This map $(\psi^p)^{top}$ is a lift
of Frobenius: a ring endomorphism with $(\psi^p)^{top}(x)\equiv x^p$
mod $p$.  In this section we will explain how to define
$(\psi^p)^{top}$ without homotopy theory, and in particular
without solving the realization problem for $A_{K(2)}$.  This is
essentially a consequence of work of Ando-Hopkins-Strickland relating
the power operations on Lubin-Tate spectra to their naturally defined
descent data for level structures \cite[Section
12.4]{ando-hopkins-strickland}.

Recall the associated Lubin-Tate ring $B \supseteq A_{K(2)}$ from
  Definition~\ref{def:associated-lubintate}, a $G$-Galois
extension for the group $G=\mb
F_{p^2}^\times\rtimes\mathrm{Gal}(\mb F_{p^2}/\mb
F_p)$.  Form the pushout of rings
\[
\xymatrix{ A_{K(2)}\ar@{^(->}[rr]^G \ar[dd] & & B\ar[dd]\\
 & & \\ A_{K(2),K(1)} \ar@{^(->}[rr]^G & & \tilde{B}}
\]
along the canonical ring homomorphism $A_{K(2)}\to A_{K(2),K(1)}$. The
base change $\mb G_B$ over $B$ of the given formal group $\mb G$ is a
Lubin-Tate formal group by Remark~\ref{rmk:lubintate}, and from
Remark~\ref{rmk:ringcalculations} we see that $\tilde{B}\cong
\comp{(v_1^{-1} B)}_{p}$.  This shows that $\mb G_B$ is a
$p$-divisible group of height $2$ which is a universal deformation of
its special fiber.  The base change $\mb G_{\tilde{B}}$ of the 
$p$-divisible group $\mb G_B$ to $\tilde{B}$ is ordinary
\cite[Lemma II.1.1, 4]{harristaylor}, so there is
an extension of $p$-divisible groups over $\tilde{B}$
\[
0 \to\mb G^{for}_{\tilde{B}}\stackrel{\iota}{\to} \mb G_{\tilde{B}}\to \mb G^{et}_{\tilde{B}}\to 0
\]
with $\mb G^{for}_{\tilde{B}}$ (resp. $\mb G^{et}_{\tilde{B}}$) a
formal (resp. \'etale) $p$-divisible group of height $1$.  The kernel
$\mb G^{for}_{\tilde{B}}[p] \subseteq \mb G^{for}_{\tilde{B}}$ of
multiplication by $p$ is the unique subgroup of $\mb
G^{for}_{\tilde{B}}$ of order $p$.  We obtain a subgroup $C$ of
order $p$ as the image $\iota(\mb G^{for}_{\tilde{B}}[p])\subseteq
\mb G_{\tilde{B}}$, called the canonical subgroup. By
construction, the mod-$p$ reduction of the quotient of $\mb
G_{\tilde{B}}$ by its canonical subgroup has canonical isomorphisms
\[
\overline{\mb G_{\tilde{B}}/C}\cong \overline{\mb
  G_{\tilde{B}}}^{(p)}\cong \overline{\mb G_{\tilde{B}}}.
\]
(As in Section~\ref{subsec:heightone}, $\overline{\mb
G_{\tilde{B}}}^{(p)}$ denotes the pullback along the Frobenius
isogeny.)  The latter isomorphism occurs because $\overline{\mb
G_{\tilde{B}}}$ is defined over $\mb F_p$.  We conclude from
\cite[Proposition 7.1]{eike} that there is a unique lift of Frobenius
$\psi\co \tilde{B}\to\tilde{B}$ such that there is an isomorphism of
$p$-divisible groups over $\tilde{B}$
\begin{equation}\label{eq:characterizingpsi}
\psi^*\mb G_{\tilde{B}}\cong \mb G_{\tilde{B}}/C.
\end{equation}
Since $\mb G_{\tilde{B}}$ comes by base change from $A_{K(2),K(1)}
\subseteq\tilde{B}$, $\psi$ restricts to a lift of Frobenius
$(\psi^p)^{alg}$ on $A_{K(2),K(1)}$.  The construction of
$(\psi^p)^{alg}$ involves no homotopy theory, so the following result
achieves the goal of this section.

\begin{prop}\label{prop:algcompoftheta}
In the above situation we have $(\psi^p)^{top}=(\psi^p)^{alg}$.
\end{prop}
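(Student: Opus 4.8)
The plan is to compute $(\psi^p)^{top}$ by descending the power operation of the even-periodic Lubin--Tate spectrum underlying $R_{K(2)}$ to its $K(1)$-localization, where the descent-for-level-structures package of Section~\ref{sec:homogeneous} applies, and then to recognize the resulting lift of Frobenius as the one used to define $(\psi^p)^{alg}$. First I would recall from the proof of Theorem~\ref{thm:k2local-exists} that, after a choice of lift $v$ of $v_2$, one has the associated Lubin--Tate ring $B$ and group $G = \mb F_{p^2}^\times\rtimes\mathrm{Gal}(\mb F_{p^2}/\mb F_p)$, the even-periodic Lubin--Tate spectrum $E$ with $E_*\cong B$, and $R_{K(2)}\simeq E^{hG}$. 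The ring $E^0\cong\mb W(\mb F_{p^2})\pow{u_1}$ is $p$-torsion-free, complete local with maximal ideal containing $p$, its residue field $\mb F_{p^2}$ is perfect, and its formal group has constant height $2$ modulo the maximal ideal, so $E$ satisfies the assumptions of Section~\ref{sec:homogeneous}. Since $|G| = 2(p^2-1)$ is prime to $p$, the spectrum $X^{hG}$ is a retract of $X$ for any $G$-spectrum $X$, so $L_{K(1)}$ commutes with $(-)^{hG}$ and $F = R_{K(2),K(1)}\simeq (E')^{hG}$ for $E' = L_{K(1)}E$. By the same reasoning as in the proof of Proposition~\ref{prop:stability} together with Remark~\ref{rmk:ringcalculations} we have $(E')_*\cong\comp{(v_1^{-1}B)}_p\cong\tilde B$, the ring from the pushout square of Section~\ref{subsec:algebraictheta}, and the homotopy fixed point spectral sequence for $(E')^{hG}$ degenerates to give $F_*\cong\tilde B^G = A_{K(2),K(1)}$.

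The canonical $E_\infty$-ring map $F = (E')^{hG}\to E'$ is therefore an inclusion on $\pi_0$, namely $(A_{K(2),K(1)})_0\hookrightarrow\tilde B$. Since the operation $\theta$ of Section~\ref{subsect:obstruction} is natural in $E_\infty$-ring spectra, $(\psi^p)^{top}$ on $(A_{K(2),K(1)})_0 = F_0$ is the restriction of the lift of Frobenius $x\mapsto x^p + p\,\theta(x)$ determined on $\tilde B$ by the $E_\infty$-structure of the $K(1)$-local ring $E'$ (whose $\pi_0 = \tilde B$ is $p$-torsion-free and carries a formal group of constant height $1$). It thus suffices to compute this lift of Frobenius on $\tilde B$.

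For that I would invoke equation~(\ref{eq:toplevel}), applied with the even-periodic $E$ in the role of the spectrum called ``$F$'' there and $E'$ in the role of ``$E$'': it gives an isomorphism $((\psi^p)^{top})^*\mb G^{for}_{\tilde B}\cong\mb G^{for}_{\tilde B}/C$ over $\tilde B$, where $\mb G^{for}_{\tilde B}$ is the formal group carried by $\tilde B$ --- which is precisely the formal (connected) part of the ordinary height-$2$ $p$-divisible group $\mb G_{\tilde B}$ appearing in Section~\ref{subsec:algebraictheta} --- and $C$ is its unique subgroup of rank $p$. That uniqueness is valid because $\mb G^{for}_{\tilde B}$ has constant height $1$ and $\tilde B$ is $p$-torsion-free; it identifies $C$ with the canonical subgroup $\iota(\mb G^{for}_{\tilde B}[p]) = \mb G^{for}_{\tilde B}[p]$ of Section~\ref{subsec:algebraictheta} (recall $\iota$ is a closed immersion). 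Hence $(\psi^p)^{top}$ on $\tilde B$ is a lift of Frobenius satisfying the defining property~(\ref{eq:characterizingpsi}) of the extension $\psi$ of $(\psi^p)^{alg}$, and \cite[Proposition~7.1]{eike} --- the uniqueness of such a lift --- gives $(\psi^p)^{top} = \psi$ on $\tilde B$. Restricting along $A_{K(2),K(1)}\subseteq\tilde B$ yields $(\psi^p)^{top} = (\psi^p)^{alg}$, which is Proposition~\ref{prop:algcompoftheta}.

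The main obstacle I anticipate is the bookkeeping in the last two paragraphs: verifying that the lift of Frobenius extracted from the $K(1)$-local power operation on $(E')_0$ really does agree with the one on $F_0$ --- that is, that the identification $F\simeq (E')^{hG}$ and the resulting inclusion on $\pi_0$ are compatible with the $\psi$-$\theta$-algebra structures, and likewise for the Igusa-tower identifications implicit in~(\ref{eq:toplevel}) --- and passing cleanly between the formal-group isomorphism supplied by~(\ref{eq:toplevel}) and the $p$-divisible-group isomorphism~(\ref{eq:characterizingpsi}). For the latter one uses, as in Section~\ref{subsec:heightone}, that the étale quotient $\mb G^{et}_{\tilde B}$ and the extension data over $\tilde B$ are rigid, so that an isomorphism of connected parts determines~(\ref{eq:characterizingpsi}). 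Once these routine compatibilities are in place, the remaining steps are formal.
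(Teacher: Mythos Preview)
Your overall strategy is exactly the paper's: pass to the Lubin--Tate spectrum $E$, use naturality of $\theta$ along $F\to E'$ to compute $(\psi^p)^{top}$ on $\tilde B$, and then invoke equation~(\ref{eq:toplevel}) together with the uniqueness in~(\ref{eq:characterizingpsi}). The paper's own proof is just the terse version of this, writing the extension to $\tilde B$ as $(\psi^p)^{top}\otimes\mathrm{id}_{\tilde B}$.

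There is, however, one genuine misstep. In your reading of equation~(\ref{eq:toplevel}) you replace $\mb G_{\tilde B}$ by the connected part $\mb G^{for}_{\tilde B}$ and take $C=\mb G^{for}_{\tilde B}[p]$. But $\mb G_{\tilde B}$ there is the pullback to $\tilde B$ of the formal group $\mb G_B$ of the height-$2$ spectrum $F=E$; via Theorem~\ref{thm:level-structures} and Proposition~\ref{prop:lubin-tate-descent} this is exactly the height-$2$ $p$-divisible group appearing in~(\ref{eq:characterizingpsi}), and $C$ is its canonical subgroup. With that reading, (\ref{eq:toplevel}) \emph{is} (\ref{eq:characterizingpsi}) and no further argument is needed.

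Your proposed rigidity bridge --- upgrading an isomorphism of connected parts to~(\ref{eq:characterizingpsi}) --- does not work, and this is precisely the content of the Remark immediately following Proposition~\ref{prop:algcompoftheta}. By the rigidity-of-formal-tori paragraph at the start of Section~\ref{subsec:heightone}, \emph{every} lift of Frobenius $\psi$ on $\tilde B$ satisfies $\psi^*\mb G^{for}_{\tilde B}\cong \mb G^{for}_{\tilde B}/\mb G^{for}_{\tilde B}[p]$, so that condition cannot single out $(\psi^p)^{alg}$. The non-formal (\'etale) part of $\mb G_{\tilde B}$ is exactly what makes the characterization~(\ref{eq:characterizingpsi}) rigid enough; you must carry the full height-$2$ object through~(\ref{eq:toplevel}) rather than pass to the connected part and try to recover it afterwards.
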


\begin{proof}
It is enough to see that $\psi$ is equal to
$(\psi^p)^{top}\otimes\mathrm{id}_{\tilde{B}}$. This is true
because, by equation (\ref{eq:toplevel}) in
Section~\ref{subsec:heightone}, the topologically constructed lift
of Frobenius $(\psi^p)^{top}\otimes\mathrm{id}_{\tilde{B}}$
satisfies property (\ref{eq:characterizingpsi}), which characterizes
the algebraically constructed lift $\psi$.
\end{proof}

\begin{rmk}
It is indispensable to use non-formal $p$-divisible groups to obtain
an algebraic characterization of $(\psi^p)^{top}$ as above.  The
base-change of the {\em formal} group $\mb G_B$ to $\tilde{B}$ is
$\mb G_{\tilde{B}}^{for}$, and the first results in Section
\ref{subsec:heightone} imply that one cannot recover
$(\psi^p)^{alg}$ from knowledge of $\mb G_{\tilde{B}}^{for}\subseteq
\mb G_{\tilde{B}}$ alone.
\end{rmk}

\subsection{Proof of Theorem \ref{thm:main}}\label{subsec:proofofmainstability}

In this section we complete the proof of Theorem \ref{thm:main}
by constructing a realization problem $(A,\mb G)$
at the prime $2$ which meets the assumptions of Theorem
\ref{thm:newmain}.

Define $A$ to be the graded ring $\mb Z_{(2)}[a,b]$ with grading
$|a|=2,|b|=6$.  Consider the following Weierstrass equation
over $A$:
\begin{equation}\label{eq:gamma13}
y^2+a\cdot xy+b\cdot y= x^3.
\end{equation}

We remark this is a universal generalized elliptic curve with a
$\Gamma_1(3)$-level structure; this connection will be 
exploited in \cite{tmforientation}.

Equation (\ref{eq:gamma13}) determines a graded formal group law
$\mb G$ over $A$ with respect to the coordinate $-x/y$.

\begin{prop}
\label{prop:realproblem}
This pair $(A,\mb G)$ is a realization problem at the prime $2$.
\end{prop}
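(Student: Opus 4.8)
The plan is to verify the four conditions of Definition~\ref{defn:realization} for $p=n=2$. The first two are immediate from the grading: $A=\mb Z_{(2)}[a,b]$ with $|a|=2$, $|b|=6$ is concentrated in nonnegative degrees, and its piece in degree $2m$ is the free $\mb Z_{(2)}$-module on the monomials $a^ib^j$ with $i+3j=m$. So the whole statement reduces to showing that $2,v_1,v_2$ is a regular sequence in $A$ with $A/(2,v_1,v_2)\cong\mb F_2$ concentrated in degree zero.

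Since $A$ is a polynomial $\mb Z_{(2)}$-algebra, $2$ is a nonzerodivisor and $A/(2)=\mb F_2[a,b]$. By degree count $v_1\in A_2=\mb Z_{(2)}a$ and $v_2\in A_6=\mb Z_{(2)}a^3\oplus\mb Z_{(2)}b$, so it is enough to show that the image $\bar v_1$ of $v_1$ in $\mb F_2[a,b]$ equals $a$ and that the image of $v_2$ in $A/(2,v_1)=\mb F_2[a,b]/(a)=\mb F_2[b]$ equals $b$. Granting this, $a$ is a nonzerodivisor in $\mb F_2[a,b]$ with quotient $\mb F_2[b]$, $b$ is a nonzerodivisor in $\mb F_2[b]$ with quotient $\mb F_2$, and this yields both the regularity of $2,v_1,v_2$ and the identification $A/(2,v_1,v_2)=\mb F_2$.

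To pin down $\bar v_1$ and $v_2\bmod(2,v_1)$ I would compute the leading terms of the $2$-series $[2]_{\mb G}(t)$ of the Weierstrass formal group law in the coordinate $t=-x/y$. Over an $\mb F_2$-algebra $[2]_{\mb G}(t)$ is a power series in $t^2$, say $[2]_{\mb G}(t)=c_1t^2+c_2t^4+\cdots$, and for the Araki (equivalently Hazewinkel) generators one has $v_1\equiv c_1\pmod 2$ and, because $p^2=2p$ exactly when $p=2$, $v_2\equiv c_2\pmod{(2,v_1)}$. Writing $x=t^{-2}+O(t^{-1})$ for the expansion of $x$ in $t$, the standard duplication formula on (\ref{eq:gamma13}) reduces mod $2$ to
\[
x(2P)=\frac{x^4+ab\,x^2}{a^2x^2+b^2},
\]
whose right-hand side, after substituting $x=t^{-2}+O(t^{-1})$, has leading term $a^{-2}t^{-4}$ over the fraction field of $\mb F_2[a,b]$; since the left-hand side is $x([2]_{\mb G}(t))=(c_1t^2+\cdots)^{-2}+\cdots$, comparing leading Laurent coefficients forces $c_1=a$. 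Reducing once more modulo $a$, the curve degenerates to $y^2+by=x^3$, the displayed fraction collapses to $x(2P)=x^4/b^2$ with leading term $b^{-2}t^{-8}$, and the same comparison — now with the $t^2$-term absent, since $v_1=a=0$ there — forces $c_2=b$. Hence $\bar v_1=a$ and $v_2\equiv b\pmod{(2,a)}$, which is what was needed.

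The only delicate point is the bookkeeping linking the coefficients of the $2$-series to the generators $v_i$, in particular that the second reduction must be taken modulo $(2,v_1)$, where the coincidence $p^2=2p$ at $p=2$ makes $c_2$ represent $v_2$; I do not expect a genuine obstacle here. This routine verification is exactly what lets Theorem~\ref{thm:newmain} reduce the existence of $\tmf_1(3)_{(2)}$ to the single stability property of $\theta$.
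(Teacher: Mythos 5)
Your proof is correct. The paper settles this with a one-line citation to \cite[Lemma 1]{laures-k1-local-tmf}, which records that the $2$-primary Hazewinkel generators $v_1, v_2$ map to $a, b$; your argument carries out the computation behind that citation. The reduction of Definition~\ref{defn:realization} to the two congruences $\bar v_1 = a$ in $\mb F_2[a,b]$ and $\bar v_2 = b$ in $\mb F_2[b]$ is sound, and the duplication-formula computation of the leading terms of the $2$-series is the natural route. Two small points worth making explicit, though neither is a gap: first, the formal group law of a Weierstrass curve in the coordinate $-x/y$ is not $p$-typical, so reading $v_1 \bmod 2$ and $v_2 \bmod (2,v_1)$ off the coefficients of $t^{p}$ and $t^{p^2}$ in $[2]_{\mb G}(t)$ tacitly uses that these coefficients are invariant under strict isomorphism modulo the indicated ideals (since a strict isomorphism $\sigma(t)=t+O(t^2)$ does not alter the lowest-degree term of $\sigma^{-1}\circ[p]_{\mb G}\circ\sigma$), which is what lets you avoid typifying $\mb G$; second, the leading-coefficient comparison in the fraction field yields $c_1^2 = a^2$ and, after reduction mod $a$, $c_2^2 = b^2$, from which $c_1 = a$ and $c_2 = b$ follow because $\mb F_2[a,b]$ is a domain of characteristic $2$. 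The paper's citation buys brevity; your self-contained computation shows the verification needs nothing beyond the Weierstrass duplication formula.
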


\begin{proof}
From \cite[Lemma 1]{laures-k1-local-tmf}, the
$2$-primary Hazewinkel generators $v_1$ and $v_2$ are
mapped to $a$ and $b$ respectively.
\end{proof}

Resuming notation from Section~\ref{subsec:algebraictheta},
our remaining task is to compute $\theta$ (or equivalently
$(\psi^2)^{alg}$) for the present realization problem.  We observe
that by defining $t$ to be $au^{-1}$, where $a$ is the image of
$a\in A$ and $u\in B_2$ satisfies $u^3 = b$, we have
\[
B_0=\mb W(\mb F_4)\pow{t}.
\]
Over $B_0$, the Weierstrass curve of
Equation~(\ref{eq:gamma13}) becomes isomorphic to the
following curve:
\[
E: y^2+t\cdot xy+y=x^3
\]
The key computational result, which at present lacks an analogue
at primes other than $2$, is the following.

\begin{prop}[{\cite[Section 3]{rezk-height2}}]
\label{prop:theta}
The universal isogeny of degree $2$ with domain $E$ is defined
over $B_0[d]/(d^3 - td - 2)$, and has range the elliptic curve
\[
y^2 + (t^2 + 3d - td^2) xy + y = x^3.
\]
The kernel of this isogeny is generated by the $2$-torsion point with
coordinates $(x/y, 1/y) = (-d, d^3)$.
\end{prop}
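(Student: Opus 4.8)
The statement is purely algebraic and is proved by an explicit computation with the Weierstrass equation, in the style of V\'elu's formulas for quotient isogenies; this is carried out in \cite[Section 3]{rezk-height2}, and the plan is to reconstruct it in two stages.

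First I would identify the base ring $B_0[d]/(d^3-td-2)$ together with the kernel point. By Theorem~\ref{thm:level-structures}, the universal $B_0$-algebra over which the formal group $\hat{\mathbb{G}}=\hat E$ acquires a subgroup scheme of rank $p=2$ is $E^0(B\Sigma_2)/I$; and since a rank-$2$ subgroup of a one-dimensional formal group is the same datum as a nonzero $2$-torsion section $P$ (the subgroup being $\{O,P\}$), it suffices to describe the non-identity part of $\hat E[2]$ directly from $E\colon y^2+txy+y=x^3$. Passing to the coordinates $s=x/y$, $w=1/y$ --- in which $s$ is a parameter for $\hat E$ at the origin and the curve reads $w(1+ts+w)=s^3$ --- the condition $P=-P$ (equivalently $2y+tx+1=0$) becomes a linear relation among $s$, $w$ and the constant $2$; substituting this into the curve eliminates $w$ and leaves, in the appropriately signed parameter $d$ (recall the orientation $-x/y$), a single cubic relation $d^3-td-2=0$ with $1/y$ expressed through $d$. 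Hence $E^0(B\Sigma_2)/I\cong B_0[d]/(d^3-td-2)$, free of rank $p+1=3$ over $B_0$ as the Ando--Hopkins--Strickland theory predicts, and the tautological kernel is generated by the section with $(x/y,1/y)=(-d,d^3)$.

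Second I would run V\'elu's formulas on the kernel $C=\{O,T\}$, where $T$ is the $2$-torsion point above, with Weierstrass coordinates $(x_0,y_0)=(-d^{-2},d^{-3})$. Because $T$ is $2$-torsion the $g^y$-contribution vanishes, so the correction sums collapse to $v=3x_0^2-ty_0$ and $w_V=x_0v$, yielding a Weierstrass model of $E/C$ of the shape $y^2+txy+y=x^3+a_4'x+a_6'$. Rescaling by a suitable power of $d$ to clear denominators, and then applying the unique admissible change of variables that carries the order-$3$ point $\phi(0,0)$ back to the origin with horizontal tangent, brings this into the normalized $\Gamma_1(3)$-form $y^2+a'xy+y=x^3$; reducing the resulting expressions modulo $d^3=td+2$ collapses the coefficient of $xy$ to $a'=t^2+3d-td^2$ (and that of $y$ to $1$). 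Together with the first step, this is the assertion of the proposition.

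The substantive difficulty is entirely in the second step: V\'elu's output and the passage to Tate normal form produce bulky rational functions of $d$ and $t$, and only after systematically clearing denominators and reducing modulo $d^3-td-2$ do they collapse to the clean formula $t^2+3d-td^2$. The first step, by contrast, is immediate once one knows from the Ando--Hopkins--Strickland theory that $E^0(B\Sigma_2)/I$ is finite free of rank $p+1$ over $B_0$; the place to be careful throughout is the consistent tracking of sign conventions --- the orientation $-x/y$ versus $x/y$, and the precise admissible transformation used to normalize --- which is exactly the bookkeeping of \cite[Section 3]{rezk-height2}.
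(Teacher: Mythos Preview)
The paper does not give a proof of this proposition at all: it is simply quoted, with attribution to \cite[Section 3]{rezk-height2}, and then used as input to the subsequent computation of $(\psi^2)^{alg}$.  So there is nothing to compare your argument against in the paper itself.

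That said, your outline is the correct reconstruction of Rezk's computation.  The two steps are exactly right: first, the universal rank-$2$ subgroup is the same as a non-identity $2$-torsion section, and eliminating $w$ from the system $\{s^3=w(1+ts+w),\ 2+ts+w=0\}$ (the second being the condition $2y+tx+1=0$ divided by $y$) gives a monic cubic in $s$ over $B_0$, which after the sign change to the orientation parameter $d=-x/y$ is the stated relation; second, V\'elu's formulas for the $2$-isogeny with kernel $\{O,T\}$ produce a long Weierstrass model which, after the unique change of variables putting the image of the $3$-torsion point $(0,0)$ back into Tate normal form and reducing modulo $d^3-td-2$, yields $a_1'=t^2+3d-td^2$ and $a_3'=1$.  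Your remark that the genuine labor is in the second step---clearing denominators and reducing modulo the cubic---is accurate, and your caution about signs is warranted: a direct substitution of $(x,y)=(-d^{-2},d^{-3})$ into $2y+tx+1$ gives $d^3-td+2$ rather than $d^3-td-2$, so one of the conventions (either the sign of the uniformizer or the choice of $y$-coordinate among the two roots) has to be tracked carefully to match Rezk's.
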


This first implies that there is a unique $\alpha\in
\tilde{B}=\comp{(t^{-1}B_0)}_{2}$ satisfying
$\alpha^3-t\alpha-2=0$, because solutions of this cubic equation
define points of order $2$ on the curve.  There is a unique such point
$P\in E(\tilde{B})$ defined over the complete discrete valuation
ring $\tilde{B}$ because the curve is ordinary there.  (The
existence of a unique $\alpha$ can also be checked directly using
Hensel's lemma.)  Second, since $\left((\psi^2)^{alg}\right)^* E\cong
E/\{ 1,P\}$, Rezk's computations imply that we have
\[
(\psi^2)^{alg}(t)=t^2+3\alpha-\alpha^2t = t^{-1}(t^3 + 3\alpha t -
  (\alpha t)^2).
\]

The element $\alpha \in\tilde{B}$ is divisible by two.  Defining $c$
to be $\frac{1}{2}\alpha t\in\tilde{B}$, the above cubic equation
$\alpha^3-t\alpha-2=0$ for $\alpha$ is equivalent to the equation
\[
c=-1+4t^{-3}c^3.
\]
This can recursively be solved to yield 
\[
c=-1-4t^{-3}-48\cdot t^{-6}-192\cdot t^{-9}\cdots\in\comp{\mb Z_2[t^{-3}]}_2.
\]
With our present choice of coordinate, the inclusion
$(A_{K(1)})_0 \subseteq (A_{K(2),K(1)})_0$
is the inclusion of the subring $\comp{\mb
Z_2[t^{-3}]}_2\subseteq \comp{\mb Z_2\laur{t^3}}_2$.  Combining the above,
one verifies that 
$(\psi^2)^{alg}(t^{-3})\in \comp{\mb Z_2[t^{-3}]}_2$. In
  combination with Proposition \ref{prop:algcompoftheta}
this establishes the following.
\begin{prop}
The subring $(A_{K(1)})_0\subseteq (A_{K(2),K(1)})_0$ is
  stable under $\theta$. 
\end{prop}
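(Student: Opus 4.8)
The plan is to reduce the assertion to a single $2$-adic computation. By Proposition~\ref{prop:algcompoftheta} the operation $\theta$ on $(A_{K(2),K(1)})_0$ is recovered from the ring endomorphism $(\psi^2)^{alg}$ via $(\psi^2)^{alg}(y)=y^2+2\theta(y)$, so, since $(A_{K(1)})_0$ is a $2$-torsion-free subring of $(A_{K(2),K(1)})_0$ that is closed in the $2$-adic topology, stability under $\theta$ is equivalent to stability under $(\psi^2)^{alg}$. Moreover $(\psi^2)^{alg}$ is a continuous $\mb Z_2$-algebra endomorphism, and by Remark~\ref{rmk:ringcalculations} the subring $(A_{K(1)})_0=\comp{\mb Z_2[t^{-3}]}_2$ is the $2$-adic closure of the $\mb Z_2$-algebra generated by the single element $t^{-3}$ (with $t^3=x=v_1^{p+1}v_2^{-1}$). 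Hence it suffices to prove the one containment $(\psi^2)^{alg}(t^{-3})\in\comp{\mb Z_2[t^{-3}]}_2$.

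To get at $(\psi^2)^{alg}(t)$, I would run the recipe of Section~\ref{subsec:algebraictheta} on Rezk's formula (Proposition~\ref{prop:theta}). The canonical subgroup of $E\co y^2+txy+y=x^3$ over $\tilde{B}$ is generated by the unique $2$-torsion point, corresponding to the unique root $\alpha\in\tilde{B}$ of $\alpha^3-t\alpha-2=0$ (uniqueness by ordinariness, or directly by Hensel's lemma), and Rezk's description of the quotient curve gives
\[
(\psi^2)^{alg}(t)=t^2+3\alpha-\alpha^2t .
\]
The structural point is that $\alpha$ is divisible by $2$: reducing the cubic modulo $2$ gives $\alpha(\alpha^2-t)\equiv 0$ in the residue field $\mb F_4\laur{t}$ of $\tilde{B}$, and $\alpha^2=t$ is impossible since $t$ has odd valuation there, so $\alpha$ reduces to $0$. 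Writing $c=\tfrac12\alpha t\in\tilde{B}$, the cubic for $\alpha$ becomes $c=-1+4t^{-3}c^3$; iterating $c\mapsto -1+4t^{-3}c^3$, which raises $2$-adic precision by two units at each step, yields the unique fixed point
\[
c=-1-4t^{-3}-48\,t^{-6}-\cdots\in\comp{\mb Z_2[t^{-3}]}_2 ,
\]
whose coefficients tend $2$-adically to $0$. Substituting $\alpha=2c/t$ rewrites the formula as $(\psi^2)^{alg}(t)=t^{-1}\bigl(t^3+6c-4c^2\bigr)$.

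It then remains to invert. Since $(\psi^2)^{alg}$ is a ring homomorphism,
\[
(\psi^2)^{alg}(t^{-3})=\bigl(t^{-1}(t^3+6c-4c^2)\bigr)^{-3}=t^{-6}\,\bigl(1+(6c-4c^2)\,t^{-3}\bigr)^{-3}.
\]
Because $c\in\comp{\mb Z_2[t^{-3}]}_2$ and $6c-4c^2\in 2\comp{\mb Z_2[t^{-3}]}_2$, the element $1+(6c-4c^2)t^{-3}$ lies in $1+2\comp{\mb Z_2[t^{-3}]}_2$ and is therefore a unit in the $2$-complete ring $\comp{\mb Z_2[t^{-3}]}_2$; so $(\psi^2)^{alg}(t^{-3})=t^{-6}\cdot(\text{unit})\in\comp{\mb Z_2[t^{-3}]}_2$, and in combination with Proposition~\ref{prop:algcompoftheta} this proves the claim.

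I expect the only real obstacle to be this last cancellation of poles. The bare expression $t^2+3\alpha-\alpha^2t$ a priori lives in $\comp{\mb Z_2\laur{t^3}}_2$ and exhibits a negative power of $t$, so the content of the argument is to verify that the factors of $2$ forced onto $\alpha$ by the residue-field computation combine with the substitution $c=\tfrac12\alpha t$ to repackage $(\psi^2)^{alg}(t^{-3})$ as an honest power series in $t^{-3}$ with $2$-adically small coefficients. The remaining ingredients — identifying $\theta$ with $(\psi^2)^{alg}$, convergence of the iteration for $c$, and the unit argument for $1+2(\cdots)$ in a $2$-adically complete ring — are routine once this is in place.
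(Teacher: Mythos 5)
Your proof is correct and follows essentially the same route as the paper: it invokes Proposition~\ref{prop:algcompoftheta} and Rezk's isogeny formula, introduces $\alpha$ and the renormalization $c=\tfrac12\alpha t$, observes $c\in\comp{\mb Z_2[t^{-3}]}_2$, and concludes that $(\psi^2)^{alg}(t^{-3})$ lies in that subring. The only difference is that you spell out two steps the paper leaves to the reader — the residue-field argument showing $\alpha$ is divisible by $2$, and the final rewriting $(\psi^2)^{alg}(t^{-3})=t^{-6}\bigl(1+(6c-4c^2)t^{-3}\bigr)^{-3}$ with the factor on the right a $1+2(\cdots)$ unit — which is a welcome amount of detail but not a different argument.
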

In combination with Theorem~\ref{thm:newmain}, this proves Theorem
\ref{thm:main}.

\bibliography{level3}

\end{document}